\DeclareMathOperator*{\argmin}{argmin}
\DeclareMathOperator{\ksd}{KSD}
\DeclareMathOperator{\wksd}{wKSD}
\DeclareMathOperator{\tr}{tr}
\DeclareMathOperator{\F}{F}
\DeclareMathOperator{\SO}{SO}
\DeclareMathOperator{\GL}{GL}
\DeclareMathOperator{\Or}{O}
\DeclareMathOperator{\Div}{div}
\DeclareMathOperator{\Log}{Log}
\DeclareMathOperator{\vectz}{vec}
\DeclareMathOperator{\KL}{KL}
\theoremstyle{plain}
\newtheorem*{theorem*}{Theorem}
\newtheorem{theorem}{Theorem}[section]
\newtheorem{proposition}{Proposition}[section]
\newtheorem{lemma}{Lemma}[section]
\theoremstyle{remark}
\newtheorem{definition}{Definition}[section]
\newtheorem*{remark}{Remark}
\newtheorem{example}{Example}[section]
\newtheorem{assumption}{Assumption}
\title{Theory and Applications of Kernel Stein Discrepancy on Riemannian Manifolds
}
\author{Xiaoda Qu$^\dagger$ and Baba C. Vemuri$^{\ddag}$
\\$^{\dagger}$Department of Statistics \ \ $^{\ddag}$Department of CISE\\University of Florida}
\begin{document}

\maketitle

\begin{abstract}

Distributional comparison is a fundamental problem in statistical data analysis with numerous applications in a variety of scientific and engineering fields. Numerous methods exist for distributional comparison but kernel Stein's method has gained significant popularity in recent times. In this paper, we first present a novel mathematically rigorous and consistent generalization of the Stein operator to Riemannian manifolds. Then we show that the kernel Stein discrepancy (KSD) defined via this operator is nearly as strong as the KSD in the Euclidean setting in terms of  distinguishing the target distributions from the reference. We investigate the asymptotic properties of the minimum kernel Stein discrepancy estimator (MKSDE), apply it to goodness-of-fit testing, and compare it to the maximum likelihood estimator (MLE) experimentally. We present several examples of our theory applied to commonly encountered Riemannian manifolds in practice namely, the n-sphere, the Grassmann, Stiefel, the manifold of symmetric positive definite matrices and other Riemannian homogeneous spaces. On the aforementioned manifolds, we consider a variety of distributions with intractable normalization constants and derive closed form expressions for the KSD and MKSDE. 
\end{abstract}

\section{Introduction}

Data residing in curved spaces have recently received growing attention in numerous fields of Science and Engineering. To model their underlying curved geometry, it is natural to model the space in which they reside with known manifold geometries for example, (i) the Stiefel manifold, $\mathcal{V}_r(N)$, commonly used to model the space of directional data in field of computer vision  \cite{chakraborty2019statistics,turaga2008statistical}, 
dynamic systems \cite{BRIDGES2001219} and rigid body motion \cite{ley2018applied,oualkacha2012estimation,schulz2015analysis},
(ii) Grassmann manifold $\mathcal{G}_r(N)$ in signal processing \cite{dai2008quantization,mondal2007quantization,AnujS-TSP200}, shape analysis \cite{goodall1999projective,Bauer-JMIV2014,yataka2023grassmann} and image processing \cite{dong2017classification,dong2013clustering,sharma2020image}, 
(iii) covariance matrices are modeled as points on
the manifold of symmetric positive definite (SPD) matrices $\mathcal{P}(N)$ in diffusion magnetic resonance imaging \cite{basser1994mr,Fletcher-Joshi2007,Wang-Vemuri2005TMI,le2001diffusion} and brain computer interfaces \cite{carrara2024geometric,xu2023signature}.
  

However, due to the lack of vector space structure, significant complications arise in the formulation and application of statistical methods to such curved spaces. e.g., the data points lying on manifolds can not be simply summed up, thus the notion of the classical arithmetic mean is not meaningful in general. Among all the challenges, the most significant one is the issue of normalization constant associated with probability distributions defined on the manifold-valued  ($M$-valued) random variables, which arises in distributional comparison, parametric estimation and numerous practical applications. Even the simplest distribution on the simplest curved manifold, e.g., the von Mises Fisher distribution $p(x)\propto \exp(\mu^\top x)$ on a sphere $\mathbb{S}^{d-1}$, has a normalization constant that is intractable. Furthermore, the KL-divergence KL$(p,q):=\mathbb{E}_p[\log\frac{p}{q}]$, which is the most commonly-used loss function in parameter estimation, distributional comparison and neural network training, highly relies on the computation of normalization constant of $p$ and $q$. 

In practice, approximating these constants and their derivative with respect to 
the parameters of the distribution requires the use of numerical methods such as the gradient descent and/or its variants, which has been resorted to by many researchers in statistics, machine learning and robotics literature \cite{gilitschenski2020deep,glover2014tracking, kume2013saddlepoint, kume2018exact}
but at the expense of a high computational cost. It would of course be more desirable to fully avoid computing this intractable constant and simultaneously achieve high accuracy in parameter estimation. In fact, this will be the {\it main objective of this paper}.

\emph{Kernel Stein Discrepancy} (KSD), a normalization free loss function was first introduced by Liu et al. \cite{liu2016kernelized} as a measure of goodness of fit and for model evaluation. KSD measures the difference between distributions by using a combination of the so called Stein's method and the well established reproducing kernels Hilbert space (RKHS) theory.
 KSD has since been extensively researched on, including various aspects within a general framework \cite{ley2017stein, mijoule2018stein, mijoule2021stein,oates2017control}, its characterization scope \cite{gorham2015measuring, gorham2017measuring,qu2022framework,barp2024targeted}, exploration of its asymptotic properties relating to minimization \cite{barp2019minimum, oates2022minimum}, its diverse applications \cite{chwialkowski2016kernel, liu2016kernelized, matsubara2022robust}, optimality of KSD-based goodness-of-fit test \cite{hagrass2025minimaxoptimalgoodnessoffittesting} and generalizations to manifolds \cite{barp2018riemann,xu2021interpretable,le2024diffusion,Qu2024kernel}.
 
\subsection{Context}

At its core, the KSD consists of a RKHS, $\mathcal{H}_\kappa$, defined by a kernel function $\kappa$, known as \emph{Stein's class}. Furthermore, it incorporates a \emph{Stein's operator} $\mathcal{S}_P$, dependent on the candidate distribution $P$, but independent of its normalizing constant. The role of the Stein operator is to map elements from $\mathcal{H}_\kappa$ to real integrable functions. These operators must satisfy \emph{Stein's identity}, given by, $P(\mathcal{S}_P f)=0$ (the integral of $\mathcal{S}_P f$ w.r.t. $P$) for all $f\in \mathcal{H}_\kappa$. This leads us to the Stein pair defined as:
\begin{tcolorbox}[colback=gray!5!white,colframe=gray!75!black,title=Stein pair]
A pair $(\mathcal{S}_P,\mathcal{H}_\kappa)$ consisting of a \emph{Stein operator} $\mathcal{S}_P$ and a \emph{Stein class} $\mathcal{H}_\kappa$ satisfying \emph{Stein's identity}, i.e.,  $P(\mathcal{S}_P f)=0$ for all $f\in\mathcal{H}_\kappa$, is called a \emph{Stein pair}.
\end{tcolorbox}

With these foundational elements in place, the KSD between distributions $P$ and $Q$ can be defined as follows:
\begin{equation}\label{KSDdef}
    \ksd(P,Q):= \sup\big\{  Q(\mathcal{S}_p f): f\in\mathcal{H}_\kappa, \Vert f\Vert_{\mathcal{H}_\kappa}\leq 1 \big\}.
\end{equation}


\subsubsection{KSD on \texorpdfstring{\(\mathbb{R}^d\)}{R\^d}} \label{KSDE} Most of the existing works have focused on the KSD defined on $\mathbb{R}^d$. Let $\mathcal{H}_\kappa$ be a RKHS on $\mathbb{R}^d$ associated with kernel $\kappa$, and $\mathcal{H}^d_\kappa$ be the $d$-fold Cartesian product of $\mathcal{H}_\kappa$, equipped with the inner product
$ \langle \Vec{f},\Vec{g}\rangle_{\mathcal{H}_\kappa^d}=\sum_{l=1}^d\langle f_l,g_l \rangle_{\mathcal{H}_\kappa} $ for $\Vec{f}=(f_1,\dots,f_d) $ and $\Vec{g}$ in $\mathcal{H}^d_\kappa$. Suppose $P$ has a differentiable density $p$ w.r.t. the Lebesgue measure. The the most commonly adopted Stein operator $\mathcal{A}_p$ on $ \mathbb{R}^d$ \cite{barp2019minimum,chwialkowski2016kernel,gorham2017measuring,liu2016kernelized,matsubara2022robust,oates2022minimum} is defined as
\begin{equation}\label{StOE}
    \mathcal{A}_p: \Vec{f} \mapsto \sum_{l=1}^d \left[\frac{\partial f_l}{\partial x^l} + f^l \frac{\partial}{\partial x^l} \log p\right],\quad \Vec{f}\in \mathcal{H}^d_\kappa.
\end{equation}
KSD is then obtained by substituting  (\ref{StOE}) into (\ref{KSDdef}), i.e., 
\begin{equation}\label{KSDdef-Rn}
    \ksd(P,Q):= \sup\big\{  Q(\mathcal{A}_p \Vec{f}): \Vec{f}\in\mathcal{H}^d_\kappa, \Vert \Vec{f}\Vert_{\mathcal{H}^d_\kappa}\leq 1 \big\}.
\end{equation}
Clearly, one can easily see from the definition and Stein's identity that $\ksd(P,Q)\geq 0$ and $\ksd(P,P)=0$. In fact, as demonstrated in \cite{barp2019minimum,chwialkowski2016kernel,gorham2017measuring,liu2016kernelized,matsubara2022robust,oates2022minimum}, if the kernel $\kappa$ is $C_0$-universal, then KSD separates (discriminates)  $P$ from $Q$ with $C^1$ densities, i.e.,  $\ksd(P,Q)=0 \Longrightarrow Q=P$. Furthermore, \cite{barp2024targeted} showed that if $\kappa$ is $C_0$-universal and translation-invariant, then $\ksd$ separates $P$ from all $Q$ (with or without a density). Notably, the computation of $\mathcal{A}_p$ is independent of the normalizing constant of $P$, so is the associated KSD in (\ref{KSDdef-Rn}).

The \emph{minimum kernel Stein discrepancy estimator} (MKSDE) is one of the most important applications of KSD, which was first proposed in \cite{barp2019minimum} and further investigated in \cite{matsubara2022robust,oates2022minimum}. The MKSDE minimizes the KSD between the empirical distribution and a parametrized family $p_\theta$, to acquire an estimate \(p_{\theta^*}\) of the underlying distribution of the samples. The MKSDE converges under mild regularity conditions and thus can serve as a normalization-free alternative to MLE.

\subsubsection{ Existing generalizations to manifold} In contrast to the extensive work on KSD in \(\mathbb{R}^d\), generalizations of KSD to Riemannian manifolds is scarce and the existing generalizations are somewhat restrictive as will be evident from  the following discussion.

In
\cite{barp2018riemann}, Barp et al. adopted the Stein operator $\mathcal{L}_p:f\mapsto \Delta f+ g(\nabla\log p,\nabla f) $ and the Sobolev space as their RKHS on {\it compact manifolds}. It is a significant challenge to identify a closed form kernel for such a large RKHS on a curved manifold. An alternative approach to tackle this challenge, not discussed in \cite{barp2018riemann}, is to restrict a Sobolev-type kernel from $\mathbb{R}^d$ to the manifold, as suggested in \cite[Thm. 5]{fuselier2012scattered}. However, this approach does not apply to most of the commonly-used kernels, e.g., Gaussian kernels, Inverse Multi-Quadric (IMQ) kernels, log-inverse kernels etc.

In \cite{le2024diffusion}, Le et al. also adopted the Stein operator $\mathcal{L}_p:f\mapsto \Delta f+g(\nabla \log p,\nabla f)$ and employed a diffusion-based Stein's method to obtain a bound on the 1-Wasserstein metric on complete Riemannian manifolds. Although the assumption made by this work, i.e., the Bakry-Emery curvature criterion $\text{Ric}-\text{Hess}\log p\geq 2\tau g$ for some $\tau>0$, is rather strong in practice and limits the practical applicability of this method, it nevertheless still generalized the classical diffusion approach in \cite{mackey2016multivariate} from $\mathbb{R}^d$ and broadened the methodological toolkit of Stein's method in the manifold settings.

In \cite{xu2021interpretable}, Xu et al. adopted the same Stein's operator \(\mathcal{A}_p\) in \eqref{StOE}, replacing the coordinate \(x^i\) with the local coordinate chart on the manifold, and applying Stokes's theorem to show Stein's identity. However, there is no global chart on any compact manifold, e.g., sphere $\mathbb{S}^{N-1}$, Stiefel manifold $\mathcal{V}_r(N)$, Grassmann manifold $\mathcal{G}_r(N)$ and many others.
In order for Stein's identity based on the Stokes's theorem to hold in their method, the target density $p$ must vanish outside the singular boundary of the chart on such compact manifolds. 

In \cite{Qu2024kernel}, Qu et al. also adopted \eqref{StOE}, but replaced the vector fields $\frac{\partial}{\partial x^l}$ in \eqref{StOE} with the left invariant vector fields utilizing the Lie groups structure, so as to circumvent the issue of local coordinates on Lie groups. However, there are many manifolds that are widely encountered in applications, including the sphere $\mathbb{S}^{N-1}$, Stiefel manifold $\mathcal{V}_r(N)$, Grassmann manifold $\mathcal{G}_r(N)$ and the manifold of symmetric positice definite matrices $\mathcal{P}(N)$ which do not possess a Lie group structure and will be addressed in this work.

\subsection{Our work and contributions} Our contributions in this work are itemized below. {\it The proofs of all theorems original to this manuscript are provided in appendix \ref{proofs_supp}.}
\begin{itemize}
\item {\it KSD on general Riemannian manifolds:} In \S\ref{Theory}, we propose a novel Stein operator on the general complete Riemannian manifold and study its properties. Unlike the previous works, it leads to a normalization-free KSD that is not only applicable to all complete Riemannian manifold but also independent of the choice of local coordinates. We also demonstrate in Thms. \ref{KSD-Characterization-Compact} and  \ref{KSD-Characterization-Noncompact} respectively that, our KSD achieves significantly stronger separation results at the expense of rather mild conditions being imposed on the kernel that are satisfied even by the most widely used kernels in practice. Compared to past works of \cite{barp2018riemann,xu2021interpretable}, our work noticeably expands the applicability of KSD in machine learning, engineering and other fields. In \S\ref{KSD-Homogeneous}, we show that the KSD can be further simplified on Riemannian homogeneous spaces utilizing isometry structure and killing vector fields (metric preserving vector fields). We will elaborate on these topics subsequently.

\item {\it MKSDE and its applications:} In \S\ref{MKSDE}, we introduce the MKSDE obtained by minimizing our novel KSD and its asymptotic properties. In \S\ref{GoF}, we introduce the composite goodness of fit test, one of the most important application based on MKSDE. These results follow the same outline in our previous work \cite{Qu2024kernel} on Lie groups, but we generalize the theory so that it is applicable to all complete Riemannian manifolds. In \S\ref{experiments}, we present two applications of our KSD to one of the most widely-encountered manifolds in science and engineering namely, the Stiefel manifold $\mathcal{V}_2(3)$. Specifically, the first experiments \ref{MKSDEvsMLE} will address the issue of the normalization constant that arises in MLE and how the estimation obtained using proposed normalization-free KSD yields far more accurate parameter estimates compared to MLE that uses approximations for the normalization constant. The second experient justifies the power of the composite goodness of fit test based on our MKSDE.

\item {\it Explicit closed forms:} The most significant property of our KSD and MKSDE, is that they have closed forms on some of the most widely-encountered manifolds, including Stiefel manifold $\mathcal{V}_r(N)$(including the sphere $\mathbb{S}^{N-1}$ and the rotation group $\text{SO}(N)$ as $\mathcal{V}_1(N)=\mathbb{S}^{N-1}$ and $\mathcal{V}_{N-1}(N)=\text{SO}(N)$), Grassmann manifold $\mathcal{G}_r(N)$ and the manifold of symmetric positive definite matrices $\mathcal{P}(N)$. In \S\ref{Examples}, we will compute the explicit form of our KSD and our MKSDE for the exponential family of distributions on these manifolds, which will  facilitate the usage of our method in practice.
\end{itemize}
\section{Mathematical Background}

In this section, we will introduce several pivotal theorems that will be used subsequently in this work. For the definitions of several relevant concepts used throughout this paper, we refer the readers to the following: differential geometry texts \cite{lee2006riemannian,lee2013smooth,petersen2016riemannian} for  the Riemannian manifold, the Riemannian metric, the Riemannian distance, the vector fields, local curves $[\mathfrak{c}(t)]$, volume measure, divergence operator, Riemannian gradient; \cite[\S A]{steinwart2008support}  for the notion of reproducing kernel Hilbert space (RKHS), kernel function and the Bochner integral.

\paragraph{Notation} For a Riemannian manifold $(M,g)$, we denote by $\rho$ the Riemannian distance function, by $\Omega$ the volume measure, by $\Delta$ the Laplacian operator, by $\Div$ the divergence operator, by $\nabla$ the Riemannian gradient operator. For a smooth vector field $D$ on $M$, let $|D|:=\sqrt{g(D,D)}$ be its pointwise length. The symbol $\mathcal{H}_\kappa$ will represent the reproducing kernel Hilbert space associated with the kernel function $\kappa$. Let $C(M)$ ($C_c(M)$, $C_b(M)$, $C_0(M)$ resp.) be the space of all (compactly supported, bounded, vanishing at infinity resp.) continuous functions on $M$, $C^k(M)$ be the space of all $k$-times continuously differentiable functions on $M$, $C^{(1,1)}(M)$ be the space of all $(1,1)$-times continuous differentiable bivariate function on $M\times M$. All measures considered in this work are defined on Borel algebra $\mathscr{B}(M)$. For measure $\nu$, let $\nu(f)$ denote the integral of $f$ w.r.t. $\nu$, and $L^2(\nu)$ denote the space of square integrable functions w.r.t. $\nu$. Denote by $\tau\ll\nu$ if another measure $\tau$ is absolutely continuous w.r.t. $\nu$, and $\frac{d\tau}{d\nu}$ denote the Radon-Nikodym derivative (R-N derivative). If $\tau\ll\nu$ and $\nu\ll\tau$, then we denote $\tau\sim\nu$. Let $\tau\times\nu$ be the product measure of $\tau$ and $\nu$. Let $\mathcal{P}(M)$ be the space of all probability measures on $M$. We say a sequence of $Q_n\in\mathcal{P}(M)$ converges to $Q\in\mathcal{P}(M)$ weakly if $Q_n(f)\to Q$ for all $f\in C_b(M)$, denoted by $Q_n\Rightarrow Q$.

The first three theorems will play fundamental roles in the construction of our Stein operator in \S\ref{StoM-sec}.
The first theorem \cite{gaffney1954special} generalizes the classical Stokes's theorem, which ensures the Stein's identity holds. 

\begin{theorem}[Divergence theorem] \label{DivThm} Let $M$ be a complete Riemannian manifold. For a locally Lipschitz continuous vector field $D$ on $M$ such that $|D|$ and $\Div D$ are both integrable w.r.t volume measure $\Omega$, the following identity holds: $\int_M \Div D d \Omega= 0.$
\end{theorem}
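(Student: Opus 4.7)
The plan is to reduce the identity on the complete manifold $M$ to the classical divergence theorem on compact domains via an exhaustion, using cutoff functions whose gradients can be made small in a controlled way thanks solely to completeness.

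First I would fix a base point $p_0\in M$, write $r(x)=\rho(x,p_0)$, and define the truncations
\[ \phi_n(x)=\max\bigl(0,\,\min(1,\,2-r(x)/n)\bigr). \]
These are Lipschitz, equal to $1$ on $B(p_0,n)$, vanish off $B(p_0,2n)$, and satisfy $|\nabla\phi_n|\le 1/n$ almost everywhere because $r$ is $1$-Lipschitz. Hopf--Rinow, which applies precisely because $M$ is complete, ensures that $\overline{B(p_0,2n)}$ is compact, so each $\phi_n$ is compactly supported.

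Next, since $\phi_n$ and $D$ are both locally Lipschitz, Rademacher's theorem makes them differentiable $\Omega$-a.e., and the Leibniz rule
\[\Div(\phi_n D)=\phi_n\,\Div D+g(\nabla\phi_n,D)\]
holds a.e. The vector field $\phi_n D$ is compactly supported and locally Lipschitz; applying the classical divergence theorem to it (on a compact regular domain containing its support) gives $\int_M\Div(\phi_n D)\,d\Omega=0$, so
\[ \int_M \phi_n\,\Div D\,d\Omega = -\int_M g(\nabla\phi_n, D)\,d\Omega. \]
I would then let $n\to\infty$. On the left, $|\phi_n\Div D|\le|\Div D|\in L^1(\Omega)$, so dominated convergence gives the limit $\int_M\Div D\,d\Omega$. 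On the right, the integrand is supported in the annulus $B(p_0,2n)\setminus B(p_0,n)$ and pointwise bounded by $|D|/n$, so
\[ \Bigl|\int_M g(\nabla\phi_n,D)\,d\Omega\Bigr|\le \frac{1}{n}\int_M |D|\,d\Omega \to 0, \]
because $|D|$ is integrable. Combining the two limits yields $\int_M\Div D\,d\Omega=0$.

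The main obstacle is justifying the divergence theorem at the regularity of a merely locally Lipschitz vector field rather than a smooth one. The clean way around it is to mollify $D$ in the charts of a locally finite atlas, subordinate to a partition of unity, and check that the mollifications converge uniformly on compact sets while their smooth divergences converge in $L^1_{\mathrm{loc}}$ to the a.e. classical $\Div D$; this rests on the fact that locally Lipschitz functions lie in $W^{1,\infty}_{\mathrm{loc}}$, so the distributional and a.e. classical divergences agree. Once this regularity bridge is in place, the two dominated-convergence limits above close the argument without further difficulty.
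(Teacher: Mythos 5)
The paper does not prove this theorem; it states it as background and cites Gaffney (1954) for it, so there is no in-paper proof to compare against. Your argument is, in essence, Gaffney's original cutoff/exhaustion proof, and it is correct: the Lipschitz truncations $\phi_n$ built from the distance function have gradient bounded by $1/n$, Hopf--Rinow (which is exactly where completeness enters) makes them compactly supported, the Leibniz rule reduces $\int \phi_n \Div D\,d\Omega$ to the flux term $-\int g(\nabla\phi_n,D)\,d\Omega$, and the two hypotheses $\Div D\in L^1$ and $|D|\in L^1$ drive the left side to $\int \Div D\,d\Omega$ and the right side to $0$ respectively by dominated convergence. You also correctly flag the one regularity issue, namely applying the classical divergence theorem to the compactly supported, merely Lipschitz field $\phi_n D$, and the mollification-in-charts argument you sketch (resting on locally Lipschitz $\subset W^{1,\infty}_{\mathrm{loc}}$ so that the a.e.\ pointwise and distributional divergences coincide) is the standard and adequate bridge. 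This matches what one would find in the cited reference and its modern expositions.
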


The second theorem \cite[\S A.5.4]{steinwart2008support} captures the interchangeability between the Bochner integral and a continuous linear functional, which serves as the key to obtaining the closed form of KSD in Thm. \ref{KSDFormThm}.
\paragraph{Bochner Integral} Suppose $P\in\mathcal{P}(M)$, $\mathcal{B}$ is a separable Banach space, and $\phi:M\to\mathcal{B}$ is a Borel measurable $\mathcal{B}$-valued map. A (measurable) step function is a map in the form of $\sum_{i=1}^m \textbf{1}_{A_i} x_i $ for some $x_1,\dots,x_n\in\mathcal{B}$ and $A_1,\dots,A_n\in\mathcal{F}$. For each measurable $\mathcal{B}$-valued map $\phi$, there exists a sequence of measurable step functions $\phi_n$ such that $\Vert \phi_n-\phi\Vert_{\mathcal{B}}\to 0 $ pointwisely. A measurable $\mathcal{B}$-valued map $\phi$ is \emph{Bochner $P$-integrable} if there exists a sequence of step functions $\phi_n=\sum_{i=1}^{m_n} \textbf{1}_{A_{i,n}}x_{i,n}$ such that $\lim_{n\to\infty} P(\Vert \phi_n-\phi\Vert)=0$, then the unique \emph{Bochner integral} of $\phi$ w.r.t. $P$ is defined as $P(\phi)=\lim_{n\to \infty} \sum_{i=1}^{m_n} P(A_{i,n}) x_{i,n} $. A measurable $\mathcal{B}$-valued map $\phi$ is $P$-integrable if and only if $P(\Vert \phi\Vert) <+\infty$. In addition, we have
\begin{theorem}\label{Bochner} Suppose $\phi$ is a $P$-integrable $\mathcal{B}$-valued map and $f$ is a continuous linear functional on $\mathcal{B}$, then $f[P(\phi)]=P[f(X)]$.
\end{theorem}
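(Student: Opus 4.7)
The plan is to prove the identity first on step functions (where it is immediate from the linearity of $f$ and the defining formula of the Bochner integral) and then extend to arbitrary Bochner $P$-integrable $\phi$ by an approximation argument that exploits the continuity of $f$.

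\textbf{Step 1 (Step functions).} For a step function $\psi=\sum_{i=1}^m \mathbf{1}_{A_i} x_i$ the Bochner integral is $P(\psi)=\sum_{i=1}^m P(A_i)x_i$ by definition, and $f\circ\psi=\sum_{i=1}^m \mathbf{1}_{A_i} f(x_i)$ is a real-valued simple function whose ordinary integral is $P(f\circ\psi)=\sum_{i=1}^m P(A_i)f(x_i)$. Linearity of $f$ then yields $f[P(\psi)]=\sum_i P(A_i)f(x_i)=P(f\circ\psi)$, so the identity holds on step functions.

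\textbf{Step 2 (Approximation).} By Bochner integrability, choose step functions $\phi_n=\sum_{i=1}^{m_n}\mathbf{1}_{A_{i,n}} x_{i,n}$ with $P(\|\phi_n-\phi\|_{\mathcal{B}})\to 0$ and $P(\phi)=\lim_n\sum_i P(A_{i,n})x_{i,n}$ in the norm of $\mathcal{B}$. Let $\|f\|_{\mathrm{op}}$ denote the operator norm of the continuous linear functional $f$. Because $|f(x)|\le \|f\|_{\mathrm{op}}\|x\|_{\mathcal{B}}$, the real-valued map $f\circ\phi$ is measurable and $P$-integrable (with $P(|f\circ\phi|)\le \|f\|_{\mathrm{op}}P(\|\phi\|_{\mathcal{B}})<\infty$), and the same bound applied to $\phi_n-\phi$ gives
\begin{equation*}
\bigl|P(f\circ\phi_n)-P(f\circ\phi)\bigr|\le P(|f\circ(\phi_n-\phi)|)\le \|f\|_{\mathrm{op}}\,P(\|\phi_n-\phi\|_{\mathcal{B}})\longrightarrow 0.
\end{equation*}

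\textbf{Step 3 (Passing $f$ through the limit).} Applying Step 1 to each $\phi_n$ gives $f[P(\phi_n)]=P(f\circ\phi_n)$. On the left, $P(\phi_n)=\sum_i P(A_{i,n})x_{i,n}\to P(\phi)$ in $\mathcal{B}$, and continuity of $f$ yields $f[P(\phi_n)]\to f[P(\phi)]$. Combining with Step 2 gives $f[P(\phi)]=\lim_n f[P(\phi_n)]=\lim_n P(f\circ\phi_n)=P(f\circ\phi)$, which is the desired identity.

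The argument is essentially routine; the only subtlety is making sure the approximating sequence used to evaluate $P(\phi)$ via the limit formula is the same sequence used to control $P(f\circ\phi_n)\to P(f\circ\phi)$, which is guaranteed by the single condition $P(\|\phi_n-\phi\|_{\mathcal{B}})\to 0$ from the definition of Bochner integrability.
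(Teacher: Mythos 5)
Your proof is correct and is the standard textbook argument: verify the identity on step functions by linearity of $f$, bound $|P(f\circ\phi_n)-P(f\circ\phi)|$ by $\Vert f\Vert_{\mathrm{op}}\,P(\Vert \phi_n-\phi\Vert_{\mathcal{B}})$, and pass to the limit using continuity of $f$ on the $\mathcal{B}$-valued side. Note that the paper does not prove this theorem itself — it is cited from \cite[\S A.5.4]{steinwart2008support} as a background fact, and the appendix of proofs covers only results original to the manuscript — so there is no in-paper argument to compare against; your derivation coincides with the one in that reference.
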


The third theorem, called Mercer's theorem \cite[\S 4.5]{steinwart2008support}, ensures the spectral decomposition of a kernel $\kappa$, which is pivotal in the depiction of the approximating distribution of the empirical KSDs in Thm. \ref{KSDasymptotic}.

\begin{theorem}[Mercer's theorem]\label{Mercer} Suppose $P\in\mathcal{P}(M)$ and $\kappa\in L^2(P\times P)$. There exists a sequence of positive numbers $\lambda_k$, $k\geq 1$ and a sequence of orthonormal eigen-functions $\phi_k\in L^2(P)$ such that, 
$
\sum_{k=1}^n \lambda_k \phi(x)\phi(y)\to \kappa(x,y) \text{\ \ in\ \ } L^2( P\times P), \text{\ as\ \ } n\to\infty.
$
Then, $\{\lambda_k\}$ are said to be the \emph{eigenvalues} of $\kappa$.
\end{theorem}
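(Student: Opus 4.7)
The plan is to realize $\kappa$ via its associated integral operator on $L^2(P)$, apply the spectral theorem for compact self-adjoint positive operators, and then transfer the operator-level spectral decomposition into an $L^2(P\times P)$-convergent expansion of the scalar kernel via the Hilbert--Schmidt isometry.

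First I would define the integral operator $T_\kappa : L^2(P) \to L^2(P)$ by $(T_\kappa f)(x) = \int_M \kappa(x,y)\,f(y)\,dP(y)$. Because $\kappa \in L^2(P\times P)$, Cauchy--Schwarz together with Fubini's theorem shows that $T_\kappa f \in L^2(P)$ for every $f \in L^2(P)$ and that $T_\kappa$ is a Hilbert--Schmidt operator whose Hilbert--Schmidt norm coincides with $\Vert \kappa\Vert_{L^2(P\times P)}$. In particular $T_\kappa$ is compact. Since $\kappa$ is a symmetric positive semidefinite kernel function (it generates the RKHS $\mathcal{H}_\kappa$), a direct check shows that $T_\kappa$ is self-adjoint and positive on $L^2(P)$.

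Next I would invoke the spectral theorem for compact self-adjoint positive operators: there exists an at most countable orthonormal sequence $\{\phi_k\}_{k\geq 1} \subset L^2(P)$ of eigenfunctions with strictly positive eigenvalues $\lambda_k > 0$ whose closed linear span equals $(\ker T_\kappa)^\perp$, and we may order so that $\lambda_1 \geq \lambda_2 \geq \cdots$. Extending $\{\phi_k\}$ by any orthonormal basis of $\ker T_\kappa$ produces a complete orthonormal basis $\{\psi_k\}$ of $L^2(P)$, and the tensor products $\{\psi_i(x)\psi_j(y)\}_{i,j}$ form an orthonormal basis of $L^2(P\times P)$ under the standard identification. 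The matrix coefficients of $\kappa$ in this basis satisfy
\[
\iint \kappa(x,y)\,\psi_i(x)\psi_j(y)\,dP(x)\,dP(y) \;=\; \langle T_\kappa \psi_j,\psi_i\rangle_{L^2(P)} \;=\; \lambda_i\,\delta_{ij}
\]
on the positive-eigenvalue eigenfunctions and vanish otherwise. Parseval's identity in $L^2(P\times P)$ then yields simultaneously $\Vert \kappa\Vert_{L^2(P\times P)}^2 = \sum_k \lambda_k^2$ and the desired convergence $\sum_{k=1}^n \lambda_k \phi_k(x)\phi_k(y) \to \kappa(x,y)$ in $L^2(P\times P)$.

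The main obstacle is the bookkeeping between the operator and kernel pictures: the spectral theorem immediately yields operator-norm convergence of the finite-rank approximations $\sum_{k=1}^n \lambda_k\,\phi_k \otimes \phi_k$ to $T_\kappa$, but promoting this to $L^2(P\times P)$ convergence of the underlying kernel requires the stronger Hilbert--Schmidt framework, which is precisely what supplies the summability $\sum_k \lambda_k^2 < \infty$ and forces the tail of the expansion to vanish in $L^2$. Everything else amounts to verifying that the hypothesis $\kappa \in L^2(P\times P)$ is enough to place $T_\kappa$ in the Hilbert--Schmidt class so that this machinery can be deployed.
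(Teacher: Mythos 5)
The paper does not prove this statement---it is cited directly from \cite[\S 4.5]{steinwart2008support} as a background result---so there is no internal proof to compare against. Your argument (realize $\kappa$ as the Hilbert--Schmidt integral operator $T_\kappa$ on $L^2(P)$, invoke the spectral theorem for compact self-adjoint operators, and transfer the resulting eigendecomposition back to an $L^2(P\times P)$ expansion of the kernel via Parseval and the Hilbert--Schmidt isometry) is the standard proof of this $L^2$ version of Mercer's theorem and is correct in outline. One step is quicker than it looks: passing from pointwise positive semi-definiteness of $\kappa$ to positivity of $T_\kappa$ on $L^2(P)$ is not quite a ``direct check'' under the hypothesis $\kappa\in L^2(P\times P)$ alone---the usual routes (a Bochner integral of the feature map $x\mapsto\kappa_x$, or approximating the quadratic form by Gram matrices built from conditional samples on a refining partition) require some mild additional integrability of the diagonal $\kappa(x,x)$, which is satisfied in every place the paper applies the theorem (e.g.\ Thm.~\ref{KSDasymptotic} explicitly assumes $\kappa^w_p(x,x)$ is $\Lambda$-integrable) but is not formally implied by the stated hypothesis. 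You are also right to observe that positivity of the $\lambda_k$ hinges on $\kappa$ being a reproducing kernel, an assumption implicit in the paper's usage but omitted from the theorem's literal statement.
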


The fourth theorem \cite[Lem. 4.34]{steinwart2008support} demonstrates the connection between the differentiability of the kernel and the differentiability of the functions in its associated RKHS, which ensures that all the functions in the RKHS are differentiable so that the Stein operator is applicable.

\begin{theorem}
\label{kernel-differentiable}
If $\kappa\in C^{(1,1)}(M)$, then $\mathcal{H}_\kappa\subset C^1(M)$, and for a tangent vector $D\in T_{x_0} M$, we have $(D \kappa)_{x_0}\in \mathcal{H}_\kappa$ and $D f(x_0) = \langle f, (D\kappa)_{x_0}\rangle_{\mathcal{H}_\kappa}$. Here $(D \kappa)_{x_0}$ represents the function obtained by letting $D$ act on the first argument of $\kappa$ and fix the first argument at $x_0$.
\end{theorem}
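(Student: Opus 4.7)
The plan is to exploit the reproducing property of $\mathcal{H}_\kappa$ together with the $C^{(1,1)}$ smoothness of the kernel. Fix $D\in T_{x_0}M$, pick a local curve $\mathfrak{c}(t)$ with $\mathfrak{c}(0)=x_0$ and $\mathfrak{c}'(0)=D$, and introduce the difference quotients
\[
    h_t \;:=\; \frac{\kappa(\cdot,\mathfrak{c}(t))-\kappa(\cdot,x_0)}{t}\;\in\;\mathcal{H}_\kappa .
\]
The heart of the proof is to show that $\{h_t\}$ is Cauchy in $\mathcal{H}_\kappa$ as $t\to 0$ and to identify its limit with $(D\kappa)_{x_0}$.

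Expanding $\|h_t-h_s\|_{\mathcal{H}_\kappa}^2$ via the reproducing identity $\langle\kappa(\cdot,a),\kappa(\cdot,b)\rangle_{\mathcal{H}_\kappa}=\kappa(a,b)$ reduces it to a sum of second-order finite differences of $\kappa$ on pairs drawn from $\{x_0,\mathfrak{c}(t),\mathfrak{c}(s)\}$. Working in a local chart around $x_0$ and using the $C^{(1,1)}$ assumption, which guarantees that the mixed partial $D_2 D_1 \kappa$ (where $D_i$ denotes $D$ applied to the $i$-th argument) exists and is continuous, a pair of mean value theorem applications sends every such term to $(D_2D_1\kappa)(x_0,x_0)$ as $s,t\to 0$. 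Hence $h_t\to\phi$ for some $\phi\in\mathcal{H}_\kappa$. By the reproducing property $\langle f, h_t\rangle_{\mathcal{H}_\kappa}=[f(\mathfrak{c}(t))-f(x_0)]/t$, so letting $t\to 0$ yields $Df(x_0)=\langle f,\phi\rangle_{\mathcal{H}_\kappa}$ for every $f\in\mathcal{H}_\kappa$. Specialising to $f=\kappa(\cdot,y)$ identifies $\phi(y)$ with the directional derivative of $\kappa(y,\cdot)$ along $D$ at $x_0$, so $\phi=(D\kappa)_{x_0}\in\mathcal{H}_\kappa$.

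To promote this to the global statement $\mathcal{H}_\kappa\subset C^1(M)$, I would argue that for every smooth vector field $D$ on $M$ the map $x\mapsto (D\kappa)_x\in\mathcal{H}_\kappa$ is continuous: another short expansion of $\|(D\kappa)_x-(D\kappa)_{x'}\|_{\mathcal{H}_\kappa}^2$ via the reproducing formula reduces continuity to the continuity of $D_2D_1\kappa$, which is exactly the $C^{(1,1)}$ hypothesis. Cauchy--Schwarz applied to the identity $Df(x)=\langle f,(D\kappa)_x\rangle_{\mathcal{H}_\kappa}$ then delivers continuity of $Df$ for every $f\in\mathcal{H}_\kappa$, while continuity of $f$ itself follows from the standard Cauchy--Schwarz estimate on the reproducing property $f(x)=\langle f,\kappa(\cdot,x)\rangle_{\mathcal{H}_\kappa}$.

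The main difficulty is the Cauchy bound on $\|h_t-h_s\|_{\mathcal{H}_\kappa}^2$: the four-point second difference of $\kappa$ must be regrouped so that only the mixed partial $D_2D_1\kappa$ appears, which is what allows the $C^{(1,1)}$ regularity (rather than a full $C^2$ hypothesis) to drive the estimate to zero uniformly. Once this bookkeeping is done carefully in a chart, the remainder of the theorem is a routine consequence of the reproducing property and continuity of the inner product.
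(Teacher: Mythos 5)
Theorem~\ref{kernel-differentiable} is not proved in the paper: the authors cite it directly from Steinwart and Christmann (their Lemma~4.34), so there is no in-house argument to compare against. Your blind proof is correct and reproduces essentially the standard argument from that reference, adapted to a manifold by passing to a local curve $\mathfrak{c}(t)$ and a chart near $x_0$: form the difference quotients $h_t=[\kappa(\cdot,\mathfrak{c}(t))-\kappa(\cdot,x_0)]/t$, expand $\Vert h_t-h_s\Vert^2_{\mathcal{H}_\kappa}$ by the reproducing identity so that only mixed second differences of $\kappa$ appear, invoke the continuity of $D_2D_1\kappa$ (the content of the $C^{(1,1)}$ hypothesis) via two applications of the mean value theorem to get the Cauchy property, and identify the limit as $(D\kappa)_{x_0}$. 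Your global upgrade to $\mathcal{H}_\kappa\subset C^1(M)$ via continuity of $x\mapsto (D\kappa)_x$ in $\mathcal{H}_\kappa$ and Cauchy--Schwarz is also the standard closing move. The only point worth tightening in the write-up is the sentence ``letting $t\to 0$ yields $Df(x_0)=\langle f,\phi\rangle$'': the existence of the directional derivative of $f$ is a conclusion of the limit $\langle f,h_t\rangle\to\langle f,\phi\rangle$, not an input, and the passage from all-directional-derivatives-exist-and-are-continuous to $f\in C^1$ deserves to be stated explicitly, but neither affects correctness.
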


Suppose a kernel $\kappa$ on $M$ is bounded, then $\varphi: P\mapsto \varphi_P(x):=\int \kappa(x,y) dP(y)\in\mathcal{H}_\kappa$ is a $\mathcal{H}_\kappa$-valued map from $\mathcal{P}(M)$ to $\mathcal{H}_\kappa$, namely, the \emph{kernel mean embedding} of $P$. A bounded kernel is said to be \emph{characteristic} if $\varphi$ is injective on $\mathcal{P}(M)$. Furthermore, $\kappa$ is said to be $C_0$-\emph{universal} if $\mathcal{H}_\kappa$ is a dense subspace of $C_0(M)$.

The characteristic kernels and $C_0$-universal kernels are two types of important kernels that satisfy our mathematical requirements and will be extensively used in \S\ref{KSD-Separability}. In practice, many commonly-used translation-invariant kernels on $\mathbb{R}^d$ fall into this category (e.g., Gaussian, Inverse Multi-Quadric (IMQ), Matérn, B-spline, Cauchy, sech, Wendland compact-support, spectral mixture and others). A kernel $\kappa$ on $\mathbb{R}^d$ is said to be \emph{translation-invariant} if $\kappa(x+z,y+z) = \kappa(x,y)$ for any $x,y,z\in\mathbb{R}^d$. Specifically, we will use following two most widely-used classes of kernels to explicitly calculate some examples in \S\ref{Examples}.

\begin{example}\label{Radial-Ex}
Following kernels will be used in \S \ref{Examples}:
\begin{itemize}
    \item Gaussian kernels: $
    \kappa(x,y)=\exp(-\frac{\tau}{2}\Vert x-y\Vert_{\mathbb{R}^d}^2)$ for some $\tau>0$.
    \item IMQ kernels: $
    \kappa(x,y)=(\beta+\Vert x-y\Vert_{\mathbb{R}^d}^2)^{-\gamma}$ for some $\beta,\gamma>0$.
\end{itemize}
These two classes of kernels are characteristic and $C_0$-universal. Notably, they all belong to the class of \emph{radial kernel}, i.e., there exists some $\psi\in C^2[0,+\infty)$ such that $\kappa(x,y)=e^{-\psi(\Vert x-y\Vert^2_{\mathbb{R}^d})}$. This property will significantly simplify the analytical derivation in \S\ref{Examples}.
\end{example}
\section{KSD on Riemannian Manifolds}\label{Theory}

\subsection{Stein operator on Riemannian Manifolds}\label{StoM-sec}

First we seek to generalize the Stein pair $(\mathcal{A}_p,\mathcal{H}^d_\kappa)$ on $\mathbb{R}^d$ defined in Eq. (\ref{StOE}) to Riemannian manifolds and then use it to develop the KSD. It is straightforward to see that following assumptions must be maintained on manifolds.

\begin{assumption}\label{assumption-M}
The Riemannian manifold $M$ is complete and connected. 
\end{assumption}

The completeness of the manifold ensures that the generalized divergence theorem (Thm. \ref{DivThm}) holds, by which we establish Stein's identity for our Stein pair in \S\ref{KSDonM}. The connectedness of the manifold will be used to establish the separation results of the KSD in \S\ref{KSD-Separability}.

\begin{assumption}
The target distribution $P$ has a locally Lipschitz continuous density $p>0$ w.r.t the volume measure $\Omega$.
\end{assumption}

The local Lipschitz continuity is to ensure that $p$ is almost everywhere differentiable by Rademacher theorem \cite[Thm. 3.1.6]{federer2014geometric}. Rather than assuming $p\in C^1(M)$, we accommodate densities that are just Lipschitz continuous, which naturally arise in intrinsic settings on the manifold, e.g., the intrinsic Gaussian distribution $p\propto \exp(-\frac{\rho(x,\Bar{x})^2}{2\sigma^2})$, $\Bar{x}\in M$.

\begin{assumption}
The kernel function $\kappa\in C^{(1,1)}(M)$.
\end{assumption}

This assumption ensures that all functions in its associated RKHS $\mathcal{H}_\kappa$ are continuously differentiable by Thm. \ref{kernel-differentiable}.

\vspace{5mm}

As previously stated, there is no global chart on a curved manifold, thus the partial derivatives $\{\frac{\partial}{\partial x^l}\}$ do not generalize to Riemannian manifolds globally. To have corresponding global derivatives, we should resort to the vector fields $D^l$ on manifolds instead, in which case the resulting Stein operator maps $\Vec{f}:=(f_1,\dots,f_m)\in\mathcal{H}^m_\kappa$ to $\sum_{l=1}^m \left[ D^l f_l+ f_l D^l\log p \right]$. However, one should note that the fact,   $\Div\frac{\partial}{\partial x^l}=0$ plays an important role in the Stein operator $\mathcal{A}_P$, since 
\begin{equation}\label{Derivation-E}
\begin{aligned}
\sum_{l=1}^d\Div(f_l p\frac{\partial}{\partial x^l} )
&=p \sum_{l=1}^d\left(\frac{\partial f_l}{\partial x^l}+f_l\frac{\partial}{\partial x^l}\log p+f_l \Div\frac{\partial}{\partial x^l}
\right) \\
&= p \mathcal{A}_p \Vec{f}+ p  \sum_{l=1}^d f_l\Div\frac{\partial}{\partial x^l}= p \mathcal{A}_P \Vec{f}, 
\end{aligned}
\end{equation}
The first equality is based on the property \cite[Exer. 2.5.5]{petersen2016riemannian} of the divergence operator $\Div(fD)=Df+f\Div D$. This equation leads to Stein's identity for $\mathcal{A}_p$ as 
\[ P(\mathcal{A}_p \Vec{f})=\int_{\mathbb{R}^d} p\mathcal{A}_p \Vec{f} dx= \sum_{l=1}^d\int_{\mathbb{R}^d} \Div(f_l p \frac{\partial}{\partial x^l})d x=0,\]
by divergence theorem (Thm. \ref{DivThm}). 

\emph{The above derivation process falls apart on manifolds, since $\Div D^l\neq 0$ in general and thus the last equality in Eq.\eqref{Derivation-E} fails to hold}. To preserve the Stein's identity on manifolds, we keep the term in Eq.\eqref{Derivation-E} that contains $\Div D^l$, and define the operator as:
\begin{tcolorbox}[colback=gray!5!white,colframe=gray!75!black,title=Stein's operator on $M$]

\begin{definition}[Stein operator on $M$] Given a group of vector fields $\{D^l\}_{l=1}^m$ on $M$, the \emph{Stein operator} $\mathcal{T}_p$ on $M$ is defined as
\begin{equation}\label{StOM}
\mathcal{T}_p:  \Vec{f}\mapsto \sum_{l=1}^m \left[ D^l f_l+ f_l D^l\log p + f_l \Div D^l \right], \quad \Vec{f}\in \mathcal{H}^m_\kappa. 
\end{equation}
Here, $D^l \log p$ is set to $0$ whenever $p$ is non-differentiable.
\end{definition}
\end{tcolorbox}

\begin{remark}  In contrast to $\mathbb{R}^d$, the number of vector fields $m$ here is not compelled to equal the dimension $d:=\dim M$ of $M$, as long as Stein's identity holds. In fact, we will usually need more vector fields than in the case of Euclidean space for manifolds, to preserve the properties that the Stein operator on Euclidean space possesses, as we will elaborate in \S\ref{KSD-Separability}.
\end{remark}

\begin{example}[Euclidean space] For the case $M=\mathbb{R}^d$, let $D^l=\frac{\partial}{\partial x^l}$, $1\leq l\leq d$. Since $\Div\frac{\partial}{\partial x^l}=0$, then Stein operator $\mathcal{T}_p$ on manifold \eqref{StOM} degenerate to the Stein operator $\mathcal{A}_p$ on Euclidean space \eqref{StOE}.
\end{example}

\begin{example}[Lie groups] The case where $M$ is a Lie group was presented in \cite{Qu2024kernel}. Let $D^l$, $1\leq l\leq d$ be the left-invariant vector fields on $G$, then $\Div D^l=D^l\Delta$, where $\Delta$ is the modular function. For illustrative examples, we refer the readers to our recent work \cite{Qu2024kernel}.   
\end{example}

\subsection{KSD on Riemannian Manifolds}\label{KSDonM}

With the Stein operator $\mathcal{T}_p$ in hand, we can now define the KSD on $M$ as follows:

\begin{definition}[KSD on Riemannian manifolds] Given the Stein operator $\mathcal{T}_p f$ in \eqref{StOM}, the \emph{kernel Stein discrepancy (KSD)} on $M$ is defined by plugging $\mathcal{T}_p$ into \eqref{KSDdef}, i.e., 
\begin{equation}\label{KSDdef-M}
    \ksd(P,Q):= \sup\big\{  Q(\mathcal{T}_p \Vec{f}): \Vec{f}\in\mathcal{H}^m_\kappa, \Vert \Vec{f}\Vert_{\mathcal{H}^m_\kappa}\leq 1 \big\}.
\end{equation}
\end{definition}

A remarkable property of KSD on $\mathbb{R}^d$ \cite{chwialkowski2016kernel,liu2016kernelized}, as well as the KSD on Lie groups \cite{Qu2024kernel} is that it has a closed form represented by an integral, which facilitates its use in the practical applications. 
The KSD on Riemannian manifold defined in \eqref{KSDdef-M} preserves this property, as we derive next. 

For notational convenience, we specify each component $\mathcal{T}^l_p$ of the Stein operator $\mathcal{T}_p$ as $\mathcal{T}^l_p: h\mapsto  D^l h+h D^l\log p+h \Div D^l$ for $h\in \mathcal{H}_\kappa$.
Thus, $\mathcal{T}_p \Vec{f}=\sum_{l=1}^m \mathcal{T}^l_p f_l$, for $\Vec{f}\in\mathcal{H}^m_\kappa$. We let $\mathcal{T}^l_p \kappa$ denote the bivariate function obtained by letting $\mathcal{T}^l_p$ act on the first argument of $\kappa$, let $(\mathcal{T}^l_p \kappa)_x$ represent the univariate function obtained by fixing the first argument of $\mathcal{T}^l_p \kappa$ at $x$, and let $(\Vec{\mathcal{T}}_p \kappa)_x$ represent the vector-valued function $((\mathcal{T}^1_p \kappa)_x,\dots,(\mathcal{T}^m_p \kappa)_x)$. 

By Thm. \ref{kernel-differentiable}, it is straightforward that $(\mathcal{T}^l_p\kappa)_x\in\mathcal{H}_\kappa$ and thus $(\Vec{\mathcal{T}}_p\kappa)_x\in\mathcal{H}^m_\kappa$ for all $x$. Therefore, $\phi^l_p:x\mapsto (\mathcal{T}^l_p\kappa)_x$ ($\Vec{\phi}_p:x\mapsto (\Vec{\mathcal{T}}_p\kappa)_x$ resp.) is a measurable map from $M$ to $\mathcal{H}_k$ ($\mathcal{H}^m_\kappa$ resp.). Furthermore, Thm. \ref{kernel-differentiable} implies that $\langle h,\phi^l_p(x)\rangle_{\mathcal{H}_\kappa}=\mathcal{T}^l_p h (x)$ for all $h\in\mathcal{H}_\kappa$, thus if we substitute the function $\phi^l_p(y)$ for $h$, we will have
\begin{equation}\label{kp}
\begin{aligned}
\kappa_p(x,y)&:=\langle \Vec{\phi}_p(x), \Vec{\phi}_p(y)) \rangle_{\mathcal{H}^m_\kappa}=\sum_{l=1}^m \langle \phi^l_p(y) , \phi^l_p(x) \rangle_{\mathcal{H}_\kappa}=\sum_{l=1}^m \mathcal{T}^l_{p(x)} \mathcal{T}^l_{p(y)} \kappa.  
\end{aligned}
\end{equation}
Here $\mathcal{T}^l_{p(x)} $ and $\mathcal{T}^l_{p(y)} $ represent the operators acting on the first argument $x$ of $\kappa$ and the second argument $y$ of $\kappa$ respectively, and $\mathcal{T}^l_{p(x)} \mathcal{T}^l_{p(y)} \kappa$ represent the bivariate function by letting $\mathcal{T}^l_{p(y)} $ act on $y$ first and then let $\mathcal{T}^l_{p(x)} $ act on $x$ of $\kappa$. Clearly, $\kappa_p(x,y)$ is symmetric and semi-positive definite.

Note that $\kappa_p(x,x)=\langle \Vec{\phi}_p (x), \Vec{\phi}_p(x) \rangle= \Vert \Vec{\phi}_p (x) \Vert^2$, thus if $\sqrt{\kappa_p(x,x)}$ is $Q$-integrable, then $\Vec{\phi}_p (x) $ is Bochner $Q$-integrable, whose Bochner integral is denoted by $Q(\Vec{\phi}_p)$. By Thm. \ref{Bochner},
 \begin{equation}
 \begin{aligned}
\ksd^2(P,Q)&= \sup_{\Vert \Vec{f}\Vert\leq 1} Q(\mathcal{T}_p \Vec{f})^2=\sup_{\Vert \Vec{f}\Vert\leq 1} Q[\langle \Vec{f},\Vec{\phi}_p\rangle]^2=\sup_{\Vert \Vec{f}\Vert\leq 1}\langle \Vec{f},Q(\Vec{\phi}_p)\rangle^2 =\Vert  Q(\Vec{\phi}_p)\Vert^2\\
&= \langle Q[\Vec{\phi}_p], Q[\Vec{\phi}_p]\rangle = (Q\times Q) [\langle \Vec{\phi}_p(\cdot), \Vec{\phi}_p(\cdot)\rangle] = \iint \kappa_p(x,y) Q(dx) Q(dy).
 \end{aligned}
\end{equation}
We summarize this result into following theorem:
\begin{theorem}[Closed form]\label{KSDFormThm}
Suppose $\sqrt{\kappa_p(x,x)}$ is $Q$-integrable, then the KSD on $M$ defined in Eq. (\ref{KSDdef-M}) satisfies
\begin{equation}\label{KSDForm}
\ksd^2(P,Q)=\iint \kappa_p(x,y) Q(dx) Q(dy). 
\end{equation}
Here $\kappa_p(x,y)$ is the function introduced in \eqref{kp}.
\end{theorem}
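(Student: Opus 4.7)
The plan is to realize the supremum in \eqref{KSDdef-M} as the Hilbert-space norm of a single element of $\mathcal{H}_\kappa^m$, obtained as a Bochner integral of a canonical feature map, and then to re-express that norm as a double integral against $\kappa_p$. The guiding intuition is that under the hypothesis on $\sqrt{\kappa_p(x,x)}$, the linear functional $\vec f\mapsto Q(\mathcal T_p\vec f)$ on $\mathcal H_\kappa^m$ is bounded and its representer is the Bochner integral $Q(\vec\phi_p)$, where $\vec\phi_p(x)$ is the $\mathcal H_\kappa^m$-valued feature map with components $\phi_p^l(x):=(\mathcal T_p^l\kappa)_x$.

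First I would use Theorem \ref{kernel-differentiable} to rewrite each component of the Stein operator as an RKHS inner product. Since $\mathcal T_p^l h = D^l h + h\,[D^l\log p + \Div D^l]$, combining the directional-derivative reproducing identity of Theorem \ref{kernel-differentiable} with the standard reproducing property gives $\mathcal T_p^l h(x) = \langle h, \phi_p^l(x)\rangle_{\mathcal H_\kappa}$. Stacking the $m$ components yields $\mathcal T_p \vec f(x) = \langle \vec f, \vec\phi_p(x)\rangle_{\mathcal H_\kappa^m}$, and by definition \eqref{kp} one has $\|\vec\phi_p(x)\|_{\mathcal H_\kappa^m}^2 = \kappa_p(x,x)$. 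The hypothesis is then precisely the $Q$-integrability of $\|\vec\phi_p(\cdot)\|_{\mathcal H_\kappa^m}$, so $\vec\phi_p$ is Bochner $Q$-integrable and $Q(\vec\phi_p)\in\mathcal H_\kappa^m$ is well-defined.

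Next I would apply Theorem \ref{Bochner} to the continuous linear functional $u\mapsto\langle \vec f, u\rangle_{\mathcal H_\kappa^m}$ to interchange integration with the inner product: $Q(\mathcal T_p\vec f) = Q(\langle \vec f, \vec\phi_p\rangle) = \langle \vec f, Q(\vec\phi_p)\rangle$. Taking the supremum over $\|\vec f\|\le 1$ and invoking Cauchy-Schwarz, saturated by $\vec f = Q(\vec\phi_p)/\|Q(\vec\phi_p)\|$ when the denominator is nonzero, yields $\ksd(P,Q) = \|Q(\vec\phi_p)\|_{\mathcal H_\kappa^m}$. To turn this norm into the claimed double integral I would apply Theorem \ref{Bochner} once more, to the functional $u\mapsto\langle u, Q(\vec\phi_p)\rangle$, reducing $\|Q(\vec\phi_p)\|^2$ to $Q(\langle \vec\phi_p(\cdot), Q(\vec\phi_p)\rangle)$, and then once more inside to push the remaining inner product through, arriving at $(Q\times Q)[\kappa_p(\cdot,\cdot)]$.

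The only real subtlety is checking that the single integrability hypothesis suffices to justify each interchange. The key is pointwise Cauchy-Schwarz, $|\kappa_p(x,y)|\le\sqrt{\kappa_p(x,x)\,\kappa_p(y,y)}$, which together with Fubini on $Q\times Q$ renders the double integral absolutely convergent and keeps Bochner integrability in force at each stage of the argument. Beyond that, everything is routine once the feature-map identification $\mathcal T_p\vec f(x) = \langle \vec f,\vec\phi_p(x)\rangle$ is in place.
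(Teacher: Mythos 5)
Your proposal is correct and follows essentially the same route as the paper: identify the feature map $\vec\phi_p(x)=(\vec{\mathcal T}_p\kappa)_x$ via Theorem~\ref{kernel-differentiable}, use the hypothesis to get Bochner $Q$-integrability, interchange with the inner-product functional via Theorem~\ref{Bochner} to obtain $\ksd(P,Q)=\|Q(\vec\phi_p)\|_{\mathcal H_\kappa^m}$, and unwind the squared norm as a double integral against $\kappa_p$. The only difference is cosmetic — you spell out the repeated applications of Theorem~\ref{Bochner} and the pointwise Cauchy--Schwarz bound that the paper leaves implicit.
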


\begin{theorem}[Stein's identity]\label{Stein-identity} If $\sqrt{\kappa(x,x)}\sum_{l=1}^m|D^l|$,  $\sqrt{\kappa_p(x,x)}$ are $P$-integrable, then $P(\mathcal{T}_p\Vec{f})=0$ for all $\Vec{f}\in\mathcal{H}^m_\kappa$, i.e., $\ksd(P,P)=0$ or equivalently $Q=P\Rightarrow \ksd(P,Q)=0$.
\end{theorem}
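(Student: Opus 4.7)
The strategy is to recognize $p \cdot \mathcal{T}_p \vec{f}$ as a sum of divergences and invoke Gaffney's divergence theorem (Thm.~\ref{DivThm}) on the complete manifold $M$. For each $1 \le l \le m$, set $V_l := f_l\, p\, D^l$. The Leibniz rule $\Div(h D) = Dh + h\,\Div D$ together with $D^l p = p\, D^l \log p$ gives
\[
\Div V_l \;=\; D^l(f_l p) + f_l p \Div D^l \;=\; p\bigl(D^l f_l + f_l D^l\log p + f_l \Div D^l\bigr) \;=\; p \, \mathcal{T}^l_p f_l,
\]
so $\sum_{l=1}^m \Div V_l = p\, \mathcal{T}_p \vec{f}$. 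Once Thm.~\ref{DivThm} applies to each $V_l$ individually, we conclude $P(\mathcal{T}_p\vec{f}) = \int_M p\, \mathcal{T}_p\vec{f}\, d\Omega = \sum_l \int_M \Div V_l\, d\Omega = 0$.

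The remaining work is to verify, for each $l$, that $V_l$ is locally Lipschitz continuous and that both $|V_l|$ and $\Div V_l$ are $\Omega$-integrable. Local Lipschitz continuity follows because $f_l \in \mathcal{H}_\kappa \subset C^1(M)$ by Thm.~\ref{kernel-differentiable}, $p$ is locally Lipschitz by Assumption~2, and the $D^l$ are assumed to be smooth vector fields. For the norm bound, the reproducing property and Cauchy--Schwarz yield $|f_l(x)| \le \|f_l\|_{\mathcal{H}_\kappa}\sqrt{\kappa(x,x)}$, hence
\[
\int_M |V_l|\, d\Omega \;\le\; \|f_l\|_{\mathcal{H}_\kappa}\, P\!\left(\sqrt{\kappa(x,x)}\,|D^l|\right) \;\le\; \|f_l\|_{\mathcal{H}_\kappa}\, P\!\left(\sqrt{\kappa(x,x)}\sum\nolimits_{l'} |D^{l'}|\right) < \infty
\]
by the first hypothesis. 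For the divergence bound, since $\mathcal{T}^l_p f_l(x) = \langle f_l, \phi^l_p(x)\rangle_{\mathcal{H}_\kappa}$ by the construction in Eq.~\eqref{kp}, Cauchy--Schwarz gives $|\mathcal{T}^l_p f_l(x)| \le \|f_l\|_{\mathcal{H}_\kappa}\, \|\phi^l_p(x)\|_{\mathcal{H}_\kappa}$; combined with the pointwise domination $\|\phi^l_p(x)\|^2_{\mathcal{H}_\kappa} \le \sum_{l'} \|\phi^{l'}_p(x)\|^2_{\mathcal{H}_\kappa} = \kappa_p(x,x)$, this yields
\[
\int_M |\Div V_l|\, d\Omega \;=\; \int_M p\,|\mathcal{T}^l_p f_l|\, d\Omega \;\le\; \|f_l\|_{\mathcal{H}_\kappa}\, P\!\left(\sqrt{\kappa_p(x,x)}\right) < \infty
\]
by the second hypothesis.

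The main delicate point is the passage from the aggregate integrability of $\sqrt{\kappa_p(x,x)}$ to the per-component bounds needed to apply the divergence theorem to each $V_l$ separately; this is resolved by the pointwise domination $\|\phi^l_p(x)\|^2_{\mathcal{H}_\kappa} \le \kappa_p(x,x)$ noted above. Once Stein's identity is established, the equivalent reformulation $\ksd(P,P)=0$ follows either by specializing $Q = P$ in the closed-form expression of Thm.~\ref{KSDFormThm}, or directly by taking the supremum in the definition \eqref{KSDdef-M}.
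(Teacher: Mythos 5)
Your proof is correct and uses essentially the same approach as the paper: the Leibniz identity $\Div(p f_l D^l) = p\, \mathcal{T}^l_p f_l$, Cauchy--Schwarz together with the reproducing property to bound $|f_l|$ and $|\mathcal{T}^l_p f_l|$ by $\sqrt{\kappa(x,x)}$ and $\sqrt{\kappa_p(x,x)}$ respectively, and Gaffney's divergence theorem (Thm.~\ref{DivThm}) to conclude. The only variation is that you apply the divergence theorem to each summand $V_l = p f_l D^l$ separately (requiring the per-component domination $\|\phi^l_p(x)\|^2_{\mathcal{H}_\kappa}\le\kappa_p(x,x)$, which you correctly supply), whereas the paper applies it once to the single field $\sum_l p f_l D^l$ and only needs the aggregate bound $|\mathcal{T}_p\vec{f}(x)| \le \|\vec{f}\|\sqrt{\kappa_p(x,x)}$; the per-component step is thus a self-imposed but harmless extra refinement.
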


\subsection{Separability of KSD}\label{KSD-Separability}

The Stein operator $\mathcal{T}_p$ in \eqref{StOM} satisfies Stein's identity, i.e., $P(\mathcal{T}_p \Vec{f})=0$ for $\Vec{f}\in \mathcal{H}^m_\kappa$, which is equivalent to stating that, $Q=P\Rightarrow \ksd(P,Q)=0$. However, in order to use KSD as a loss functions in practice, it should satisfy the reverse implication, i.e., $\ksd(P,Q)=0\Rightarrow Q=P$, in which case we say \emph{the KSD separates (discriminates)  $P$ (from $Q$)}. Moreover, if $\ksd(P,Q_n)\to 0$ implies $Q_n\Rightarrow P$ for a sequence of distribution $Q_n$, then we say $\ksd$ \emph{detects the weak convergence}. However, to ensure that the $\ksd$ separates  $P$, we must impose more conditions on the Stein operator and the Stein class.

\paragraph{Conditions on the Stein operator} If we recall the process of derivation of $\mathcal{A}_p$ on $\mathbb{R}^d$ in the original literature \cite{barp2019minimum,chwialkowski2016kernel,gorham2017measuring,liu2016kernelized,matsubara2022robust,oates2022minimum}, we discover that, intuitively, the component $\mathcal{A}^l_p f(x):=\frac{\partial f_l}{\partial x^l}(x)+f_l(x)\frac{\partial }{\partial x^l}\log p(x)$ of each $l$ in $\mathcal{A}_p \Vec{f}$, detects the difference between the slopes of $\log p$ and $\log q$ along the direction $
\frac{\partial}{\partial x^l} $ at $x$. Therefore, $\{\frac{\partial}{\partial x^l}\}_{l=1}^d$ must be a basis at each point of $\mathbb{R}^d$, so that $\mathcal{A}_p=\sum_{l=1}^d\mathcal{A}^l_p$ can detect the differences along all directions, so that we can conclude $\frac{\partial}{\partial x^l}\log p(x)=\frac{\partial}{\partial x^l}\log q(x) $ for all $1\leq l\leq d$ after some reasoning. In addition, to further conclude $P=Q$, we need the connectedness of $\mathbb{R}^d$. This motivates us to make the following additional assumptions:

\begin{assumption}\label{assumption-D}
For each $x\in M$, $D^1_x,\dots, D^m_x$ span the entire $T_x M$.
\end{assumption}

\begin{theorem}\label{vector-field-basis} Such a collection of vector fields that satisfies the standing assumption \ref{assumption-D} on $M$ always exists.
\end{theorem}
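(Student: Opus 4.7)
The plan is to produce the required vector fields via an embedding into Euclidean space and then project the coordinate vector fields of the ambient space back onto the tangent bundle of $M$. Concretely, the first step is to invoke Whitney's embedding theorem: any smooth manifold of dimension $d$ admits a smooth embedding $\iota \colon M \hookrightarrow \mathbb{R}^{N}$ (e.g.\ with $N=2d+1$). Through the differential $d\iota_x$ we identify each tangent space $T_x M$ with a $d$-dimensional linear subspace of $T_{\iota(x)}\mathbb{R}^{N}\cong \mathbb{R}^{N}$.

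Next, let $e_1,\dots,e_{N}$ be the standard basis of $\mathbb{R}^{N}$ and let $\Pi_x \colon \mathbb{R}^{N}\to T_x M$ denote the orthogonal projection with respect to the Euclidean inner product. Define $D^{l}_x := \Pi_x(e_l)$ for $1\le l\le N$. Because the Gauss map $x\mapsto T_x M$ is smooth along an embedded submanifold, the projector $\Pi_x$ depends smoothly on $x$, and therefore each $D^{l}$ is a smooth (hence locally Lipschitz) vector field on $M$.

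The spanning property is then elementary linear algebra. Any $v \in T_x M$, regarded as an element of $\mathbb{R}^{N}$, admits the Euclidean expansion $v = \sum_{l=1}^{N} (v \cdot e_l)\, e_l$. Applying $\Pi_x$ to both sides and using $\Pi_x v = v$ gives
\begin{equation*}
v = \sum_{l=1}^{N}(v\cdot e_l)\, D^{l}_x,
\end{equation*}
so $\{D^{l}_x\}_{l=1}^{N}$ spans $T_x M$ at every point $x\in M$, which is precisely Assumption~\ref{assumption-D} with $m=N$.

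The only real subtlety is ensuring that the projected fields are globally well-defined and sufficiently regular; this is where Whitney embedding plus smoothness of the Gauss map does the work, and no additional hypothesis on $M$ beyond that already built into Assumption~\ref{assumption-M} (completeness and connectedness are irrelevant here, only second countability and smoothness) is needed. A purely intrinsic alternative would use a partition of unity subordinate to a locally finite atlas, multiplying local coordinate frames by bump functions; this also produces a spanning family, but in general only a countable one, so I would prefer the embedding argument since it delivers a finite $m$, which matches the finite sum in the definition~\eqref{StOM} of $\mathcal{T}_p$.
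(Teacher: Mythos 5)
Your argument is correct and matches the paper's proof essentially verbatim: the paper also invokes the Whitney embedding theorem (into $\mathbb{R}^{2d+1}$) and takes the orthogonal projections of the ambient coordinate vector fields onto $T_x\psi(M)$, pulling them back to $M$. Your version simply spells out the linear-algebra spanning argument and the smoothness of the projector that the paper leaves implicit.
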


\paragraph{Conditions on the Stein class} Most widely-known results established by the past works (e.g., \cite{chwialkowski2016kernel,liu2016kernelized,xu2020stein}) is that, if $\kappa$ is \emph{$C_0$-universal}, then the $\ksd$ will separate $P$ from all $Q$ with $C^1$-densities. It was further strengthened in \cite[Thm. 3]{barp2024targeted} that, if $\kappa$ is a $C_0$-universal translation-invariant kernel on $\mathbb{R}^d$, then the $\ksd$ defined via the Stein operator \eqref{StOE} separates $P$ from all $Q$ such that $Q(|\nabla \log p|)<+\infty$, where $Q$ does not necessarily admit a density. Whereas our goal is to extend this result to the setting of manifold, there is no conception of "translation-invariant kernel" on a non-flat manifold. Nonetheless, we may embed the manifold $M$ into some Euclidean space $\mathbb{R}^{d'}$ ($d'>\dim M$) via a smooth embedding $\psi:M\to\mathbb{R}^{d'}$, and take the restriction $\Tilde{\kappa}(x,y):=\kappa(\psi(x),\psi(y))$ of a kernel $\kappa$ on $\mathbb{R}^{d'}$ to $M$. In practical applications, constructing kernels directly from the intrinsic geometry of a manifold has traditionally been a challenging task, thus it is common to obtain a kernel on the manifold by restricting one defined on the ambient space. This assumption does not substantially limit the practical range of admissible kernels.

\begin{theorem}\label{KSD-Characterization-Compact} Suppose $M$ is compact, $ p\in C^{s+1}(M)$ for some $s>\dim M/2$, and $\kappa$ is a characteristic translation-invariant kernel on $\mathbb{R}^{d'}$. Then the KSD defined via $\Tilde{\kappa}$ satisfies: $Q_n\Rightarrow P\iff \ksd(P,Q_n)\to 0$ for any sequence of distributions $Q_n$.
\end{theorem}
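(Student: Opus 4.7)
The plan is to split the biconditional into a forward implication (a straightforward continuity argument) and its converse, which reduces via compactness to a separation property of the KSD. I would first verify that the Stein kernel $\kappa_p(x,y)$ from \eqref{kp} is continuous and bounded on the compact product $M\times M$. Compactness of $M$, strict positivity of $p\in C^{s+1}(M)$ giving $\log p\in C^{s+1}$ with uniformly bounded covariant derivatives, the fact that $\tilde\kappa\in C^{(1,1)}(M\times M)$ is inherited from $\kappa\in C^{(1,1)}(\mathbb{R}^{d'})$ via the smooth embedding $\psi$, and continuity of the $D^l$ and $\Div D^l$ together give $\kappa_p\in C_b(M\times M)$. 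With $\kappa_p$ continuous and bounded on $M\times M$, $Q_n\Rightarrow P$ implies $Q_n\otimes Q_n\Rightarrow P\otimes P$, and the Portmanteau theorem yields
\begin{equation*}
\ksd^2(P,Q_n)=\iint\kappa_p\,d(Q_n\otimes Q_n)\longrightarrow \iint\kappa_p\,d(P\otimes P)=\ksd^2(P,P)=0,
\end{equation*}
the last equality by Stein's identity (Thm.~\ref{Stein-identity}).

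For the converse, compactness of $M$ makes $\mathcal{P}(M)$ weakly sequentially compact by Prokhorov's theorem. Given any subsequence of $\{Q_n\}$, extract a further weakly convergent subsequence $Q_{n_{k_j}}\Rightarrow Q^*$; the same continuity argument gives $\ksd(P,Q^*)=\lim_j \ksd(P,Q_{n_{k_j}})=0$. The standard ``every subsequence has a subsequence'' argument then reduces the converse to the separation statement $\ksd(P,Q^*)=0\Rightarrow Q^*=P$.

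To establish separation, I would first translate the hypothesis through the closed form of Thm.~\ref{KSDFormThm}: $\ksd(P,Q^*)=0$ is equivalent to $Q^*(\vec{\phi}_p)=0$ in $\mathcal{H}^m_\kappa$, hence (by the reproducing identity of Thm.~\ref{kernel-differentiable}) to $Q^*(\mathcal{T}^l_p f)=0$ for every $l$ and every $f\in \mathcal{H}_\kappa$. Combining with Stein's identity yields $(Q^*-P)(\mathcal{T}^l_p f)=0$ for every $l$ and every $f\in\mathcal{H}_\kappa$. From here the plan is to (i) specialize to $f=\tilde\kappa(\cdot,y)$ and rewrite the resulting identity as a system of first-order differential equations satisfied by the kernel mean embedding $g(y):=(Q^*-P)(\tilde\kappa(\cdot,y))$ together with its $V^l$-twisted variants, where $V^l=D^l\log p+\Div D^l$; (ii) apply Mercer's theorem (Thm.~\ref{Mercer}) to $\tilde\kappa$ to produce an $L^2$-orthonormal spectral basis $\{\phi_k\}\subset \mathcal{H}_{\tilde\kappa}$, using the Sobolev embedding afforded by $s>\dim M/2$ to ensure that the eigenfunctions lie in $C^1(M)$ so $\mathcal{T}^l_p\phi_k$ is a well-defined continuous function; (iii) invoke the fact that a characteristic translation-invariant kernel on $\mathbb{R}^{d'}$ restricts to an integrally strictly positive-definite, hence $C$-universal, kernel on the compact submanifold $M$, so $\mathcal{H}_{\tilde\kappa}$ is dense in $C(M)$; and (iv) combine with Assumption~\ref{assumption-D} that $\{D^l\}$ spans every tangent space, promoting the annihilation of $(Q^*-P)$ against each $\mathcal{T}^l_p f$ into annihilation against a dense subset of $\{g\in C(M):P(g)=0\}$, whereupon Riesz representation forces $Q^*=P$.

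The main obstacle will be step (iv): turning the vanishing of the vector-valued kernel-mean embedding into vanishing of the scalar embedding $g$, and thus of $Q^*-P$. This is where the translation-invariance and the regularity $s>\dim M/2$ are simultaneously needed: the former lets one transfer derivatives between the two arguments of $\tilde\kappa$ through the embedding and supplies the integral strict positive-definiteness needed to test against signed measures (not just probability measures), while the latter ensures that the Mercer eigenfunctions are differentiable enough to be acted upon by $\mathcal{T}^l_p$ and that the image of $\mathcal{T}_p$ contains sufficiently many continuous test functions. I expect this step to proceed either by a local PDE inversion of the transport-type equation $\mathcal{T}_p\vec{f}=h$ patched by a partition of unity, or equivalently by showing that the formal adjoint of $\mathcal{T}_p$ in a suitable Sobolev-dual pairing on $M$ has trivial kernel on the space of signed measures.
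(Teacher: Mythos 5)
Your forward implication is correct and matches the paper's argument in substance (the paper uses Stone--Weierstrass to pass from $Q_n\Rightarrow P$ to $Q_n\times Q_n\Rightarrow P\times P$, which is what your Portmanteau step implicitly requires). Your Prokhorov reduction of the converse to a separation statement $\ksd(P,Q^*)=0\Rightarrow Q^*=P$ is also a legitimate, if slightly different, organizing device: the paper instead shows directly that $\mathcal{H}_{\kappa_p}:=\{\mathcal{T}_p\vec f:\vec f\in\mathcal{H}^m_{\tilde\kappa}\}$ is dense in $P^\perp=\{\eta\in C(M):P(\eta)=0\}$ and feeds $\ksd(P,Q_n)\to 0$ through this density to get $Q_n(f)\to P(f)$ for all $f\in C(M)$ without passing to subsequences.

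The genuine gap is in your step (iv), and the way you deploy $C$-universality and $s>\dim M/2$ there. You propose to use density of $\mathcal{H}_{\tilde\kappa}$ in $C(M)$ to promote the annihilation $(Q^*-P)(\mathcal{T}^l_p f)=0$ from $f\in\mathcal{H}_{\tilde\kappa}$ to $f\in C^1(M)$, and then invert $\mathcal{T}_p$. But $\mathcal{T}^l_p$ contains a first-order derivative of $f$, so it is unbounded from $(C(M),\Vert\cdot\Vert_\infty)$ to $C(M)$; sup-norm density of $\mathcal{H}_{\tilde\kappa}$ in $C(M)$ tells you nothing about $\mathcal{T}^l_p f_n\to\mathcal{T}^l_p f$, and the extension does not go through. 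What is actually needed---and what the paper proves as Lemma~\ref{nu-zeroDistribution}---is the strictly stronger $\mathscr{D}^1_{L^1}$-characteristic property of a characteristic translation-invariant kernel (from \cite{simon2018kernel}): the embedding $\mathcal{H}_\kappa\hookrightarrow C^1_b(\mathbb{R}^{d'})$ is continuous and the associated map from the dual $\mathscr{D}^1_{L^1}$ into $\mathcal{H}_\kappa$ is injective. This is precisely the statement that lets you test the Stein-type distribution $\vec f\mapsto\nu[\mathcal{T}_p(\vec f\circ\psi)]$ against all $\vec f\in[C^1_b(\mathbb{R}^{d'})]^m$, not just $\vec f\in\mathcal{H}^m_\kappa$, after observing it is continuous on $[C^1_b]^m$ via the two auxiliary finite measures $|D^l\log p|\,d|\nu|$ and $|d\psi(D^l)|\,d|\nu|$. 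Without this lemma your step (iv) collapses.

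A second, smaller mismatch: your use of Mercer's theorem and Sobolev embedding for the \emph{eigenfunctions} is not the role that $s>\dim M/2$ plays here. The regularity hypothesis on $p$ is used to solve the Stein equation $\Delta\zeta+g(\nabla\log p,\nabla\zeta)=\eta$ with $\zeta\in W^{s+2}_2(M)$ for each smooth zero-mean $\eta$ (an elliptic solvability result from \cite{barp2018riemann}), and then $W^{s+2}_2(M)\hookrightarrow C^2(M)$ by Sobolev embedding, so $\nabla\zeta$ is a $C^1$ vector field. Assumption~\ref{assumption-D} then lets one write $\nabla\zeta=\sum_l h_l D^l$ with $h_l\in C^1$, giving $\eta=\mathcal{T}_p\vec h$. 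This is the global surjectivity you needed; the ``local PDE inversion patched by a partition of unity'' you sketch is not how it is achieved, and Mercer eigenfunctions play no role.

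So: the outer architecture of your proof (forward by continuity, converse by compactness reduced to separation) is sound, but the separation step as written would not go through, and the two hypotheses you flagged as mysterious (translation-invariance and $s>\dim M/2$) are used in ways quite different from what you describe.
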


Thm. \ref{KSD-Characterization-Compact} significantly strengthens the existing results on manifolds in literature. The $\ksd$ in \cite{xu2020stein} only separated $P$ from $Q$ when they admit $C^1$ densities. \cite{barp2018riemann} requires $\kappa$ to generate a Sobolev space on $M$, which is considered limiting in practice since most of the commonly-used kernels (e.g., Gaussian, IMQ, Cauchy) do not fall into this category. By comparison, Thm. \ref{KSD-Characterization-Compact} requires only mild conditions so that all aforementioned commonly-used kernels are applicable. The compactness of the manifold not only prevents the blow-up of $D^l\log p$ at infinity but also ensures the tightness of $Q_n$, so that the KSD achieves the strongest separation result, i.e, detects the weak convergence, whereas the analogous result in the Euclidean setting \cite[Thm. 3 \& Thm. 9]{barp2024targeted} fails to do so without additional assumptions.

However, there exists a considerable number of non-compact manifolds and non-smooth densities that are commonly-used in practice, e.g., the on the manifold $\mathcal{P}(N)$ of symmetric positive definite matrices, the intrinsic Gaussian distribution $p\propto \exp(-\frac{\rho^2(x,\bar{x})}{2\sigma^2})$, etc. Therefore, we introduce a more general result that applies to locally Lipschitz continuous densities on non-compact manifolds next.

Let $\mathcal{P}_{p,\psi}:=\{Q\in\mathcal{P}(M): D^l\log p, |D^l|, |d\psi(D^l)|, 1\leq l\leq m \text{ are all } Q\text{-integrable} \}$. Here $d\psi$ is the pushforward (differential) of $\psi$ that maps the vector field $D^l$ to the vector field $d\psi(D^l)$ in $\mathbb{R}^{d'}$, and $|d\psi(D^l)|$ is the pointwise length of $d\psi(D^l)$ under the canonical Euclidean metric. Let $L^2(P)$ be the space of square-integrable functions w.r.t. $P$. If $Q$ is absolutely continuous w.r.t. $P$ (equivalent to $Q\ll \Omega$ as $P\sim\Omega$) and the R-N derivative $\frac{dQ}{dP}$ is in $L^2(P)$, and we simply write (with a slight abuse of notation) $Q\in L^2(P)$ and $\Vert Q\Vert_P:=\Vert \frac{dQ}{dP}\Vert_{L^2(P)}$.
\begin{theorem}\label{KSD-Characterization-Noncompact} Suppose $M$ is complete, $p$ is locally Lipschitz continuous, $\kappa$ is characteristic translation-invariant, $\Tilde{\kappa}_p(x,x)$ is $P$-integrable and $P\in\mathcal{P}_{p,\psi}$. Then we have 
\begin{itemize}
    \item[1.]  Given $Q\in\mathcal{P}_{p,\psi}\cap L^2(P)$, 
    $Q=P\iff \ksd(P,Q)=0 $.
    \item[2.] Given $Q_n\in\mathcal{P}_{p,\psi}\cap L^2(P)$ with $\sup_n\Vert Q_n\Vert_P<+\infty$, $Q_n\Rightarrow P\iff \ksd(P,Q_n)\to 0 $.
\end{itemize}
\end{theorem}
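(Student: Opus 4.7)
The plan is to combine the closed form from Theorem \ref{KSDFormThm}, the divergence theorem (Theorem \ref{DivThm}), and the denseness/separation properties of translation-invariant characteristic kernels on $\mathbb{R}^{d'}$ pulled back via $\psi$. The easy directions are immediate: Stein's identity (Theorem \ref{Stein-identity}) gives $Q=P\Rightarrow \ksd(P,Q)=0$ once one verifies its integrability hypotheses, which follows from $P\in\mathcal{P}_{p,\psi}$ together with the constancy of $\sqrt{\Tilde{\kappa}(x,x)}$ for translation-invariant $\Tilde{\kappa}$ and the assumed $P$-integrability of $\Tilde{\kappa}_p(x,x)$. For the forward direction of statement 2, I would argue directly from the closed form $\ksd^2(P,Q_n)=(Q_n\times Q_n)(\Tilde{\kappa}_p)$: the continuity of $\Tilde{\kappa}_p$ together with a uniform-integrability estimate supplied by Cauchy--Schwarz and $\sup_n\Vert Q_n\Vert_P<+\infty$ lets weak convergence pass through to give $\ksd^2(P,Q_n)\to (P\times P)(\Tilde{\kappa}_p)=0$.

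The core is the reverse implication in statement 1. Assume $\ksd(P,Q)=0$. Theorem \ref{KSDFormThm} together with the RKHS identification \eqref{kp} rephrases this as $Q(\mathcal{T}_p\Vec{f})=0$ for every $\Vec{f}\in\mathcal{H}^m_\kappa$. Writing $r=dQ/dP\in L^2(P)$ and applying Theorem \ref{DivThm} to the smooth approximations of the vector field $rfpD^l$ (after truncation/mollification of $r$, with convergence controlled by Cauchy--Schwarz in $L^2(P)$, the boundedness of $\sqrt{\Tilde{\kappa}(\cdot,\cdot)}$, and the $P,Q\in\mathcal{P}_{p,\psi}$ hypotheses on $|D^l|$, $|d\psi(D^l)|$ and $D^l\log p$), the product-rule identity $\Div(rfpD^l)=(D^lr)fp+rp\,\mathcal{T}^l_pf$ collapses $Q(\mathcal{T}^l_pf)=0$ into the distributional statement $\langle D^lr,f\rangle=0$ for every $f\in\mathcal{H}_\kappa$ and every $1\le l\le m$. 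Pulling this back to $\mathbb{R}^{d'}$ through $\psi$, the translation-invariant characteristic kernel $\kappa$ makes $\mathcal{H}_\kappa$ sufficiently rich to identify distributions in the spirit of \cite[Thm.~3]{barp2024targeted}, so $D^lr\equiv 0$ distributionally on $M$ for each $l$. Assumption \ref{assumption-D} (the $D^l$ pointwise span $T_xM$) combined with connectedness of $M$ (Assumption \ref{assumption-M}) and a standard mollification/path-integration argument force $r$ to be constant, and the normalisation $\int r\,dP=Q(M)=1$ nails $r\equiv 1$, i.e., $Q=P$.

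For the reverse implication in statement 2, I would argue by compactness. The uniform bound $\sup_n\Vert Q_n\Vert_P<+\infty$ and Cauchy--Schwarz yield $Q_n(K^c)\leq\Vert Q_n\Vert_P\sqrt{P(K^c)}$, so the tightness of $P$ transfers to $\{Q_n\}$; Prokhorov extracts $Q_{n_k}\Rightarrow Q_\infty$, and lower semicontinuity of $\Vert\cdot\Vert_P$ under weak convergence places $Q_\infty\in L^2(P)$ with $\Vert Q_\infty\Vert_P\leq\liminf_k\Vert Q_{n_k}\Vert_P$. Passing to the limit in the closed form by a truncation/Fatou argument on $\Tilde{\kappa}_p$ gives $\ksd(P,Q_\infty)=0$, hence $Q_\infty=P$ by statement 1; since every subsequential limit equals $P$, the whole sequence satisfies $Q_n\Rightarrow P$.

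The principal obstacle is the distributional identification in the second paragraph: upgrading ``$P(fD^lr)=0$ for every $f\in\mathcal{H}_\kappa$'' to ``$D^lr\equiv 0$ as a distribution on $M$'' when $r$ is merely in $L^2(P)$, $M$ is non-compact, and no global chart exists. Compactness removed this difficulty in Theorem \ref{KSD-Characterization-Compact}; here the translation-invariance assumption enters precisely so that Fourier-type denseness of $\mathcal{H}_\kappa$ inside $C_0(\mathbb{R}^{d'})$ (the engine behind \cite{barp2024targeted}) can be imported, while the $|d\psi(D^l)|$ integrability baked into $\mathcal{P}_{p,\psi}$ is the bridge that lets the ambient-space conclusion descend intrinsically to $M$.
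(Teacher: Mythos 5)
Your route is genuinely different from the paper's, and the difference matters. The paper's central ingredient is a density result quoted from prior work (\cite[Thm.~3.4]{qu2022framework}): for $p>0$ locally Lipschitz, $\{\Delta\zeta + g(\nabla\log p,\nabla\zeta):\zeta\in C^\infty_c(M)\}$ is dense in $L^2_0(P)$. Combining this with Assumption \ref{assumption-D} gives that the Stein range $\{\mathcal{T}_p\vec{h}:\vec{h}\in[C^\infty_c(M)]^m\}$ is dense in $L^2_0(P)$. A separate extension lemma (Lemma \ref{nu-zeroDistribution}, resting on the $\mathscr{D}^1_{L^1}$-characteristic property of translation-invariant characteristic kernels from \cite[Thm.~17]{simon2018kernel}) upgrades $\ksd(P,Q)=0$ from vanishing on $\mathcal{H}_{\tilde\kappa}^m$ to vanishing on $[C^1_b(\mathbb{R}^{d'})]^m\supset [C^\infty_c(M)]^m$. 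Then $P(\eta\cdot\frac{dQ}{dP})=0$ for a dense subset of $L^2_0(P)$ forces $\frac{dQ}{dP}$ constant, and everything else is Cauchy--Schwarz and duality in $L^2(P)$. You instead try to integrate by parts on $Q(\mathcal{T}^l_p f)$ to push the derivative onto $r=dQ/dP$ and conclude $D^l r\equiv 0$. That is a legitimate idea, but it is dual to what the paper does: the paper shows the operator's range is large, you try to show its adjoint has trivial kernel directly.

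There are, however, real gaps that you have sketched around rather than closed. First, $r\in L^2(P)$ has no classical derivative, and the announced ``truncation/mollification'' of $r$ to make $\Div(rfpD^l)$ integrable and the divergence integral vanish is exactly the sort of thing that does not commute nicely with the unbounded $D^l\log p$ appearing in $\mathcal{T}^l_p$; you have not indicated how to control the error terms, and they are not obviously small. Second, the step from ``$P((D^l r)f)=0$ for all $f\in\mathcal{H}_\kappa$'' to ``$D^l r\equiv 0$'' is precisely where the paper needs Lemma \ref{nu-zeroDistribution} and the $\mathscr{D}^1_{L^1}$ framework: a characteristic kernel only separates probability measures a priori, not the signed, possibly infinite measures $(D^l r)\,p\,d\Omega$ that arise here, and the pullback through $\psi$ adds a further layer. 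Waving at \cite{barp2024targeted} does not supply that argument. Third, and most concretely, your Prokhorov route for the reverse implication of statement 2 does not work as written: (a) weak limits do not preserve membership in $\mathcal{P}_{p,\psi}$ (the integrability of $|D^l\log p|$, $|D^l|$, $|d\psi(D^l)|$ against $Q_\infty$ is not inherited from the $Q_{n_k}$), so you cannot invoke statement 1 for $Q_\infty$; and (b) $\tilde\kappa_p$ is neither bounded nor pointwise sign-definite, so a ``truncation/Fatou'' argument does not let you pass $\ksd^2(P,Q_{n_k})\to 0$ to $\ksd^2(P,Q_\infty)=0$. The paper sidesteps both issues by a direct duality argument: the Stein range $\mathcal{H}_{\kappa_p}$ is dense in $L^2_0(P)$, $\ksd(P,Q_n)\to 0$ forces $Q_n(\eta)\to 0$ on that range, and the uniform bound $\sup_n\|Q_n\|_P<\infty$ then propagates convergence to all of $L^2_0(P)$, hence to $C_b(M)$.
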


Although Thm. \ref{KSD-Characterization-Noncompact} relaxes the stringent assumption in prior works that the density of $Q$ must be differentiable, it still requires the absolute continuity of $Q$ w.r.t. $P$ (or $\Omega$). This is due to the assumption that $p$ is only locally Lipschitz continuous, thus $p$ is likely to be non-differentiable on some $P$-null (or $\Omega$-null) set $\mathcal{N}\subset M$. As $\ksd$ will ignore this set, $\ksd(P,Q)$ can still vanish if $Q(\mathcal{N})>0$. Therefore, it is reasonable to assume $Q\ll P$. 

If we impose additional conditions on the smoothness of $P$, it may be possible to establish stronger results to separate $P$ from distributions without densities. However, compared to the result in Euclidean setting (\cite[Thm. 3]{barp2024targeted}), the discussion on non-flat spaces will not only involve a highly complicated analysis of the decay rate of $p$ at infinity, but also depend on the specific shape of the embedding $\psi$.  Elaboration on such a result would take up a lot of (page) space and divert us from the main focus of this paper. Therefore, we will address this topic in our future work.

In addition to the stronger theorems $\ref{KSD-Characterization-Compact}$ and \ref{KSD-Characterization-Noncompact}, in the next theorem, we establish the analog of the 
classical result namely, KSD separates $P$ from $Q$ with differentiable density, for the Riemannian manifolds.

\begin{theorem}\label{KSD-Characterization-weak} Suppose $\kappa$ is $C_0$-universal (not necessarily translation-invariant) on $M$. Suppose further $P$ and $Q$ have locally Lipschitz densities $p$ and $q$ such that $\sqrt{\Tilde{\kappa}(x,x)}|D^l|$, $\sqrt{\Tilde{\kappa}_p(x,x)}$ and $D^l\log(p/q)$, $1\leq l\leq m$ are $Q$-integrable, then $Q=P\iff \ksd(P,Q)=0$.
\end{theorem}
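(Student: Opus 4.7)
The plan is to handle the two directions separately. The forward direction $Q=P \Rightarrow \ksd(P,Q)=0$ is immediate from Stein's identity (Thm. \ref{Stein-identity}), whose integrability hypotheses are subsumed by the given assumptions once we set $Q = P$. For the nontrivial direction $\ksd(P,Q)=0 \Rightarrow Q=P$, the goal is to use the arbitrariness of the test element $\vec f \in \mathcal{H}^m_\kappa$ together with the $C_0$-universality of $\kappa$ to conclude that $\nabla\log(p/q)$ vanishes almost everywhere, and then to use the connectedness of $M$ together with the strict positivity of $p$ to promote this to equality of densities.

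The first step is to derive, by integration by parts via Thm. \ref{DivThm} applied to the vector field $f_l q D^l$, the key identity
\[
Q(\mathcal{T}_p \vec f) \;=\; \sum_{l=1}^m Q\bigl(f_l\, D^l\log(p/q)\bigr), \qquad \vec f \in \mathcal{H}^m_\kappa.
\]
The integrability of $|f_l q D^l|$ and $\Div(f_l q D^l)$ required for the divergence theorem follows from the reproducing-kernel bound $|f_l(x)|\le \|f_l\|_{\mathcal{H}_\kappa}\sqrt{\Tilde{\kappa}(x,x)}$ combined with the given hypotheses that $\sqrt{\Tilde{\kappa}(x,x)}|D^l|$, $\sqrt{\Tilde{\kappa}_p(x,x)}$, and $D^l\log(p/q)$ are $Q$-integrable. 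Since $\ksd(P,Q)=0$ forces the left-hand side to vanish for every $\vec f$, specializing $\vec f$ to have only one nonzero entry yields $Q(f\cdot D^l\log(p/q))=0$ for every $f\in \mathcal{H}_\kappa$ and every $l$. Next, because $\mathcal{H}_\kappa$ is dense in $C_0(M)$ by $C_0$-universality and $D^l\log(p/q)\in L^1(Q)$, a uniform-approximation estimate extends this vanishing to all $f\in C_0(M)$. By the Riesz representation theorem, the finite signed Borel measure $D^l\log(p/q)\cdot Q$ is therefore zero, so $D^l\log(p/q)=0$ holds $Q$-a.e.\ for each $l$; combined with the spanning Assumption \ref{assumption-D}, this yields $\nabla\log(p/q)=0$ $Q$-a.e. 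Since $Q\sim\Omega$ on the open set $\{q>0\}$, the vanishing also holds $\Omega$-a.e.\ on $\{q>0\}$.

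The main obstacle will be converting the statement ``gradient vanishes a.e. on $\{q>0\}$'' into the global conclusion $p=q$ on $M$. On each connected component $U$ of $\{q>0\}$, $\log(p/q)$ is locally Lipschitz with a.e.\ vanishing Riemannian gradient, and a standard Riemannian analogue of the fundamental theorem of calculus for locally Lipschitz functions forces $\log(p/q)$ to be constant on $U$, so $p=c_U q$ there. If some component $U$ were a proper subset of $M$, then connectedness of $M$ (Assumption \ref{assumption-M}) would force $\partial U\neq \emptyset$; picking $x_0\in\partial U$ and a sequence $x_n\in U$ with $x_n\to x_0$, continuity of $p$ and $q$ together with $q(x_0)=0$ would give $p(x_0)=c_U q(x_0)=0$, contradicting $p>0$. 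Hence $\{q>0\}=M$, $p/q$ is a single global constant, and the normalization $\int p\,d\Omega=\int q\,d\Omega=1$ forces the constant to be $1$, giving $P=Q$.
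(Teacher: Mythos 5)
Your proof is correct and takes essentially the same route as the paper's: the key identity $Q(\mathcal{T}_p\vec f)=\sum_l Q\bigl(f_l\,D^l\log(p/q)\bigr)$ (which you obtain by applying the divergence theorem to $f_l q D^l$) is exactly the paper's observation that $\ksd(P,Q)=0$ and Stein's identity for $q$ together give $Q(\mathcal{T}^l_p h-\mathcal{T}^l_q h)=Q(h\,D^l\log(p/q))=0$, followed by $C_0$-universality, the spanning assumption, and connectedness. Your final two paragraphs merely make explicit the last step the paper states tersely (``we conclude $p=q$ by assumption~\ref{assumption-D} and connectedness''), carefully handling the possibility that $\{q>0\}$ is a proper subset of $M$; and you work directly with density of $\mathcal{H}_\kappa$ in $C_0(M)$ rather than pushing forward to $C_0(\mathbb{R}^{d'})$ as the paper does, which is a cosmetic simplification that better matches the theorem's hypothesis.
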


Although Thm. \ref{KSD-Characterization-weak} only covers $Q$ with locally Lipschitz continuous density, it remains essential as it does apply to certain cases ruled out by Thm. \ref{KSD-Characterization-Noncompact}.

\begin{example} Consider the Gaussian distribution $p=\frac{dP}{d\Omega}\propto e^{-\frac{\Vert x\Vert^2}{2}}$ and the multivariate student t-distribution $q=\frac{dQ}{d\Omega}\propto (1+\Vert x\Vert^2)^{-\gamma}$, $\gamma>d/2+1$ on $\mathbb{R}^d$. Here $\frac{dQ}{dP}=\frac{q}{p}\notin L^2({P})$, thus Thm. \ref{KSD-Characterization-Noncompact} does not apply. However, $\nabla \log (p/q)= \frac{2\gamma x}{1+\Vert x\Vert^2}-x$ is $Q$-integrable, and other integrability conditions also hold, thus Thm. \ref{KSD-Characterization-weak} is applicable.
\end{example}

\subsection{Stein operator on Riemannian Homogeneous Spaces} \label{KSD-Homogeneous}

The definition of the Stein operator in \eqref{StOM} does not only require one to find a group of basis that satisfy standing assumption 4, but also requires one to compute the divergence of these vector fields, which is usually challenging for a general vector field, e.g., the one obtained in the proof of Thm. \ref{vector-field-basis}. However, in practice, most of the commonly encountered manifolds are \emph{Riemannian Homogeneous spaces}. In such spaces, we can select the vector fields $D^l$ as a special kind of vector fields, \emph{killing fields}, to get around such computational issues.

Before introducing the Riemannian Homogeneous space, first we introduce the notion of isometry and group action. An \emph{isometry} of a Riemannian manifold $M$ is a diffeomorphism from $M$ onto itself that preserves the distance. The isometry group $I(M)$ is the group of all isometries of $M$, which is a Lie group due to the Myers–Steenrod theorem \cite[Thm. 5.6.19]{petersen2016riemannian}. A \emph{group action} of a Lie group $G$ on a Riemannian manifold $M$ is an continuous group homomorphism $\Psi:G\to I(M)$ that maps each element $g$ to some isometry $\Psi_g$ on $M$. Conventionally, we omit the symbol $\Psi$, and denote the group action by $g.x:=\Psi_g(x)$. Note that $e.x=x$, where $e$ is the identity of $G$.
\begin{definition}
    A \emph{Riemannian homogeneous space} $H$, abbreviated as \emph{homogeneous space} in this work, is a Riemannian manifold such that there exists a Lie group $G$ that acts transitively on $H$, i.e., for each $x,y\in H$, there exists $g\in G$ such that $g.x=y$. 
\end{definition}
For a tangent vector $E\in T_e G$, we take a local curve $\mathfrak{e}(t)$ in the equivalence class of $E$. Since $e.x=x$, thus, $\mathfrak{e}(t).x$ is a local curve on $H$ at $x$, and thus corresponds to a tangent vector in $T_x H$. For each $x\in H$, $\mathfrak{e}(t).x$ corresponds to a tangent vector, thus they form a vector field, denoted by $K$. Such a vector field $K$ is said to be a \emph{killing field} of $G$ on $H$. This correspondence is linear, i.e., if $E^1$ and $E^2$ corresponds to $K^1$ and $K^2$ respectively, then $a E^1+b E^2$ corresponds to $a K^1+ b K^2$. Specifically, killing fields are \emph{divergence-free}, i.e., $\Div K=0$.

\begin{theorem}\label{sub-killing-field} Suppose Lie group $G$ acts transitively on homogeneous space $H$. For a basis $E^1,\dots,E^m$ of $T_e G$ ($m=\dim G$), they correspond to a group of killing field $K^1,\dots,K^m$ in the way previously introduced. Then $K^1,\dots,K^m$ is a group of divergence-free vector fields on $H$ that satisfies standing assumption \ref{assumption-D}.
\end{theorem}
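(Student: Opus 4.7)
The plan is to verify two separate claims: that each $K^l$ is divergence-free, and that at every $x\in H$, the vectors $K^1_x,\dots,K^m_x$ span $T_x H$ (assumption \ref{assumption-D}). Both rest on recognizing $K^l$ as the infinitesimal generator of the one-parameter subgroup action.

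First, I would establish that each $K^l$ is genuinely a Killing field, so that the standard volume-preservation argument applies. Replacing the arbitrary representative $\mathfrak{e}(t)$ of $E^l$ by the one-parameter subgroup $g^l(t):=\exp(tE^l)\subset G$ (still satisfying $(g^l)'(0)=E^l$), the flow of $K^l$ at time $t$ is precisely $x\mapsto g^l(t).x=\Psi_{g^l(t)}(x)$. Since $\Psi_{g^l(t)}$ is an isometry of $H$ for every $t$, the flow of $K^l$ consists of isometries, so $K^l$ is a Killing field. Consequently $\mathcal{L}_{K^l}g=0$, which forces the induced Lie derivative of the volume form to vanish, $\mathcal{L}_{K^l}\Omega=0$. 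Since $\mathcal{L}_{K^l}\Omega=(\Div K^l)\,\Omega$, I obtain $\Div K^l=0$.

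Second, to prove the spanning property, I fix an arbitrary $x\in H$ and consider the orbit map $\phi_x:G\to H$, $g\mapsto g.x$. Because $\Psi:G\to I(H)$ is a continuous homomorphism into the Lie group $I(H)$ (Myers-Steenrod), it is automatically smooth, and hence $\phi_x$ is smooth. The stabilizer $\mathrm{Stab}(x)$ is a closed subgroup, hence a Lie subgroup, and $\phi_x$ descends to an injection of $G/\mathrm{Stab}(x)$ into $H$; by transitivity this injection is a bijective smooth equivariant map, which is a diffeomorphism by the standard homogeneous-space lemma. Consequently $\phi_x$ factors as a surjective submersion followed by a diffeomorphism, so $d\phi_x|_e:T_e G\to T_x H$ is surjective. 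By the definition of the killing field, $d\phi_x|_e(E^l)=\tfrac{d}{dt}\big|_{t=0} g^l(t).x=K^l_x$, so since $\{E^l\}_{l=1}^m$ spans $T_e G$, the images $\{K^l_x\}_{l=1}^m$ span $d\phi_x|_e(T_e G)=T_x H$, verifying assumption \ref{assumption-D} at $x$. As $x$ was arbitrary, the claim follows.

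The main technical obstacle is the submersion claim for $\phi_x$, which ultimately rests on automatic smoothness of the continuous homomorphism $\Psi$ and on the standard identification $G/\mathrm{Stab}(x)\cong H$ for a transitive smooth Lie group action. Once that is in place, the rest is formal: divergence-freeness reduces to volume-preservation of isometries, and the spanning property is just the surjectivity of the differential of the orbit map applied to a basis of $T_e G$.
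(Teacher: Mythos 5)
Your proof is correct, and the spanning argument is essentially the same in spirit as the paper's: both reduce the claim to surjectivity of the map $E\mapsto K_x$ on $T_e G$ and derive this from the identification $H\cong G/\mathrm{Stab}(x)$. The paper does it by choosing a complement $\mathscr{C}_x$ of $T_e\mathrm{Stab}(x)$ in $T_e G$, observing the killing fields of a basis of $\mathscr{C}_x$ are linearly independent at $x$, and counting dimensions; you do it by observing the orbit map $\phi_x$ is a submersion and hence $d\phi_x|_e$ is onto. These are two presentations of the same fact (the kernel of $d\phi_x|_e$ is the stabilizer's Lie algebra). The one genuine addition in your write-up is the proof that $\Div K^l=0$: the paper simply asserts in the surrounding text that Killing fields are divergence-free, whereas you supply the standard argument that the flow of $K^l$ is by the isometries $\Psi_{g^l(t)}$, so $\mathcal{L}_{K^l}g=0$, hence $\mathcal{L}_{K^l}\Omega=(\Div K^l)\Omega=0$. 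This makes your argument a bit more self-contained; otherwise the two proofs coincide.
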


In this case, the \emph{Stein operator} $\mathcal{T}_p$ becomes
\begin{tcolorbox}[colback=gray!5!white,colframe=gray!75!black,title=Stein Operator on Homogeneous Spaces]
\begin{equation}\label{StoH}
\mathcal{T}_p : \Vec{f}\mapsto \sum_{l=1}^m \left[ K^l f_l+ f_l K^l\log p \right], \quad \Vec{f}\in \mathcal{H}^m_\kappa. 
\end{equation}
\end{tcolorbox}

\subsection{Empirical kernel Stein discrepancy}\label{Emp-KSD}

The integral closed form (\ref{KSDForm}) is one of the most significant properties of KSD. However, it may not be computable in practice due to following commonly-encountered situations:

\subsubsection*{Only samples available} In practice, 
$Q$ may be accessible only via samples instead of its exact form of distribution being known. This is commonly encountered in parameter estimation problems where, it is common to use a parameterized density family $p_\theta$ to approximate an unknown distribution $Q$, merely from its samples.

\subsubsection*{Intractable Integral} Sometimes, we do have the exact form of $Q$ but the integral in (\ref{KSDForm}) is intractable. For example, in the rotation tracking problem encountered in robotics \cite{suvorova2021tracking}, one must approximate the posterior distribution of \(Q_k\) with a von Mises-Fisher distribution so that the tracking algorithm (Kalman filter) updates consistently lie in the same space. In a Bayesian fusion problem \cite{lee2018bayesian}, a similar situation arises when it is required to guarantee that the result of the fusion stays in the family. 

\vspace{5mm}

These above two situations also arise in the KL-divergence setting,  $\KL(p,q)=\mathbb{E}_q[\log p]-\mathbb{E}_q[\log q]$, and in practice, it is common to use the empirical KL-divergence. For example, suppose we aim to approximate an unknown density $q$ with a density family $p_\alpha$ using the KL-divergence, but only have samples $x_i$ from $q$ instead of its density. We then minimize the empirical KL-divergence $n^{-1}\sum_{i} \log p_\alpha(x_i)$ as an alternative ($\mathbb{E}_q[\log q]$ is constant w.r.t. $\alpha$), which converges to $\mathbb{E}_q[\log p]$ almost surely by the law of large number. The minimizer of the empirical KL-divergence $n^{-1}\sum_{i} \log p_\alpha(x_i)$ is exactly the maximum likelihood estimator.

Analogously, the KSD has empirical versions, i.e., the $U$- and $V$-statistics:
\begin{equation}\label{U&V}
U_n:= \frac{1}{n(n-1)} \sum_{i\neq j} \kappa_p(x_i,x_j),\quad  V_n:= \frac{1}{n^2}\sum_{i,j} \kappa_p(x_i,x_j),
\end{equation}
which can serve as the alternatives to the integral KSD in \eqref{KSDForm} when we only have samples from $Q$. Several prior research works \cite{chwialkowski2016kernel,liu2016kernelized,xu2021interpretable} have developed a kernel Stein goodness of fit test based on the $U$-statistics.

If we do have the exact form of $Q$ but the integral in \eqref{KSDForm} is intractable, then we can draw samples from $Q$ with various sampling algorithms, e.g., Hamiltonion Monte Carlo or Metropolis-Hastings algorithms and adopt $U_n$ or $V_n$ as the measurement of the dissimilarity between $P$ and $Q$. Alternatively, if these sampling methods are hard to implement, we could use the importance sampling scheme to sample from another easy-to-sample distribution $\Lambda$ such that $Q\ll\Lambda$ and consider the following \emph{weighted empirical KSD}:
\begin{equation}\label{Uw&Vw}
U^w_n:= \frac{1}{n(n-1)} \sum_{i\neq j} \kappa^w_p(x_i,x_j) ,\quad V^w_n:= \frac{1}{n^2}\sum_{i,j} \kappa^w_p(x_i,x_j),
\end{equation}
where $\kappa^w_p$ is the weighted $\kappa_p$ function given by
\begin{equation}
\kappa^w_p(x,y):=\kappa_p(x,y) q^w(x)q^w(y), \quad \text{ where }\ q^w=\frac{dQ}{d\Lambda}\  \text{(R-N derivative)}.
\end{equation}
If $\Lambda=Q$, then $\kappa^w_p$ will degenerate to $\kappa_p$, thus $U^w_n$ and $V^w_n$ will degenerate to $U_n$ and $V_n$. 

\begin{example} Let $Q$ be the Riemannian Gaussian distribution on $\mathbb{S}^{N-1}$ with density $q\propto \exp\left(-\frac{\rho^2(x,\Bar{x}_0)}{2}\right)$ for some fixed $\Bar{x}_0\in\mathbb{S}^{N-1}$. Let the easy-to-sample distribution $\Lambda$ be the uniform distribution on $\mathbb{S}^{N-1}$, then the weighted empirical KSDs between $P$ and $Q$ are:
\[
U^w_n=\frac{1}{n(n-1)} \sum_{i\neq j} \kappa_p(x_i,x_j) q(x_i) q(x_j),\quad V^w_n=\frac{1}{n^2} \sum_{i,j} \kappa_p(x_i,x_j) q(x_i) q(x_j).
\]
Note that the empirical KSDs are proportional to the normalizing constant of $q$.
\end{example}

Similar to the empirical KL-divergence, the empirical KSD will converges to KSD almost surely, as stated in the next theorem:


\begin{theorem}\label{KSDasymptotic} Suppose $M$ is a Riemannian manifold and $\kappa^w_p(x,x)$ is $\Lambda$-integrable. Given $M$-valued samples $x_i\sim \Lambda$, we have
\begin{enumerate}
    \item $U^w_n,V^w_n\xrightarrow{\text{a.s.}}\ksd^2(P,Q)$ with the rate of $O_p(n^{-1})$,
    \item If $P\neq Q$, then $\sqrt{n}[U^w_n-\ksd^2(P,Q)]$ and $\sqrt{n}[V^w_n-\ksd^2(P,Q)]$ both converge to $N\big(0,\Tilde{\sigma}^2\big)$ in distribution, where $\Tilde{\sigma}^2:=4\text{var}_{x'\sim \Lambda}[\mathbb{E}_{x\sim \Lambda} \kappa^w_p(x,x')]$.
    \item If $P = Q$, then $n U^w_n$ converges to $\sum_{k=1}^\infty \lambda_k (Z^2_k-1)$, $n V^w_n$ converges to $\sum_{k=1}^\infty \lambda_k Z^2_k$ in distribution, where $\lambda_k$ are the eigenvalues of $\kappa^w_p(x,y)$ as introduced in Thm. \ref{Mercer} and $Z_k$ are i.i.d. standard Gaussian random variables.
\end{enumerate}
\end{theorem}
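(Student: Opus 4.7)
}

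The plan is to recognize that $U^w_n$ is a standard $U$-statistic of order two with symmetric kernel $\kappa^w_p$ on the i.i.d.\ sample $x_1,\dots,x_n\sim\Lambda$, and then invoke the classical Hoeffding asymptotic theory in its non-degenerate and degenerate incarnations. First I would verify the change-of-measure identity
\begin{equation*}
\mathbb{E}_{x,y\sim\Lambda}\!\left[\kappa^w_p(x,y)\right]
=\iint \kappa_p(x,y)\,q^w(x)q^w(y)\,d\Lambda(x)\,d\Lambda(y)
=\iint \kappa_p(x,y)\,dQ(x)\,dQ(y)
=\ksd^2(P,Q),
\end{equation*}
using $q^w = dQ/d\Lambda$. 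Hence $U^w_n$ is an unbiased estimator of $\ksd^2(P,Q)$, while $V^w_n$ differs from $U^w_n$ only through the diagonal contribution $n^{-2}\sum_i \kappa^w_p(x_i,x_i)$, which will be handled separately via the scalar strong law of large numbers.

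For part 1, the hypothesis that $\kappa^w_p(x,x)$ is $\Lambda$-integrable together with Cauchy--Schwarz on the reproducing kernel yields $\mathbb{E}_\Lambda|\kappa^w_p(x,y)|\le \mathbb{E}_\Lambda\sqrt{\kappa^w_p(x,x)\kappa^w_p(y,y)}<\infty$, so the standard strong law for $U$-statistics (Hoeffding) gives $U^w_n\to\ksd^2(P,Q)$ a.s. The $O_p(n^{-1})$ rate follows from the Hoeffding variance decomposition $\mathrm{Var}(U^w_n)=O(n^{-1})$. The same conclusions for $V^w_n$ come from the identity
\begin{equation*}
V^w_n=\frac{n-1}{n}\,U^w_n+\frac{1}{n^2}\sum_{i=1}^n \kappa^w_p(x_i,x_i),
\end{equation*}
since the diagonal average is $O_p(n^{-1})$.

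For part 2, when $P\neq Q$, Thms.~\ref{KSD-Characterization-Compact}, \ref{KSD-Characterization-Noncompact} or \ref{KSD-Characterization-weak} (whichever is applicable) give $\ksd^2(P,Q)>0$, equivalently the first Hoeffding projection $h_1(x'):=\mathbb{E}_{x\sim\Lambda}[\kappa^w_p(x,x')]-\ksd^2(P,Q)$ is not $\Lambda$-a.s.\ zero (as $h_1\equiv 0$ would force $\ksd^2(P,Q)=0$ by a further integration). The non-degenerate Hoeffding CLT then gives $\sqrt{n}\bigl(U^w_n-\ksd^2(P,Q)\bigr)\Rightarrow N(0,\tilde\sigma^2)$ with $\tilde\sigma^2=4\,\mathrm{Var}_{x'\sim\Lambda}[\,\mathbb{E}_{x\sim\Lambda}\kappa^w_p(x,x')\,]$. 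The corresponding statement for $V^w_n$ follows from the same display above together with Slutsky, because the diagonal term is $O_p(n^{-1/2})$ after centering.

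For part 3, the crucial observation is that under $P=Q$ the kernel $\kappa^w_p$ becomes Hoeffding-degenerate of order one. Indeed, by the reproducing identity encoded in \eqref{kp},
\begin{equation*}
\mathbb{E}_{x\sim\Lambda}\!\left[\kappa^w_p(x,x')\right]
=q^w(x')\,\mathbb{E}_{x\sim Q}\bigl[\langle\vec\phi_p(x),\vec\phi_p(x')\rangle_{\mathcal{H}^m_\kappa}\bigr]
=q^w(x')\,\langle Q(\vec\phi_p),\vec\phi_p(x')\rangle_{\mathcal{H}^m_\kappa},
\end{equation*}
and when $P=Q$ Stein's identity (Thm.~\ref{Stein-identity}) yields $Q(\vec\phi_p)=0$, hence $h_1\equiv 0$ and also $\ksd^2(P,Q)=0$. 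Apply Mercer's theorem (Thm.~\ref{Mercer}) to $\kappa^w_p\in L^2(\Lambda\times\Lambda)$ to obtain the spectral expansion $\kappa^w_p(x,y)=\sum_k\lambda_k\phi_k(x)\phi_k(y)$ with orthonormal $\{\phi_k\}$ in $L^2(\Lambda)$ and $\int\phi_k\,d\Lambda=0$ (degeneracy). Substituting this into $nU^w_n=\frac{1}{n-1}\sum_{i\neq j}\sum_k \lambda_k\phi_k(x_i)\phi_k(x_j)$ and using the multivariate CLT for the vector $(n^{-1/2}\sum_i\phi_k(x_i))_k$ (which is asymptotically i.i.d.\ standard normal $Z_k$) together with the identity $\frac{1}{n-1}\bigl[(\sum_i\phi_k(x_i))^2-\sum_i\phi_k(x_i)^2\bigr]\Rightarrow Z_k^2-1$ yields $nU^w_n\Rightarrow\sum_k\lambda_k(Z_k^2-1)$. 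Finally, $\frac{1}{n}\sum_i\kappa^w_p(x_i,x_i)\to\mathbb{E}_\Lambda[\kappa^w_p(x,x)]=\sum_k\lambda_k$ by the scalar SLLN, so $nV^w_n-nU^w_n\to\sum_k\lambda_k$ and hence $nV^w_n\Rightarrow\sum_k\lambda_k Z_k^2$.

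The main technical obstacle is the degenerate case: one must rigorously justify exchanging the infinite Mercer sum with the limit in distribution, typically by truncating to the first $K$ eigenfunctions, applying the CLT to the finite-dimensional projection, and bounding the tail $\sum_{k>K}\lambda_k$ uniformly in $n$ using $\sum_k\lambda_k=\mathbb{E}_\Lambda[\kappa^w_p(x,x)]<\infty$. The remaining verifications (moment bounds for the CLT, the identification of $h_1$ as non-trivial when $P\neq Q$, and the Slutsky reduction between $U^w_n$ and $V^w_n$) are routine given the integrability hypothesis on $\kappa^w_p(x,x)$.
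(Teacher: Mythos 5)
Your overall plan is correct and matches the standard route the paper relies on (the paper's proof is a pointer to Thm.~4.5 of the authors' Lie-group predecessor \cite{qu2023kernel}, which is exactly the classical Hoeffding $U$-statistics argument you sketch). The change-of-measure identity, the $L^2(\Lambda\times\Lambda)$ bound via $|\kappa^w_p(x,y)|\le\sqrt{\kappa^w_p(x,x)\kappa^w_p(y,y)}$, the identification of degeneracy from $Q(\vec\phi_p)=0$ via Stein's identity, and the Mercer-plus-truncation scheme for part~3 are all the right moves. One reasoning error worth flagging: in part~2 you assert that $h_1\equiv 0$ "would force $\ksd^2(P,Q)=0$ by a further integration," but integrating $h_1\equiv 0$ against $\Lambda$ just gives $0=0$ (the first Hoeffding projection is always centered), so this does not show $P\neq Q\Rightarrow \tilde\sigma^2>0$, and indeed that implication is false in general. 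This is harmless to the final statement because $N(0,\tilde\sigma^2)$ is allowed to be degenerate, so the non-degenerate Hoeffding CLT as usually stated ($\sqrt n(U_n-\theta)\Rightarrow N(0,4\sigma_1^2)$, even when $\sigma_1^2=0$) still delivers the claim; you should simply drop the parenthetical rather than rely on it. A second cosmetic point: variance $O(n^{-1})$ gives fluctuations of order $O_p(n^{-1/2})$, not $O_p(n^{-1})$; the theorem statement itself uses "rate $O_p(n^{-1})$" loosely to mean the variance rate, and your proof inherits that ambiguity — it would be cleaner to state explicitly that $\mathrm{Var}(U^w_n)=O(n^{-1})$ and hence $U^w_n-\ksd^2(P,Q)=O_p(n^{-1/2})$, with the sharper $O_p(n^{-1})$ holding precisely in the degenerate case treated in part~3.
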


\section{Minimum kernel Stein discrepancy estimator } \label{MKSDE}

Since KSD measures the dissimilarity between distributions, one may natually use it on distributional approximation. Suppose we want to approximate a distribution $Q$ with a parametrized family $P_\alpha$, then we define the global minimizer
\begin{equation}\label{MKSDE-def}
\widehat{\alpha}:=\argmin\ksd(P_\alpha,Q)
\end{equation}
as the \emph{minimum kernel Stein discrepancy estimator} (MKSDE). However, as we explained in \S\ref{Emp-KSD}, $\ksd(P_\alpha,Q)$ may not be computable in most of the situations in practice. Therefore, we minimize empirical KSDs over $\alpha$ based on various situations:
\[
U^w_n(\alpha):= \frac{1}{n(n-1)} \sum_{i\neq j} \kappa^w_\alpha(x_i,x_j) ,\quad  V^w_n(\alpha):= \frac{1}{n^2}\sum_{i,j} \kappa^w_\alpha(x_i,x_j),
\]
where $\kappa^w_\alpha(x,x'):=\kappa^w_{p_\alpha}(x,x')$. Here $U^w_n$ and $V^w_n$ accommodate the unweighted case $\Lambda=Q$.

Although $U$-statistics is far more frequently mentioned in prior works \cite{barp2019minimum,chwialkowski2016kernel,liu2016kernelized, xu2021interpretable} as it is an unbiased estimator of $\ksd^2(P_\alpha,Q)$, we notice that the $V$-statistics exhibits better stability for optimization, as explained in the next section studying the asymptotic properties of MKSDE.

\subsection{Asymptotic Properties of MKSDE}\label{MKSDE-asymp}

To obtain the asymptotic behavior of the MKSDE, we re-tag the index $\alpha$ of the density family by $\theta$, and assume that $\theta$ is from some topological space $\Theta$. We denote $\ksd(\theta):=\ksd(P_\theta,Q)$ and let $\kappa^w_\theta(x,x'):=\kappa^w_{p_\theta}(x,x')$. Let \(\Theta_0\subset \Theta\) be the set of best approximators \(\theta_0\), i.e., \(\ksd(\theta_0)=\inf_\theta \ksd(\theta)\). With the additional topological structure on $\Theta$, we can establish stronger asymptotic results of $U^w_n$ and $V^w_n$. The asymptotic results in this section are satisfied by both $U^w_n$ and $V^w_n$. For notational convenience, we let $W_n(\theta)$ denote whichever $U^w_n(\theta)$ or $V^w_n(\theta)$.


\begin{theorem} \label{KSDasymptotic-stronger} If \(\kappa^w_{(\cdot)}(\cdot,\cdot)\) is jointly continuous and \(\sup_{\theta\in K} \kappa^w_\theta(x,x)\) is $\Lambda$-integrable for any compact \(K\subset\Theta\), then $W_n(\theta)\to\ksd^2(\theta)$ compactly almost surely, i.e., the following event 
\[
W_n(\theta) \to \ksd^2(\theta) \text{ uniformly on any compact }  K\subset\Theta
\]
almost surely happens. As a corollary, $\ksd(\cdot)$ is continuous if \(\Theta\) is locally compact.
\end{theorem}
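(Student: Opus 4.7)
The plan is to run the classical uniform strong law of large numbers argument---stochastic equicontinuity plus finite covering---for the $U$- and $V$-statistics $W_n(\theta)$. Fix a compact $K\subset\Theta$. Since $\kappa^w_\theta$ is positive semi-definite (cf.\ the inner-product representation in~\eqref{kp}), Cauchy--Schwarz gives $|\kappa^w_\theta(x,y)|\leq\sqrt{\kappa^w_\theta(x,x)\kappa^w_\theta(y,y)}$. The envelope $h_K(x):=\sqrt{\sup_{\theta\in K}\kappa^w_\theta(x,x)}$ lies in $L^2(\Lambda)$ by hypothesis, hence $\sup_{\theta\in K}|\kappa^w_\theta(x,y)|$ is dominated by $h_K(x)h_K(y)\in L^1(\Lambda\times\Lambda)$.

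For each $\theta_0\in K$ and each $\epsilon>0$, joint continuity of $\kappa^w_{(\cdot)}(\cdot,\cdot)$ combined with dominated convergence (dominator $2h_K(x)h_K(y)$) yields an open neighborhood $U_{\theta_0}\subset K$ with
\[
\mathbb{E}_{\Lambda\times\Lambda}\Big[\sup_{\theta\in U_{\theta_0}}|\kappa^w_\theta-\kappa^w_{\theta_0}|\Big]<\epsilon.
\]
Compactness supplies a finite subcover $K\subset\bigcup_{i=1}^N U_{\theta_i}$. I would then intersect the almost-sure events on which (i)~$W_n(\theta_i)\to\ksd^2(\theta_i)$ for each $i$, by Thm.~\ref{KSDasymptotic}, and (ii)~the SLLN for the $\Lambda\times\Lambda$-integrable function $g_i(x,y):=\sup_{\theta\in U_{\theta_i}}|\kappa^w_\theta-\kappa^w_{\theta_i}|$ gives $n^{-2}\sum_{j,k}g_i(x_j,x_k)\to\mathbb{E}[g_i]<\epsilon$ (the $U$-statistic version $[n(n-1)]^{-1}\sum_{j\neq k}g_i$ has the same limit by Hoeffding's LLN). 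For $\theta\in U_{\theta_i}$, the triangle inequality together with the identity $\ksd^2(\theta)=\mathbb{E}_{\Lambda\times\Lambda}[\kappa^w_\theta]$ (Thm.~\ref{KSDFormThm}) gives
\[
|W_n(\theta)-\ksd^2(\theta)|\leq |W_n(\theta)-W_n(\theta_i)|+|W_n(\theta_i)-\ksd^2(\theta_i)|+\mathbb{E}[g_i],
\]
whose limsup is $\leq 2\epsilon$ almost surely. For the $V$-statistic the extra diagonal term $n^{-2}\sum_j\kappa^w_\theta(x_j,x_j)$ is absorbed by $\sup_{\theta\in K}n^{-2}\sum_j\kappa^w_\theta(x_j,x_j)\leq n^{-1}\cdot n^{-1}\sum_j h_K^2(x_j)\to 0$ a.s.\ via the ordinary SLLN. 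Letting $\epsilon$ range over $\{1/k\}_{k\geq 1}$ and intersecting the resulting full-measure events delivers uniform convergence on $K$; $\sigma$-compactness of $\Theta$ (automatic for any smooth manifold) then promotes this to a single $\omega$-set valid for \emph{every} compact via a further countable intersection over an exhaustion.

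The corollary is then a soft consequence: each $W_n(\cdot,\omega)$ is continuous on $\Theta$ because $\kappa^w_{(\cdot)}$ is jointly continuous, so under local compactness every $\theta$ has a compact neighborhood on which $W_n\to\ksd^2$ uniformly a.s., forcing continuity of $\ksd^2$ (hence of $\ksd$) at $\theta$ as a uniform limit of continuous functions. I expect the principal technical wrinkle to be the measurability of $\sup_{\theta\in U_{\theta_0}}|\kappa^w_\theta-\kappa^w_{\theta_0}|$, which strictly speaking demands that $\Theta$ be second countable so the supremum can be replaced by one over a countable dense subset; in every concrete application of this paper $\Theta$ is a smooth finite-dimensional parameter space, so this is inconsequential. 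A secondary nuisance is arranging a single $\omega$-set that works simultaneously for all $\epsilon_k$, all finite subcovers, and all compacts in an exhaustion, but this is handled by a triple countable intersection.
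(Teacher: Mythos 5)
Your proof is correct and runs the standard Wald-type uniform SLLN argument: envelope domination via $|\kappa^w_\theta(x,y)|\le h_K(x)h_K(y)$, stochastic equicontinuity from joint continuity plus dominated convergence, a finite subcover, pointwise a.s.\ convergence at the centers via Thm.~\ref{KSDasymptotic}, and a countable intersection over $\epsilon_k$ and a compact exhaustion of $\Theta$; the $V$-statistic diagonal and the measurability of the supremum (via second countability of the finite-dimensional parameter space) are the two genuine technical points and you address both. The paper itself relegates the proof of this theorem to an external reference (\cite[\S B]{qu2023kernel}) rather than including it, and your argument is the classical route to exactly this kind of compact-uniform strong consistency, so this matches the intended approach; the only cosmetic fix is that the shrinking neighborhoods $U_{\theta_0}$ should be $\Theta$-open sets intersected with $K$ (or one should enlarge $K$ slightly to a compact $K'$ with $K\subset\mathring{K}'$ so the envelope $2h_{K'}(x)h_{K'}(y)$ dominates the suprema).
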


Let \(\widehat{\Theta}_n\) be the set of MKSDE, i.e.,  global minimizers $\hat{\theta}_n$ of $W_n$, which is a random set. We can establish the strong consistency of MKSDE with Thm. \ref{KSDasymptotic-stronger} in hand. 


\begin{theorem}\label{MKSDEconsistency} Suppose all the conditions in Thm. \ref{KSDasymptotic-stronger} hold and suppose $\Theta$ satisfies one of following three conditions:
\begin{enumerate}
    \item $\Theta$ is compact;
    \item $\Theta$ is a geodesic convex subset of a Riemannian manifold, $W_n$ is convex on $\Theta$, $\Theta_0$ is non-empty, compact and $\Theta_0\subset\mathring{\Theta}_2$(interior);
    \item $\Theta=\Theta_1\times\Theta_2$, where $\Theta_1$ is compact, and for each fixed
    $\theta_1\in\Theta_1$, $\{\theta_1\}\times\Theta_2$ and $W_n(\theta_1,\cdot)$ satisfy the second condition.
\end{enumerate}
Then \(\Theta_0\), \(\widehat{\Theta}_n\) are non-empty for large \(n\) and \(\sup_{\theta\in \widehat{\Theta}_n} \rho(\theta,\Theta_0)\to 0\) almost surely.
\end{theorem}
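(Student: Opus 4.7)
The plan is to handle the three cases in sequence, each building on the previous, with Theorem~\ref{KSDasymptotic-stronger} as the common engine: it supplies both continuity of $\ksd^2(\cdot)$ (since $\Theta$ is locally compact in all three cases) and compact uniform convergence $W_n\to\ksd^2$ almost surely. In \textbf{Case 1}, where $\Theta$ is already compact, the extreme value theorem immediately gives nonemptiness of both $\Theta_0$ and $\widehat{\Theta}_n$. For the consistency claim I would run the standard $M$-estimator argument: fix $\omega$ in the almost sure convergence event, suppose by contradiction that $\rho(\hat{\theta}_{n_k},\Theta_0)\geq\varepsilon$ along a subsequence of some $\hat{\theta}_{n_k}\in\widehat{\Theta}_{n_k}$, extract a sub-subsequence limit $\theta^{\ast}$ by compactness, and pass to the limit in $W_{n_k}(\hat{\theta}_{n_k})\leq W_{n_k}(\theta_0)$ (with any $\theta_0\in\Theta_0$) using uniform convergence to deduce $\ksd^2(\theta^{\ast})\leq\min\ksd^2$, contradicting $\rho(\theta^{\ast},\Theta_0)\geq\varepsilon$.

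For \textbf{Case 2} the strategy is to trap $\widehat{\Theta}_n$ in a compact set and then invoke Case~1. I would choose a compact geodesically convex neighborhood $K$ of $\Theta_0$ inside $\mathring{\Theta}$, which exists since $\Theta_0$ is compact in the interior. Continuity of $\ksd^2$ on the compact $\partial K$ yields a strict gap $c:=\min_{\partial K}\ksd^2 > m:=\min_{\Theta}\ksd^2$. Uniform convergence of $W_n$ on $K$ transfers this gap: for large $n$, $W_n>c-\varepsilon$ on $\partial K$ while $W_n(\theta^{\ast})<m+\varepsilon$ for some chosen $\theta^{\ast}\in\Theta_0$, where $\varepsilon$ is small enough that $m+\varepsilon<c-\varepsilon$. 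Convexity of $W_n$ then forces any minimizer $\hat{\theta}_n$ to lie in $K$: a geodesic from an exterior $\hat{\theta}_n$ to $\theta^{\ast}$ would cross $\partial K$ at some $\theta'$ whose $W_n$-value is simultaneously bounded above by a convex combination of $W_n(\hat{\theta}_n)\leq W_n(\theta^{\ast})<m+\varepsilon$ and $W_n(\theta^{\ast})<m+\varepsilon$, yet bounded below by $c-\varepsilon$ from the gap, a contradiction. Case~1 applied on the compact $K$ then closes Case~2.

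For \textbf{Case 3} the plan is to lift the Case~2 argument uniformly in the $\theta_1\in\Theta_1$ component. Denote $\Theta_0(\theta_1):=\argmin_{\theta_2}\ksd^2(\theta_1,\theta_2)\subset\mathring{\Theta}_2$, compact by hypothesis. The crucial step is to produce a single compact $K_0\subset\mathring{\Theta}_2$ whose interior contains $\bigcup_{\theta_1\in\Theta_1}\Theta_0(\theta_1)$; this is \emph{the main obstacle}. A direct appeal to Berge's maximum theorem fails because $\Theta_2$ need not be compact, so upper hemicontinuity of $\theta_1\mapsto\Theta_0(\theta_1)$ has to be established by hand, using joint continuity of $\ksd^2$ together with convexity in $\theta_2$ to rule out the escape of per-fiber minimizers to infinity as $\theta_1$ varies. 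A finite cover of the compact $\Theta_1$ by neighborhoods on which $\Theta_0(\theta_1)$ stays in a common compact piece of $\mathring{\Theta}_2$ then delivers $K_0$. After that, continuity of $\ksd^2$ on the compact $\Theta_1\times\partial K_0$ yields a uniform-in-$\theta_1$ gap, uniform convergence on $\Theta_1\times K_0$ transfers it to $W_n$, and the Case~2 convexity argument applied slicewise confines the $\theta_2$-coordinate of every global minimizer of $W_n$ to $K_0$ eventually. Case~1 on the compact $\Theta_1\times K_0$ then completes the proof.
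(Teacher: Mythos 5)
The paper delegates the proof of Theorem~\ref{MKSDEconsistency} to an external reference, so I can only judge your argument on its own merits rather than compare approaches directly. Your outline is sound: Case~1 is the standard argmax-consistency argument for $M$-estimators, and Case~2 is the usual trapping argument for convex criteria (note that $K$ need not itself be geodesically convex; compactness of $K$, $\Theta_0\subset K^\circ$, and geodesic convexity of $\Theta$ are enough for the boundary-crossing contradiction). One small addition worth making explicit in Case~2: the boundary-gap plus convexity bounds $W_n$ from \emph{below} by $c-\varepsilon>\inf_K W_n$ on all of $\Theta\setminus K$, which simultaneously forces every minimizer into $K$ \emph{and} guarantees $\widehat{\Theta}_n\neq\emptyset$, since $\inf_\Theta W_n=\inf_K W_n$ is attained on the compact $K$.

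The place where you flag uncertainty — producing a single compact $K_0\subset\mathring{\Theta}_2$ with $\bigcup_{\theta_1\in\Theta_1}\Theta_0(\theta_1)\subset K_0^\circ$ in Case~3 — is indeed the crux, but the ingredients you list do close it; here is how to finish. Fix $\theta_1^0\in\Theta_1$, pick a compact $K(\theta_1^0)\subset\mathring{\Theta}_2$ with $\Theta_0(\theta_1^0)\subset K(\theta_1^0)^\circ$, and set $\delta:=\min_{\partial K(\theta_1^0)}\ksd^2(\theta_1^0,\cdot)-m(\theta_1^0)>0$ where $m(\theta_1):=\inf_{\theta_2}\ksd^2(\theta_1,\theta_2)$. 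Joint continuity of $\ksd^2$ on the compact $\{\theta_1^0\}\times\partial K(\theta_1^0)$ and at $(\theta_1^0,\theta_2^0)$ for a fixed $\theta_2^0\in\Theta_0(\theta_1^0)$ gives a neighborhood $U(\theta_1^0)$ on which $\ksd^2(\theta_1,\cdot)>m(\theta_1^0)+\delta/2$ along $\partial K(\theta_1^0)$ while $\ksd^2(\theta_1,\theta_2^0)<m(\theta_1^0)+\delta/4$. Convexity of $\ksd^2(\theta_1,\cdot)$ then forces $\ksd^2(\theta_1,\theta_2)>m(\theta_1^0)+\delta/2>m(\theta_1)$ for all $\theta_2\notin K(\theta_1^0)$, so $\Theta_0(\theta_1)\subset K(\theta_1^0)$ for every $\theta_1\in U(\theta_1^0)$. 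A finite subcover of $\Theta_1$ gives $K_0:=\bigcup_i K(\theta_1^{(i)})$, and enlarging slightly yields the strict inclusion in $K_0^\circ$. From there your remaining steps (the uniform gap on $\Theta_1\times\partial K_0$, slicewise convexity trapping, and reduction to the compact case) go through as written, and also give nonemptiness of $\Theta_0$ and of $\widehat{\Theta}_n$ by attainment on $\Theta_1\times K_0$. So the proposal is correct once this lemma is supplied; nothing in your plan would fail.
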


It is worth noting that if $Q$ is a member of the family $P_\theta$, then the global minimizer set $\Theta_0$ is a singleton $\{\theta_0\}$. In such a situation, the MKSDE always converges to the unique ground truth $\theta_0$. Moreover, if the parameter space $ \Theta:=  \Theta_1\times\cdots\times\Theta_m $ is multi-dimensional, we combine all compact components into one compact parameter space, and hence the convex components as well, so that Thm. \ref{KSDasymptotic-stronger} is still applicable.

Existing result in literature \cite[Thm. 3.3]{barp2019minimum} showed the consistency of MKSDE, assuming that the parameter space $\Theta$ is either compact or satisfies the conditions of convexity, which is not applicable to the case where there are compact and convex parameters simultaneously. For example, consider the Riemannian Gaussian distribution $p\propto \exp(-\frac{\rho^2(x,\bar{x})}{2\sigma^2})$ on a compact manifold, if we redenote $\varsigma:=\sigma^{-2}$, then $\mu$ is a "compact" parameter and $\varsigma$ is a "convex" parameter.
 
To establish the asymptotic normality of MKSDE, we assume that \(\Theta\) is a connected Riemannian manifold with the Riemannian logarithm map $\Log$. We assume that \(\Theta_0\) and \(\widehat{\Theta}_n\) are non-empty for \(n\) large enough, and $\hat{\theta}_n$ is a sequence of MKSDE that converges to one of the global minimizer $\theta_0$ of $\ksd(\theta)$. Additionally, we assume the following conditions:
\begin{enumerate}
    \item[\textbf{(A1)}] $\kappa^w_\theta(x,y)$ is jointly continuous, and twice continuously differentiable in $\theta$.
    \item[\textbf{(A2)}] there exists a compact neighborhood $K$ of $\theta_0$ s.t. $\sup_{\theta\in K}\Vert \nabla \kappa^w_{\theta_0}\Vert$ is $\Lambda\times \Lambda$-integrable.
    \item[\textbf{(A3)}]$\Vert \nabla \kappa^w_{\theta_0}\Vert^2$ and $\Vert\mathcal{I}_{\theta_0}\Vert$ are $\Lambda\times \Lambda$-integrable, $\Vert\mathcal{I}_{\theta_0}(x,x)\Vert$ is $\Lambda$-integrable.
    \item[\textbf{(A4)}] $\mathcal{I}_{\theta_0}(x,y)$ is equi-continuous at $\theta_0$. 
    \item[\textbf{(A5)}] $\Gamma:=\frac{1}{2}\mathbb{E}_{x,y\sim w} [\mathcal{I}_{\theta_0}(x,y)]$ is invertible.
\end{enumerate}
Here $\nabla \kappa^w_\theta(x,y)$ represents the gradient of $\kappa^w_\theta(x,y)$ w.r.t $\theta$, and $\mathcal{I}_\theta(x,y)$ represents the Hessian of $\kappa^w_\theta(x,y)$ w.r.t $\theta$. In addition, let $\Sigma$ be the covariance matrix of the random vector $\mathbb{E}_{Y\sim w}[\kappa^w_{\theta_0}(x,Y)]$.


\begin{theorem}\label{MKSDECLT} Under assumptions $\textbf{A1}\sim\textbf{A5}$, 
 \(\sqrt{n}\Log_{\theta_0} (\hat{\theta}_n) \xrightarrow{d.} \mathcal{N}(0,\Gamma^{-1}\Sigma\Gamma^{-1})\). 
\end{theorem}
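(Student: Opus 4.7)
The plan is a standard M-estimator / delta-method argument, adapted to the Riemannian setting by working in normal coordinates at $\theta_0$. By Thm. \ref{MKSDEconsistency}, $\hat{\theta}_n \to \theta_0$ almost surely, and since $\theta_0$ lies in $\mathring{\Theta}$ while $W_n$ is $C^2$ in $\theta$ by \textbf{A1}, the MKSDE satisfies the first-order condition $\nabla W_n(\hat{\theta}_n)=0$ eventually, where $\nabla$ is the Riemannian gradient on $\Theta$. Assumption \textbf{A1} together with \textbf{A2} also justifies interchanging differentiation with the finite sum (and, when expectations appear later, with the $\Lambda$-integration), so $\nabla W_n(\theta)$ is itself a U/V-statistic with symmetric kernel $\nabla\kappa^w_\theta(x,y)$.

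Next, I expand $\nabla W_n$ along the minimizing geodesic $\gamma$ from $\theta_0$ to $\hat{\theta}_n$ via the fundamental theorem of calculus for the Levi--Civita connection, obtaining
\begin{equation*}
0 \;=\; \nabla W_n(\theta_0) \;+\; \overline{H}_n\bigl(\Log_{\theta_0}\hat{\theta}_n\bigr),
\end{equation*}
where $\overline{H}_n:T_{\theta_0}\Theta\to T_{\theta_0}\Theta$ is the parallel-transported average of the Hessian of $W_n$ along $\gamma$. Assuming $\overline{H}_n$ is eventually invertible, rearranging gives
\begin{equation*}
\sqrt{n}\,\Log_{\theta_0}(\hat{\theta}_n) \;=\; -\,\overline{H}_n^{-1}\sqrt{n}\,\nabla W_n(\theta_0),
\end{equation*}
and it suffices to treat the two factors separately and then apply Slutsky.

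For the score term, I view $\sqrt{n}\nabla W_n(\theta_0)$ as a U-statistic of order two with symmetric kernel $\nabla\kappa^w_{\theta_0}$; Hoeffding's CLT, whose integrability hypotheses are provided by \textbf{A3}, yields $\sqrt{n}\nabla W_n(\theta_0)\xrightarrow{d.}\mathcal{N}(0,4\Sigma)$, where $\Sigma$ is the covariance of the first Hoeffding projection $x\mapsto \mathbb{E}_{Y\sim\Lambda}[\nabla\kappa^w_{\theta_0}(x,Y)]$. For the Hessian factor, a uniform LLN on the compact $K$ of \textbf{A2}, combined with equicontinuity of $\mathcal{I}_{\theta_0}$ at $\theta_0$ from \textbf{A4} and the consistency of $\hat{\theta}_n$, gives $\overline{H}_n\xrightarrow{p.}\mathbb{E}_{\Lambda\times\Lambda}[\mathcal{I}_{\theta_0}]=2\Gamma$. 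Invertibility of $\Gamma$ (\textbf{A5}) and Slutsky then deliver the stated limit $\mathcal{N}(0,\Gamma^{-1}\Sigma\,\Gamma^{-1})$, the factors $4$ and $\tfrac12$ combining to produce the sandwich form.

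The main obstacle is the Riemannian bookkeeping for $\overline{H}_n$: to make the displayed identity a genuine equation in $T_{\theta_0}\Theta$, one must parallel-transport the Hessian of $W_n$ along the random geodesic $\gamma$, verify that the resulting integrand is measurable and dominated on $K$ (so that Fubini and dominated convergence apply uniformly in $n$), and confirm that $\hat{\theta}_n$ eventually lies in a totally normal neighborhood of $\theta_0$ (which is automatic from consistency on a connected Riemannian manifold). A secondary technicality is handling the $V$-statistic case: the diagonal term $n^{-1}\kappa^w_\theta(x_i,x_i)$ in $V^w_n$ contributes gradients and Hessians of order $O_p(n^{-1/2})$ and $o_p(1)$ respectively, thanks to the integrability in \textbf{A3}, so $V^w_n$ and $U^w_n$ share the same limiting law and the unified notation $W_n$ is legitimate. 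Once these geometric and measurability points are settled, the probabilistic core is the classical Hoeffding CLT plus a uniform LLN.
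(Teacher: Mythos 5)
Your proposal is the standard Z-estimator/M-estimator CLT argument — first-order condition at $\hat{\theta}_n$, Riemannian Taylor expansion of $\nabla W_n$ along the geodesic to $\theta_0$ with parallel-transported averaged Hessian, Hoeffding CLT for the score term, uniform LLN plus equicontinuity (A2, A4) for the Hessian term, and Slutsky, with the factors $4$ (from the U-statistic CLT) and $\tfrac12$ (from the definition of $\Gamma$) combining to give the sandwich $\Gamma^{-1}\Sigma\Gamma^{-1}$. This is precisely the approach the paper relies on (it defers the details to Thm. 4.8 of the companion work), so your argument matches; the only cosmetic difference is that you spell out the U-vs-V technicality and the parallel-transport bookkeeping more explicitly, and you correctly read $\Sigma$ as the covariance of $\mathbb{E}_{Y}[\nabla\kappa^w_{\theta_0}(x,Y)]$ despite the missing $\nabla$ in the paper's statement of that quantity.
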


\subsection{MKSDE Composite goodness of Fit Test}\label{MKSDE-GoF}

KSD is commonly used to develop normalization-free goodness of fit tests \cite{chwialkowski2016kernel,liu2016kernelized}, which test whether a group of samples can be well-modeled by a given distribution $P$. More precisely, if we denote by $Q$ the unknown underlying distribution of samples, then we aim to test $H_0:P=Q$ versus $H_1:P\neq Q$. However, in many applications the candidate distribution for the given samples is usually not a specific distribution but a parameterized family $P_\theta$, while most existing methods only apply to a specific candidate, and requires testing individually for each member of $P_\theta$. Recently, a \emph{composite goodness-of-fit test} was developed to test whether a group of samples matches a family $P_\theta$ \cite{key2021composite}, i.e., test $H_0: \exists\theta_0, P_{\theta_0}=Q $ versus $H_1: \forall\theta, P_\theta\neq Q$, and was later generalized to all Lie groups in \cite{Qu2024kernel}.
This however was valid for samples of distributions over Lie groups and {\it not general Riemannian manifolds}. In this section, {\it we will generalize the composite goodness of fit test to all Riemannian manifolds}, develop a one-shot (direct) method, using the MKSDE obtained by minimizing (\ref{Uw&Vw}).

To implement the test, we assume the conditions in Thm. \ref{MKSDEconsistency} hold, so that \(\Theta_0\) and \(\widehat{\Theta}_n\) are non-empty. We also assume that \(\Theta_0\) is a singleton so that the MKSDE \(\hat{\theta}_n=\argmin \wksd^2_n(\theta)\) converges to the unique ground truth \(\theta_0\). We follows the notation in \ref{MKSDE-asymp} letting $W_n$ denote whichever $U^w_n$ or $V^w_n$.

Under the null hypothesis $H_0$, $n W_n(\theta_0)$ converges to $\sum_k \lambda_k (Z^2_k-1)$ or $\sum_k \lambda_k Z^2_k$ in distribution asymptotically by Thm. \ref{KSDasymptotic}. Let $\gamma_{1-\beta}$ be the $(1-\beta)$-quantile of $\sum_k \lambda_k (Z^2_k-1)$ or $\sum_{k=1}^\infty \lambda_k Z^2_k$ with significance level $\beta$.  We reject $H_0$ if $n W_n(\hat{\theta}_n)\geq \gamma_{1-\beta}$, as it implies $n W_n(\theta_0)\geq n W_n(\hat{\theta}_n) \geq \gamma_{1-\beta} $, since $\hat{\theta}_n$ is the minimizer of $W_n$. 

As there is no method in general to directly compute the infinite eigenvalues $\lambda_k$-s of $\kappa^w_{\theta_0}$, Gretton et al. \cite{gretton2009fast} introduced a method to approximate $\lambda_k$-s by the eigenvalues of the empirical matrix $G^w_n(\theta_0):=n^{-1} (\kappa^w_{\theta_0}(x_i,x_j))_{i j}$. Note that the ground truth $\theta_0$ is unknown in our setting, but the minimizers $\hat{\theta}_n$ converge to $\theta_0$ almost surely under specific conditions by Thm. \ref{MKSDEconsistency}. Therefore, we may approximate $\lambda_k$ by the eigenvalues of the empirical matrix $G^w_n(\hat{\theta}_n):=n^{-1}(\kappa^w_{\hat{\theta}_n}(x_i,x_j))_{i j}$. Let $\hat{\lambda}_k$ be the eigenvalues of the matrix $G^w_n(\hat{\theta}_n)$ (set $\hat{\lambda}_k:=0$ for $k>n$), then we have

\begin{theorem}\label{GoF}  Under the conditions in Thm. \ref{MKSDEconsistency}, $\sum_{k=1}^\infty (\hat{\lambda}_k-\lambda_k) Z^2_k\to 0$ in probability.
\end{theorem}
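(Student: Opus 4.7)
The plan is to split the target difference into a parameter plug-in piece and a classical empirical spectrum piece, then control the infinite sum by a trace-based truncation argument. Introduce the intermediate eigenvalues $\tilde{\lambda}_k$ of the Gram matrix $G^w_n(\theta_0) = n^{-1}(\kappa^w_{\theta_0}(x_i,x_j))_{ij}$ (padded with zeros for $k > n$), and write
\[
\hat{\lambda}_k - \lambda_k = (\hat{\lambda}_k - \tilde{\lambda}_k) + (\tilde{\lambda}_k - \lambda_k).
\]
The second piece is the classical Nystr\"om approximation of the Mercer spectrum by the empirical Gram matrix established in Gretton et al.~\cite{gretton2009fast}, which yields $\sum_k (\tilde{\lambda}_k - \lambda_k) Z_k^2 \to 0$ in probability and is precisely the fact motivating the approximation scheme recalled just before the theorem. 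The novelty lies in the first piece, which captures the additional error from plugging in the estimator $\hat{\theta}_n$ in place of the unknown $\theta_0$.

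For the plug-in piece I would apply the Hoffman--Wielandt inequality to the symmetric matrices $G^w_n(\hat{\theta}_n)$ and $G^w_n(\theta_0)$, obtaining
\[
\sum_k (\hat{\lambda}_k - \tilde{\lambda}_k)^2 \leq \|G^w_n(\hat{\theta}_n) - G^w_n(\theta_0)\|_F^2.
\]
Joint continuity of $\kappa^w_\theta$ in $\theta$, the local-uniform $\Lambda$-integrability bounds assumed in \S\ref{MKSDE-asymp}, and the consistency $\hat{\theta}_n \to \theta_0$ supplied by Thm.~\ref{MKSDEconsistency} force this Frobenius norm to $0$ almost surely, via the strong law applied to the entry-wise squared differences. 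Weyl's inequality then yields $\hat{\lambda}_k - \tilde{\lambda}_k \to 0$ a.s.\ for each fixed $k$; combining with Gretton's result gives pointwise convergence $\hat{\lambda}_k \to \lambda_k$ in probability for every $k$.

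To pass from pointwise eigenvalue convergence to convergence of the full series, I would truncate at level $K$. The finite head $\sum_{k=1}^K (\hat{\lambda}_k - \lambda_k) Z_k^2$ tends to $0$ in probability since each summand does. For the two tails, I condition on the data and apply Markov's inequality: the conditional expectation of the $\lambda_k$-tail equals $\sum_{k > K} \lambda_k$, which vanishes as $K \to \infty$ because Mercer's theorem gives $\sum_k \lambda_k = \int \kappa^w_{\theta_0}(x,x)\, d\Lambda(x) < \infty$; while the conditional expectation of the $\hat{\lambda}_k$-tail equals $\tr(G^w_n(\hat{\theta}_n)) - \sum_{k=1}^K \hat{\lambda}_k$, and the SLLN with continuity in $\theta$ gives $\tr(G^w_n(\hat{\theta}_n)) \to \sum_k \lambda_k$ a.s., so that $\sum_{k > K} \hat{\lambda}_k \to \sum_{k > K} \lambda_k$ a.s. A standard $\varepsilon$-argument (pick $K$ large, then $n$ large) then closes the argument.

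The main obstacle I anticipate is the tight interplay among the three approximation layers: plug-in of $\hat{\theta}_n$, Nystr\"om replacement of the Mercer spectrum, and the infinite tail. Any direct $\ell^2$-to-$\ell^1$ conversion of a Hoffman--Wielandt bound costs a factor $\sqrt{n}$ from the rank of the Gram matrix and is therefore unusable. The truncation strategy sidesteps this by separating the head (controlled by eigenvalue perturbation in a fixed-dimensional subspace) from the tail (controlled by the trace identity), which is exactly where the trace-class nature of the Mercer operator of $\kappa^w_{\theta_0}$ enters.
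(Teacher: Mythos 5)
The paper does not actually contain an argument for this theorem; it delegates to an external reference (Section~D of Qu et al.\ 2023), so there is no in-manuscript proof to line your attempt up against. Taken on its own, your proposal is structurally sound and fills the gap in a natural way: the decomposition into a plug-in piece and a Nystr\"om piece, eigenvalue perturbation for the former, and a truncation/trace argument for the infinite series all fit together.

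Two points need tightening, however. First, the phrase ``strong law applied to the entry-wise squared differences'' is not a plain SLLN, because the plug-in $\hat{\theta}_n$ is data-dependent and the number of entries grows like $n^2$. What actually closes the argument is a uniform LLN over a compact neighbourhood $K$ of $\theta_0$ for the V-statistic with kernel $h_\theta(x,y):=[\kappa^w_\theta(x,y)-\kappa^w_{\theta_0}(x,y)]^2$ --- precisely the compact-uniform a.s.\ convergence mechanism behind Thm.~\ref{KSDasymptotic-stronger} --- combined with $\hat{\theta}_n\to\theta_0$ a.s.\ from Thm.~\ref{MKSDEconsistency}, since the uniform limit $\Lambda\times\Lambda(h_{\theta_0})$ is zero. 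Second, the hypothesis you actually inherit from Thm.~\ref{MKSDEconsistency} is only $L^1(\Lambda)$-integrability of $\sup_{\theta\in K}\kappa^w_\theta(x,x)$, not $L^2$. So the $\Lambda\times\Lambda$-integrable envelope for $\sup_{\theta\in K}h_\theta(x,y)$ that the uniform LLN requires is not automatic, and at first sight the Hoffman--Wielandt/Frobenius step seems to need more. It goes through because $\kappa^w_\theta$ is a positive semi-definite kernel, so the Cauchy--Schwarz bound $[\kappa^w_\theta(x,y)]^2\leq\kappa^w_\theta(x,x)\,\kappa^w_\theta(y,y)$ factorizes the envelope into a product of two $L^1(\Lambda)$ functions, hence $\Lambda\times\Lambda$-integrable by Fubini. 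You should state this explicitly; otherwise the reader will suspect you have quietly imported the $L^2$-type gradient conditions A2--A3, which belong to Thm.~\ref{MKSDECLT} and are not assumed here.

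One minor stylistic remark: invoking Gretton et al.\ for the full series statement $\sum_k(\tilde{\lambda}_k-\lambda_k)Z_k^2\to 0$ is redundant once you set up the truncation, which already handles both pieces of the decomposition at once; all you really need from that reference is pointwise convergence $\tilde{\lambda}_k\to\lambda_k$ for each fixed $k$. The rest of the tail argument --- Markov conditionally on the data, trace-class of the Mercer operator via $\sum_k\lambda_k=\Lambda(\kappa^w_{\theta_0}(\cdot,\cdot))<\infty$, and the a.s.\ trace convergence $\tr(G^w_n(\hat{\theta}_n))\to\sum_k\lambda_k$ via the same uniform LLN applied to $n^{-1}\sum_i\kappa^w_\theta(x_i,x_i)$ --- is correct.
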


Therefore, the $\sum_{k=1}^n \hat{\lambda}_k (Z^2_k-1)$ and $\sum_{k=1}^n \hat{\lambda}_k Z^2_k$ can serve as an empirical estimate of the asymptotic distribution $\sum_{k=1}^\infty \lambda_k (Z^2_k-1)$ and $\sum_{k=1}^\infty \lambda_k Z^2_k$. The MKSDE goodness of fit test algorithm is summarized in the following algorithm block \ref{GoF-Algo}.

\begin{algorithm}
\caption{MKSDE goodness of fit test}\label{GoF-Algo}
\textbf{Input}: population $x_1,\dots,x_n\sim w$; sample size $n$; number of generations $n'$; significance level $\beta$.

 \textbf{Test:} $H_0:\exists\theta_0, P_{\theta_0}=Q$ versus $H_1:\forall\theta,\    P_\theta \neq Q$.

\textbf{Procedure:}

\Indp 1. Obtain minimizer $\hat{\theta}_n$ of $W_n(\theta)$ in \eqref{Uw&Vw}.

 2. Obtain the eigenvalues $\hat{\lambda}_1,\dots, \hat{\lambda}_n$ of $G^w_n(\hat{\theta}_n):=n^{-1}(\kappa^w_{\hat{\theta}_n}(x_i,x_j))_{i j}$.

 3. Sample $Z^l_k\sim N(0,1)$, $1\leq k\leq n$, $1\leq l\leq n'$ independently.

 4. Compute $\gamma^l=\sum_{k=1}^n \hat{\lambda}_k [(Z^l_k)^2-1] $ or $\sum_{k=1}^n \hat{\lambda}_k (Z^l_k)^2 $.

 5. Determine estimation $\hat{\gamma}_{1-\beta}$ of $(1-\beta)$-quantile using $\gamma^1,\dots,\gamma^{n'}$.

 \Indm

 \textbf{Output:} Reject \(H_0\) if \(n W_n(\hat{\theta}_n)>\hat{\gamma}_{1-\beta}\). 
\end{algorithm}
\section{Closed Form KSD on Homogeneous Spaces}\label{Examples}

As we can see from our earlier discussion, the calculation of 
$\kappa_p$ function from \eqref{kp} plays an important role in the practical usage of KSD and MKSDE. Suppose $H$ is a homogeneous space, and $\{K^l\}_{l=1}^m$ is a killing field basis, $\kappa$ is kernel function and $p$ is a density on $H$. After some straightforward calculations, we obtain 
\begin{equation}\label{kp-general}
\begin{aligned}
\kappa_p(x,y)=\sum^m_{l=1}\mathcal{T}^l_{p(y)}\mathcal{T}^l_{p(x)}\kappa= \kappa\cdot \sum^m_{l=1} \left[K^l_x\log(p \kappa)\cdot K^l_y\log(p \kappa) + K^l_y K^l_x\log \kappa\right].
\end{aligned}    
\end{equation}
In this section, we will present a surprising result namely that, the formula \eqref{kp-general} of $\kappa_p$ function can be further simplified into closed form expressions in most commonly-encountered homogeneous spaces, including:
\begin{itemize}
    \item the Stiefel manifolds $\mathcal{V}_r(N):=\{X\in\mathbb{R}^{N\times r}: X^\top X=I_{r\times r} \}$, inheriting the subspace topology from $\mathbb{R}^{N\times r}$, including the sphere $\mathcal{V}_1(N):=\mathbb{S}^{N-1}$, the special orthogonal group $\mathcal{V}_{N-1}(N)=\SO(N)$ and the orthogonal group $\mathcal{V}_{N}(N)=\Or(N)$. 
    \item the Grassman manifold $\mathcal{G}_r(N):=\{X\in\mathbb{R}^{N\times N}: X^\top=X, X^2=X, \tr(X)=r \}$, the space of all $r$-planes in $\mathbb{R}^N$, or equivalently, the space of orthogonal projections of $\mathbb{R}^N$ into $r$-dimensional subspaces, or equivalently, the space of $N$ by $N$ idempotent symmetric matrices with rank $r$. 
    \item $\mathcal{P}(N):\{X\in\mathbb{R}^{N\times N}: X^\top=X, X\succ 0\}$, the manifold of $(N,N)$ symmetric positive definite (SPD) matrices, inheriting the subspace topology from $\mathbb{R}^{N\times N}$.
\end{itemize}
Even more surprisingly, the MKSDE obtained by minimizing the empirical KSDs \eqref{U&V} and \eqref{Uw&Vw}, also has closed form expressions, if $p_\theta$ is the exponential family given by $
    p(x|\theta)\propto \exp(\theta^\top \zeta(x)+\eta(x) ),\quad \theta\in \mathbb{R}^{s},\  $
where $\zeta:=(\zeta_1,\dots,\zeta_s)^\top\in C^1(H,\mathbb{R}^s)$, $\eta\in C^1(H,\mathbb{R})$ for some $s\in\mathbb{N}_+$. We plug this into \eqref{kp-general} and get
\begin{equation}
\begin{aligned}
\kappa_p(x,y)&= \theta^\top\cdot \kappa\sum_{l=1}^m K^l_x\zeta\cdot K^l_{y}\zeta^\top\cdot\theta + \kappa\sum_{l=1}^m K^l_x\log (e^\eta\kappa) K^l_{y}\zeta^\top \theta\\
&+ \kappa\sum_{l=1}^m K^l_y\log (e^\eta\kappa) K^l_x\zeta^\top \theta+ c(x,y),
\end{aligned}  
\end{equation}
where $c(x,y)$ denote the terms independent of $p_\theta$ thus independent of its parameter $\theta$. Note that $K^l_x\zeta$ is a $s$-dimensional vector, thus $K^l_x\zeta\cdot K^l_y\zeta^\top$ is the matrix $(K^l_x\zeta_i\cdot K^l_y\zeta_j)_{i j}$. Furthermore, we let
\begin{equation}\label{Q&b}
Q(x,y)= \kappa\sum_{l=1}^m K^l_x\zeta\cdot K^l_{y}\zeta^\top,\quad b(x,y)= \kappa\sum_{l=1}^m K^l_x\log (e^\eta\kappa) \cdot K^l_y\zeta.
\end{equation}
Therefore, the empirical KSDs in \eqref{U&V} and (\ref{Uw&Vw}) will be quadratic forms of $\theta$:
\begin{equation}\label{Uw&Vw-exp}
U^w_n(\theta) = \theta^\top Q_u \theta + 2 b_u \theta +c,\quad V^w_n(\theta) = \theta^\top Q_v \theta + 2 b_v \theta +c,
\end{equation}
where
\begin{equation}\label{Qu&bu&Qv&bv}
\begin{aligned}
Q_u &:= \sum_{i\neq j}\frac{Q(x_i,x_j)}{n(n-1)} q^w(x_i) q^w(x_j),\quad b_u:= \sum_{i\neq j}\frac{ b(x_i,x_j) }{n(n-1)}q^w(x_i) q^w(x_j),\\
Q_v &:= \sum_{i,j}\frac{Q(x_i,x_j) }{n^2}q^w(x_i) q^w(x_j),\quad b_v:=\sum_{i,j}\frac{ b(x_i,x_j) }{n^2 }q^w(x_i) q^w(x_j).
\end{aligned}
\end{equation}
It is noteworthy that $Q_u$, $Q_v$ are both symmetric. Furthermore, $Q_v$ is always semi-positive definite, thus $V_n(\theta)$ can always attain its minimum values for exponential family, while $U_n$ can not. Additionally, $Q_u$ and $Q_v$ are not necessarily invertible, e.g. $p_\theta$ is not identifiable. In such cases, the global minimum point can be represented by the Moore–Penrose inverse $Q_u^+$ and $Q_v^+$ of $Q_u$ and $Q_v$. We summarize into the following theorem:

\begin{theorem}\label{MKSDE-Form-Thm} $Q_u$, $Q_v$ defined in \eqref{Qu&bu&Qv&bv} are both symmetric, and $Q_v$ is positive semi-definite. If further $Q_u$ is positive semi-definite, then the global minimizer sets of $U^w_n(\theta)$ and $V^w_n(\theta)$ in \eqref{Uw&Vw-exp} can be represented by the Moore-Penrose inverse $Q_u^+$, $Q_v^+$ of $Q_u$, $Q_v$ as follows
\begin{equation}\label{MKSDE-Form}
\begin{aligned}
\argmin U^w_n(\theta) = \{-Q_u^+ b_u - (I - Q_u^+ Q_u)x: x\in\mathbb{R}^s  \},\\
\argmin V^w_n(\theta) = \{-Q_v^+ b_v - (I - Q_v^+ Q_v)x: x\in\mathbb{R}^s  \}.
\end{aligned}
\end{equation}
For the unweighted case, we just ignore the weighted ratio $q^w(x_i) q^w(x_j)$ in \eqref{Qu&bu&Qv&bv}.
\end{theorem}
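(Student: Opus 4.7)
The proof splits naturally into three parts: symmetry of $Q_u$ and $Q_v$, positive semi-definiteness of $Q_v$, and the Moore--Penrose representation of the two minimiser sets. For the symmetry, I would start from \eqref{Q&b}: since $\kappa(x,y) = \kappa(y,x)$, a direct computation gives $Q(x,y)^\top = \kappa(x,y)\sum_l K^l_y \zeta\,(K^l_x \zeta)^\top = Q(y,x)$. Transposing the defining sums in \eqref{Qu&bu&Qv&bv} and relabelling $i \leftrightarrow j$ (the scalar weights $q^w(x_i)q^w(x_j)$ are symmetric in $(i,j)$) then immediately yields $Q_u^\top = Q_u$ and $Q_v^\top = Q_v$.

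For positive semi-definiteness of $Q_v$, I would expand the quadratic form and reorganise the sums to obtain
\begin{equation*}
\theta^\top Q_v \theta \;=\; \sum_{l=1}^m \frac{1}{n^2}\sum_{i,j} \kappa(x_i,x_j)\, u^l_i u^l_j,\qquad u^l_i := [K^l_{x_i}(\theta^\top \zeta)]\,q^w(x_i).
\end{equation*}
Each inner double sum is a quadratic form in the vector $(u^l_i)_i$ against the Gram matrix $(\kappa(x_i,x_j))_{ij}$, which is positive semi-definite because $\kappa$ is a positive-definite kernel; hence $\theta^\top Q_v \theta \geq 0$.

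For the Moore--Penrose formula, fix symmetric positive semi-definite $Q \in \{Q_u, Q_v\}$ with the corresponding $b$ and $c$ from \eqref{Uw&Vw-exp}. The quadratic $f(\theta) = \theta^\top Q \theta + 2 b^\top \theta + c$ is convex, so $\hat{\theta}$ is a global minimiser iff $\nabla f(\hat\theta) = 0$, i.e.\ $Q\hat\theta = -b$. Standard linear algebra for symmetric positive semi-definite $Q$ gives the orthogonal decomposition $\mathbb{R}^s = \mathrm{range}(Q) \oplus \ker(Q)$, with $Q^+ Q$ and $I - Q^+ Q$ the respective orthogonal projectors; hence whenever this system is consistent, its solution set equals $\{-Q^+ b + (I - Q^+ Q) x : x \in \mathbb{R}^s\}$, which is precisely the formula in \eqref{MKSDE-Form} (the sign in front of $(I - Q^+ Q)x$ is immaterial since $x$ ranges over all of $\mathbb{R}^s$).

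The main obstacle, and the subtle point, is attainment of the minimum, i.e.\ consistency of $Q\theta = -b$. For $V^w_n$, the reproducing-kernel identity
\begin{equation*}
V^w_n(\theta) \;=\; \Big\Vert \tfrac{1}{n}\sum_i q^w(x_i)\,\Vec{\phi}_{p_\theta}(x_i) \Big\Vert_{\mathcal{H}^m_\kappa}^2 \;\geq\; 0,
\end{equation*}
obtained exactly as in the derivation of Theorem \ref{KSDFormThm}, rules out divergence to $-\infty$; combined with $Q_v \succeq 0$ this forces $b_v \in \mathrm{range}(Q_v)$, so the minimum is attained and the $V^w_n$ formula holds unconditionally. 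For $U^w_n$ under the added hypothesis $Q_u \succeq 0$, the same convexity argument identifies $\argmin U^w_n$ with the (possibly empty) solution set of $Q_u\theta = -b_u$; the Moore--Penrose formula then applies whenever this set is non-empty, i.e.\ precisely when $b_u \in \mathrm{range}(Q_u)$, which is the implicit condition under which the $U^w_n$ half of the theorem is to be read. The unweighted case is then obtained by specialising $q^w \equiv 1$.
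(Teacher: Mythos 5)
Your proof is correct, and it mostly parallels the paper's, but it improves on it in two places.

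For positive semi-definiteness of $Q_v$, the paper first shows that the weighted Gram matrix $G_n := \big(\kappa(x_i,x_j)\,q^w(x_i)\,q^w(x_j)\big)_{ij}$ and the matrix $\Phi := \sum_l \Vec{\phi^l}\Vec{\phi^l}{}^\top$ are individually positive semi-definite, then invokes the Schur product theorem to conclude that the Hadamard product $G_n\circ\Phi$ is positive semi-definite, and finally writes $\theta^\top Q_v \theta = n^{-2}\,\mathbf{1}^\top(G_n\circ\Phi)\,\mathbf{1} \ge 0$. You reach the same conclusion by simply exchanging the order of summation: pushing the $\sum_l$ outside and absorbing the weights $q^w(x_i)$ into the vectors $u^l_i$ turns $\theta^\top Q_v\theta$ into a finite sum of quadratic forms against the plain Gram matrix $(\kappa(x_i,x_j))_{ij}$, each nonnegative because $\kappa$ is a positive-definite kernel. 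This avoids the Schur product theorem entirely and is a cleaner route to the same inequality.

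For the Moore--Penrose formula, the paper dismisses it with ``The rest of the theorem is straightforward.'' You actually fill the gap, and you are right to flag the subtlety: with $Q \succeq 0$ the convex quadratic $\theta\mapsto\theta^\top Q\theta + 2b^\top\theta + c$ attains a minimum iff $b\in\mathrm{range}(Q)$, in which case the stationary equation $Q\theta=-b$ has solution set $\{-Q^+b + (I-Q^+Q)x : x\in\mathbb{R}^s\}$. Your observation that the RKHS identity $V^w_n(\theta) = \big\Vert \tfrac1n\sum_i q^w(x_i)\,\Vec{\phi}_{p_\theta}(x_i)\big\Vert^2_{\mathcal{H}^m_\kappa}\ge 0$ forces $b_v\in\mathrm{range}(Q_v)$ is a genuine addition; the paper never argues attainment for $V^w_n$. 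Your reading of the $U^w_n$ case — that the theorem is implicitly stated under $b_u\in\mathrm{range}(Q_u)$, since otherwise $\argmin U^w_n$ is empty while the Moore--Penrose set is not — is the charitable and correct interpretation, and it would have been worth the paper stating this explicitly. Your remark that the sign in front of $(I-Q^+Q)x$ is immaterial is also correct and quietly repairs a cosmetic discrepancy between the paper's formula and the textbook parametrisation of the affine solution set. The symmetry argument matches the paper's exactly.
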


To obtain the different forms of MKSDE, it suffices to derive the closed form of $Q(x,y)$ and $b(x,y)$ in \eqref{Q&b} on different manifolds. In this section, we will explicitly calculate aforementioned closed forms of KSD and MKSDE on commonly-encountered homogeneous spaces, and provide multiple examples for specific families and specific choice of kernels. 

\subsection{Matrix Algebra}
\label{Mat-A}

In this section, since the sample space $M$ is taken to be a matrix manifold, we first introduce some background matrix algebra along with convenient notation that will be used subsequently in this section. We refer the reader to \cite{Graham} and \cite{magnus2019matrix} for more details on this topic.

\subsubsection*{Matrix Inner Product space}
The Frobenius inner product
$\langle A,B\rangle_{\F}:=\tr(A^\top B)=\sum_{i=1}^N\sum_{j=1}^r a_{ij} b_{ij},\quad \text{for } A=(a_{ij})\in \mathbb{R}^{N\times r} \text{ and } B=(b_{ij})\in \mathbb{R}^{N\times r},
$
is usually considered to be the canonical inner product on $\mathbb{R}^{N\times r}$. The Frobenius norm is given by $\Vert A\Vert_{\F}:=\sqrt{\langle A,A\rangle_{\F}}$. We let $\vectz(A)$ denote the vectorization of $A$ obtained by stacking the columns, and define then following functions 
$ \mathscr{S}(X)=(A+A^\top)/2,\quad \mathscr{A}(A)=(A-A^\top)/2,$
be the symmetrization and skew-symmetrization of a squared matrix $A\in\mathbb{R}^{N\times N}$. Let $\mathscr{S}(\mathbb{R}^{N\times N})$ and $\mathscr{A}(\mathbb{R}^{N\times N})$ be the linear subspaces of all symmetric and skew-symmetric $N\times N$ matrices respectively. It can be easily checked that they are the orthogonal complements of each other, and $X=\mathscr{S}(X)+\mathscr{A}(X)$
is the orthogonal decomposition of $X$ onto $\mathscr{S}(\mathbb{R}^{N\times N})$ and $\mathscr{A}(\mathbb{R}^{N\times N})$.

Let $E_{i j}$ be the matrix prescribed with all zeros everywhere except a $1$ at the $(i,j)^\text{th}$ entry. Specifically, when $r=1$, we set $e_i$ be the $N\times 1$ vector prescribed with all zeros except $1$ at the $i^\text{th}$ element. Let $\mathcal{E}_{i j}:=\left( \frac{\sqrt{2}}{2} E_{i j}-\frac{\sqrt{2}}{2}E_{j i}\right) $. Then following proposition is easy to verify: 

\begin{proposition}\label{summation}
$\{E_{i j}, 1\leq i,j\leq N\}$ and $\{\mathcal{E}_{i j},1\leq i<j\leq N\}$ are orthonormal bases of $\mathbb{R}^{N\times N}$ and $\mathscr{A}(\mathbb{R}^{N\times N})$ respectively, and for any $A,B\in\mathbb{R}^{N\times N}$ we have
\[
\sum_{i,j}\langle E_{i j}, A\rangle_{\F}\cdot \langle E_{i j},B\rangle_{\F}=\langle A,B\rangle_{\F},\quad \sum_{i<j}\langle \mathcal{E}_{i j}, A\rangle_{\F}\cdot \langle \mathcal{E}_{i j},B\rangle_{\F}=\langle\mathscr{A}(A)^\top, \mathscr{A}(B)\rangle_{\F}.
\]
\end{proposition}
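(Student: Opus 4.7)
The plan is to verify both basis claims and both summation identities by direct coordinate computation, treating the two basis families as Parseval-type expansions in the Frobenius inner product, and leveraging the fact that $\langle E_{ij}, A\rangle_{\F}$ simply reads off the $(i,j)$-entry $a_{ij}$.

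For the $E_{ij}$ family, I first compute $\langle E_{ij}, E_{kl}\rangle_{\F} = \tr(E_{ij}^\top E_{kl}) = \delta_{ik}\delta_{jl}$ directly from the single-nonzero-entry structure of $E_{ij}$. Since there are $N^2$ such matrices and $\dim \mathbb{R}^{N\times N} = N^2$, orthonormality plus cardinality give that $\{E_{ij}\}$ is an orthonormal basis. The first summation identity then reduces to the coordinate-sum formula in the very definition of the Frobenius inner product, i.e., $\sum_{i,j}a_{ij}b_{ij} = \langle A, B\rangle_{\F}$, which also follows from Parseval applied to this basis.

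For the $\{\mathcal{E}_{ij} : i<j\}$ family I expand $\langle \mathcal{E}_{ij}, \mathcal{E}_{kl}\rangle_{\F} = \tfrac{1}{2}\langle E_{ij}-E_{ji},\, E_{kl}-E_{lk}\rangle_{\F}$ by bilinearity and invoke the preceding step. Four delta-products appear; under the ordering constraints $i<j$ and $k<l$, the cross terms $\delta_{il}\delta_{jk}$ cannot simultaneously fire (they would force $i = l > k = j > i$), leaving only $\delta_{ik}\delta_{jl}$. A dimension count $\binom{N}{2} = \dim \mathscr{A}(\mathbb{R}^{N\times N})$ then promotes this orthonormal family to a basis of the skew-symmetric subspace.

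For the second summation identity, I compute $\langle \mathcal{E}_{ij}, A\rangle_{\F} = \tfrac{\sqrt{2}}{2}(a_{ij}-a_{ji})$. Since each $\mathcal{E}_{ij}$ is skew-symmetric, the orthogonal decomposition $A = \mathscr{S}(A) + \mathscr{A}(A)$ recorded in the text gives $\langle \mathcal{E}_{ij}, A\rangle_{\F} = \langle \mathcal{E}_{ij}, \mathscr{A}(A)\rangle_{\F}$, and likewise for $B$; thus the cross-sum only probes the skew parts. Applying Parseval within $\mathscr{A}(\mathbb{R}^{N\times N})$ using the basis from the previous step collapses $\sum_{i<j}\langle \mathcal{E}_{ij}, A\rangle_{\F}\langle \mathcal{E}_{ij}, B\rangle_{\F}$ to the Frobenius pairing of $\mathscr{A}(A)$ with $\mathscr{A}(B)$, which unpacks to $\tfrac{1}{2}\sum_{i<j}(a_{ij}-a_{ji})(b_{ij}-b_{ji})$. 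This is then matched against the stated right-hand side $\langle \mathscr{A}(A)^\top, \mathscr{A}(B)\rangle_{\F}$ by invoking $\mathscr{A}(A)^\top = -\mathscr{A}(A)$ to reconcile the sign/transpose convention. The proof is essentially bookkeeping, with no substantive analytic obstacle; the only place demanding care is this transpose placement in the stated identity, which I will handle by producing the coordinate form explicitly and matching it to the expression as written.
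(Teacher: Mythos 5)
Your computations are correct as far as they go, and this is surely the intended argument (the paper itself offers no proof, calling the proposition "easy to verify"): orthonormality of both families by the delta calculus, the first identity by Parseval, and the reduction of the second sum to $\tfrac{1}{2}\sum_{i<j}(a_{ij}-a_{ji})(b_{ij}-b_{ji})=\langle\mathscr{A}(A),\mathscr{A}(B)\rangle_{\F}$.

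The genuine problem is the final "reconciliation" step, which you defer rather than carry out — and which in fact cannot be carried out as described. You have correctly shown that the sum equals $\langle\mathscr{A}(A),\mathscr{A}(B)\rangle_{\F}$, whereas the stated right-hand side is $\langle\mathscr{A}(A)^\top,\mathscr{A}(B)\rangle_{\F}=\tr(\mathscr{A}(A)\mathscr{A}(B))=-\langle\mathscr{A}(A),\mathscr{A}(B)\rangle_{\F}$. Invoking $\mathscr{A}(A)^\top=-\mathscr{A}(A)$ does not "reconcile the sign/transpose convention"; it makes the sign discrepancy explicit. So either your proof fails at its last line, or — as is actually the case — the proposition as printed contains a typo and the transpose should not be there. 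The paper's own subsequent uses settle this: in the Stiefel derivation the sum $\sum_{i<j}\langle\mathcal{E}_{ij},\nabla^{\mathbb{R}}_X\log(p\kappa)X^\top\rangle_{\F}\langle\mathcal{E}_{ij},\nabla^{\mathbb{R}}_Y\log(p\kappa)Y^\top\rangle_{\F}$ is evaluated as $\langle\mathscr{A}(\cdot),\mathscr{A}(\cdot)\rangle_{\F}$ with no transpose, and in the Grassmann computation the sum of $(II)$ over $i<j$ is asserted to be $-\Vert XY-YX\Vert^2_{\F}$, which is $\langle\mathscr{A}(YX-XY),\mathscr{A}(XY-YX)\rangle_{\F}$, again the untransposed form (the transposed form would give $+\Vert XY-YX\Vert^2_{\F}$). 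You should state plainly that the identity you have proved is $\sum_{i<j}\langle\mathcal{E}_{ij},A\rangle_{\F}\langle\mathcal{E}_{ij},B\rangle_{\F}=\langle\mathscr{A}(A),\mathscr{A}(B)\rangle_{\F}$ and flag the transpose in the statement as an error, rather than promising a coordinate match to an expression that is the negative of what the sum equals.
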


\subsubsection{Gradient of functions on a matrix manifold}\label{Grad-Mat}

Suppose $M$ is a sub-manifold of $\mathbb{R}^{N\times r}$, i.e., a matrix manifold whose elements are $N\times r$ matrices, and $M$ is endowed with the Riemannian metric $g_{X}(\cdot,\cdot)$ for $X\in M$.  

For a real-valued function $f$ on $M$, the \emph{Riemannian gradient} of $f$ at $X$ is defined as the tangent vector $\nabla^M_X f\in T_X M $, such that $D_X f=g_X(D_X,\nabla^M_X f) $ for all tangent vectors $D_X\in T_X M$. Analogously, since tangent space $T_X M$ at each point $X\in M$ is a linear subspace of $\mathbb{R}^{N\times r}$, we may define the \emph{Euclidean gradient} of $f$ w.r.t the Frobenius inner product $\langle\cdot,\cdot\rangle_{\F}$, i.e., a matrix $\nabla^\mathbb{R}_X f\in\mathbb{R}^{N\times r}$ such that $D_X f=\langle \nabla^\mathbb{R}_X f, D_X \rangle_{\F}$ for all $ D_X\in T_X M\subset \mathbb{R}^{N\times r}$. 

Note that $\nabla^\mathbb{R}_X f$ is not necessarily an element in $T_X M$, thus the Euclidean gradient is not unique in general. For example, consider function $f(x) = \mu^\top x$ for $x$ on the sphere $\mathbb{S}^{N-1}$. Since the Riemannian gradient must lie in $T_x\mathbb{S}^{N-1}$, we have  $\nabla^M_x f=\mu-(\mu^\top x)x$, the orthogonal projection of $\mu$ onto the $T_x\mathbb{S}^{N-1}$. On the other hand, as $x\perp T_x\mathbb{S}^{N-1}$, the Euclidean gradient $\nabla^\mathbb{R}_x f$ can be any vector with the form $\mu-\alpha x$, $\alpha\in\mathbb{R}$.

In this work, we address this notion only for computational and representational convenience, but it is not widely used in other works due to non-uniqueness. As different choices of $\nabla^\mathbb{R}_X f$ will deliver the same value of $\langle \nabla^\mathbb{R}_X f, D_X \rangle_{\F}$ for any $D_X\in T_X M$, and they only appear inside the inner product bracket $\langle\cdot,\cdot\rangle_{\F}$ in this work, the non-uniqueness will not influence any specific calculation that follows in this section.

\subsection{Stiefel Manifold \texorpdfstring{$\mathcal{V}_r(N) $}{Vr(N)} (Special Cases: \texorpdfstring{$\mathbb{S}^{N-1} $}{S(N-1)}, \texorpdfstring{$\SO(N)$}{SO(N)} and \texorpdfstring{$\Or(N)$}{O(N)}) }

We refer the readers to \cite{absil2008optimization,edelman1998geometry} for a detailed discussion on the definition and geometry of the Stiefel manifold. It is known that $\Or(N)$ acts transitively on $\mathcal{V}_r(N)$ and the isometry corresponding to each $O\in\Or(N)$ is $X\mapsto O.X:=OX$ for $X\in \mathcal{V}_r(N)$. The tangent space of $\Or(N)$ at identity is $\mathfrak{o}(N)=\mathscr{A}(\mathbb{R}^{N\times N})$, the space of all skew-symmetric $N\times N$-matrices, where $\{\mathcal{E}_{i j}:1\leq i<j\leq N\}$ is an orthogonal basis as introduced in \S\ref{KSD-Homogeneous}. We take local curves $O_{ij}(t)$ at identity corresponding to each $\mathcal{E}_{i j}$, i.e., $\frac{d}{d t}O_{i j}(t)|_{t=0}=\mathcal{E}_{i j}$, then the killing field corresponding to each $\mathcal{E}_{i j}$ is $K^{i j}_X:=\frac{d}{d t}O_{i j}(t)|_{t=0}.X=\frac{d}{d t}O_{i j}(t)|_{t=0}X=\mathcal{E}_{i j}X$, i.e., the vector field $K^{i j}$ that assigns each $X$ with tangent vector $\mathcal{E}_{i j}X\in T_X\mathcal{V}_r(N)$. We plug this into \eqref{kp-general} so that the $k_p$ function on Stiefel manifolds equals
\[
\kappa_p(X,Y)= \kappa\cdot \sum_{i<j} K^{i j}_X\log(p \kappa)\cdot K^{i j}_Y\log(p \kappa) + \kappa\cdot\sum_{i<j}K^{i j}_Y K^{i j}_X\log \kappa.
\]
The first term can be written in a closed form:
\[
\begin{aligned}
\kappa\cdot\sum_{i<j} K^{i j}_X\log(p \kappa)\cdot K^{i j}_Y\log(p \kappa) &=\kappa\cdot\sum_{i<j}\tr[X^\top\mathcal{E}^\top_{i j} \nabla^\mathbb{R}_X\log(p \kappa) ]\cdot\tr[Y^\top\mathcal{E}^\top_{i j} \nabla^\mathbb{R}_Y\log(p \kappa) ]
\\
&=\kappa\cdot\sum_{i<j} \langle \mathcal{E}_{i j}, \nabla^\mathbb{R}_X \log(p \kappa) X^\top\rangle_{\F} \cdot \langle \mathcal{E}_{i j}, \nabla^\mathbb{R}_Y \log (p\kappa) Y^\top\rangle_{\F} \\ \text{Based on Prop. \ref{summation} }\quad\quad
&= \kappa\cdot\langle\mathscr{A}(\nabla^{\mathbb{R}}_X \log (p \kappa) X^\top), \mathscr{A}(\nabla^\mathbb{R}_Y \log (p \kappa) Y^\top) \rangle_{\F}.
\end{aligned}
\]
The summation of $K^{i j}_Y K^{i j}_X\log \kappa$ relies on the specific form of $\kappa$ and thus has no closed form in general, which can not be further simplified. However, if $\kappa$ is a radial kernel, i.e., 
\[
\kappa(X,Y)=\exp(-\psi(\Vert X-Y\Vert^2_{\F}))=\exp(-\psi(2r-2\tr[X^\top Y])),\quad X,Y\in\mathcal{V}_r(N),
\]
for some $\psi\in C^2[0,+\infty)$, 
then $K^{i j}_Y K^{i j}_X\log \kappa$ equals
\[
\begin{aligned}
&K^{i j}_Y K^{i j}_X\log \kappa=-\frac{d^2}{d t dt'}\psi(2r-2\tr[X^\top O_{i j}^\top(t) O_{i j}(t')Y])|_{t,t'=0}\\
= &\underbrace{2 \psi'(\Vert X-Y\Vert^2_{\F})\cdot \tr[X^\top \mathcal{E}_{i j}^\top  \mathcal{E}_{i j}Y]}_{(i)}  
\underbrace{- 4\psi''(\Vert X-Y\Vert^2_{\F})\cdot \tr[X^\top\mathcal{E}^\top_{i j} Y]\cdot \tr[X^\top\mathcal{E}_{i j} Y]}_{(ii)}. 
\end{aligned}
\]
Note that  $\mathcal{E}_{i j}^\top \mathcal{E}_{i j}= \frac{1}{2}E_{i i}+\frac{1}{2}E_{j j}$, thus $\sum_{i<j} \mathcal{E}_{i j}^\top \mathcal{E}_{i j}=\frac{N-1}{2} I$. Therefore, the summation of $(i)$ over $i<j$ equals $
(N-1) \psi'(\Vert X-Y\Vert^2_{\F})\langle X, Y\rangle_{\F}$. The summation of $(ii)$ follows the same mechanics as in the Prop. \ref{summation}, which equals $ 4\psi''(\Vert X-Y\Vert^2_{\F})\Vert\mathscr{A}(XY^\top)\Vert^2_{\F} $.
We summarize into following theorem:

\begin{theorem} Given density $p$ and kernel $\kappa$, the $\kappa_p$ function on $\mathcal{V}_r(N)$ is given by
\begin{equation}\label{kp-Stiefel}
\begin{aligned}
\kappa_p(X,Y)&=\langle\mathscr{A}(\nabla^{\mathbb{R}}_X \log (p \kappa) X^\top), \mathscr{A}(\nabla^\mathbb{R}_Y \log (p \kappa) Y^\top) \rangle_{\F}\cdot \kappa(X,Y)\\
&+\cdot\sum_{i<j} K^{i j}_Y K^{i j}_X\log \kappa\cdot \kappa(X,Y).
\end{aligned}
\end{equation}
Furthermore, if $\kappa(X,Y)=e^{-\psi(\Vert X-Y\Vert^2_{\F})}$ is a radial kernel for some $\psi\in C^2[0,+\infty)$, then 
\begin{equation}\label{kp-Stiefel-radial}
\begin{aligned}
\kappa_p(X,Y)&=\langle\mathscr{A}(\nabla^{\mathbb{R}}_X \log (p \kappa) X^\top), \mathscr{A}(\nabla^\mathbb{R}_Y \log (p \kappa) Y^\top) \rangle_{\F} \cdot \kappa(X,Y)\\
& + (N-1) \psi'(\Vert X-Y\Vert^2_{\F})\langle X, Y\rangle_{\F}\cdot \kappa(X,Y)\\
&+4\psi''(\Vert X-Y\Vert^2_{\F})\Vert\mathscr{A}(XY^\top)\Vert^2_{\F}\cdot \kappa(X,Y).
\end{aligned}
\end{equation}
\end{theorem}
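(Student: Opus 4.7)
The plan is to substitute the explicit killing fields $K^{ij}_X = \mathcal{E}_{ij} X$ into the general homogeneous-space formula \eqref{kp-general} for $\kappa_p$, and then collapse the resulting double sums via Proposition \ref{summation}. First I would identify the killing fields: the recipe of \S\ref{KSD-Homogeneous} applied to the transitive action $O.X = OX$ of $\Or(N)$ on $\mathcal{V}_r(N)$ sends each basis vector $\mathcal{E}_{ij}$ of $\mathfrak{o}(N) = \mathscr{A}(\mathbb{R}^{N\times N})$ to $K^{ij}_X = \tfrac{d}{dt}|_{t=0} O_{ij}(t) X = \mathcal{E}_{ij} X$. Plugging these into \eqref{kp-general} already yields the displayed pre-closed-form for $\kappa_p$ in the statement, so the content of the theorem reduces to simplifying the two sums on the right-hand side.

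For the first summand, I would use the definition of the Euclidean gradient of \S\ref{Grad-Mat} together with the cyclic property of the trace to write, for any smooth $h$,
\[
K^{ij}_X h = \langle \mathcal{E}_{ij} X,\, \nabla^{\mathbb{R}}_X h \rangle_{\F} = \langle \mathcal{E}_{ij},\, \nabla^{\mathbb{R}}_X h \cdot X^\top \rangle_{\F}.
\]
Applying this with $h = \log(p\kappa)$ at both $X$ and $Y$, the sum $\sum_{i<j} K^{ij}_X \log(p\kappa)\cdot K^{ij}_Y \log(p\kappa)$ takes the form $\sum_{i<j} \langle \mathcal{E}_{ij}, A\rangle_{\F}\langle \mathcal{E}_{ij}, B\rangle_{\F}$ with $A = \nabla^{\mathbb{R}}_X \log(p\kappa) X^\top$ and $B = \nabla^{\mathbb{R}}_Y \log(p\kappa) Y^\top$, which by the second identity of Proposition \ref{summation} collapses onto a Frobenius inner product of skew-symmetrizations. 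Multiplying by $\kappa(X,Y)$ produces the first term of \eqref{kp-Stiefel}.

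For the radial-kernel refinement, I would rewrite $\log \kappa = -\psi(2r - 2\tr[X^\top Y])$ and compute
\[
K^{ij}_Y K^{ij}_X \log \kappa = -\left.\frac{\partial^2}{\partial t\,\partial t'}\psi\bigl(2r - 2\tr[X^\top O_{ij}^\top(t) O_{ij}(t') Y]\bigr)\right|_{t=t'=0}.
\]
The chain rule splits this into a $\psi'$ piece, coming from $\tfrac{\partial^2}{\partial t\partial t'}[O_{ij}^\top(t) O_{ij}(t')]|_0 = \mathcal{E}_{ij}^\top \mathcal{E}_{ij}$, and a $\psi''$ piece, coming from the product of two first derivatives of the argument. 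Summing $\mathcal{E}_{ij}^\top \mathcal{E}_{ij} = \tfrac12(E_{ii}+E_{jj})$ over $i<j$ gives $\tfrac{N-1}{2} I$, so the $\psi'$ contribution aggregates to $(N-1)\psi'(\Vert X-Y\Vert_{\F}^2)\langle X, Y\rangle_{\F}$. Each factor in the $\psi''$ term equals $\langle \mathcal{E}_{ij}, XY^\top\rangle_{\F}$ up to a sign, so a second application of Proposition \ref{summation} produces $4\psi''(\Vert X-Y\Vert_{\F}^2)\Vert\mathscr{A}(XY^\top)\Vert_{\F}^2$. Combined with the first summand, this yields \eqref{kp-Stiefel-radial}.

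The main technical care is bookkeeping the signs coming from $\mathcal{E}_{ij}^\top = -\mathcal{E}_{ij}$ (which, combined with the $(-)$ in $\log \kappa = -\psi(\cdot)$ and the two factors of $-2$ from the inner derivative of $2r - 2\tr[\cdot]$, conspire to give the correct $+$ sign in front of the $\psi''$ term) and making sure that at every invocation of $\nabla^{\mathbb{R}}$ only its Frobenius pairing with a tangent vector is used — so that the non-uniqueness flagged in \S\ref{Grad-Mat} never affects the final expression. No subtle geometric input beyond Proposition \ref{summation} and the two-parameter chain rule is needed; the closed form falls out purely by linear algebra once the killing fields are in hand.
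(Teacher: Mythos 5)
Your plan is correct and reproduces the paper's own derivation step for step: identify the killing fields $K^{ij}_X = \mathcal{E}_{ij}X$ from the transitive $\Or(N)$-action, rewrite $K^{ij}_X h = \langle \mathcal{E}_{ij}, \nabla^\mathbb{R}_X h\, X^\top\rangle_{\F}$ via cyclicity of the trace, and collapse the $i<j$ sums with Proposition \ref{summation}; for the radial case, apply the two-parameter chain rule to $-\psi(2r-2\tr[X^\top O_{ij}^\top(t)O_{ij}(t')Y])$ and use $\sum_{i<j}\mathcal{E}_{ij}^\top\mathcal{E}_{ij}=\tfrac{N-1}{2}I$ for the $\psi'$ term and a second use of Proposition \ref{summation} for the $\psi''$ term. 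Your explicit flagging of the sign chase ($\mathcal{E}_{ij}^\top=-\mathcal{E}_{ij}$, the overall $-\psi$, the two factors of $-2$) is exactly the bookkeeping that makes the $+4\psi''\Vert\mathscr{A}(XY^\top)\Vert_{\F}^2$ come out right, and you correctly note that only Frobenius pairings of $\nabla^\mathbb{R}$ with tangent vectors ever appear, so the non-uniqueness of the Euclidean gradient is harmless.
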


Next we explicitly calculate the closed form of MKSDE for the exponential family on $\mathcal{V}_r(N)$. It suffices to calculate the matrix $Q(X,Y)=k\sum_{i<j} (K^{i j}_X\zeta_k\cdot K^{i j}_Y\zeta_l)_{k l}$ and the vector $b(X,Y)=\kappa\cdot\sum_{i<j}[K^{ij}_X\eta+K^{i j}_X\log\kappa]K^{i j}_Y\zeta
$ introduced in (\ref{Q&b}). For $Q(X,Y)$, we have
\[
\begin{aligned}
\sum_{i<j}K^{i j}_X\zeta_k&\cdot K^{i j}_Y\zeta_l= \sum_{i<j} \tr[(\mathcal{E}_{i j} X)^\top\nabla^\mathbb{R}_X\zeta_k]\cdot\tr[(\mathcal{E}_{i j} Y)^\top\nabla^\mathbb{R}_Y\zeta_l] \\
&= \sum_{i<j}\langle \mathcal{E}_{i j},\nabla^\mathbb{R}_X\zeta_k X^\top   \rangle_{\F}\cdot \langle \mathcal{E}_{i j},\nabla^\mathbb{R}_Y\zeta_l Y^\top   \rangle_{\F} = \langle \mathscr{A}(\nabla^\mathbb{R}_X\zeta_k X^\top), \mathscr{A}(\nabla^\mathbb{R}_Y\zeta_l Y^\top)\rangle_{\F},
\end{aligned}
\]
Similarly, for $b(X,Y)$, we have
\[
\sum_{i<j}K^{i j}_X\log(e^\eta\kappa) \cdot K^{i j}_Y\zeta_k 
=\langle \mathscr{A}(\nabla^\mathbb{R}_X\log(e^\eta\kappa)X^\top),\mathscr{A}(\nabla^\mathbb{R}_Y\zeta_k Y^\top)\rangle_{\F}.
\]
To sum up, we have

\begin{theorem}[MKSDE for exponential family] Given  $p_\theta\propto\exp(\theta^\top\zeta(X)+\eta(X))$, let $Q(X,Y)$ and $b(X,Y)$ be the matrix and vector given by
\begin{equation}
\begin{aligned}
Q(X,Y)_{kl}&=\langle \mathscr{A}(\nabla^\mathbb{R}_X\zeta_k X^\top), \mathscr{A}(\nabla^\mathbb{R}_Y\zeta_l Y^\top)\rangle_{\F}\cdot \kappa(X,Y),\\
b(X,Y)_k&=\langle \mathscr{A}(\nabla^\mathbb{R}_X\log(e^\eta\kappa)X^\top),\mathscr{A}(\nabla^\mathbb{R}_Y\zeta_k Y^\top)\rangle_{\F}\cdot \kappa(X,Y), 
\end{aligned}
\end{equation}
Then the MKSDE can be computed via \eqref{MKSDE-Form}.
\end{theorem}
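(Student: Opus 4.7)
The plan is to derive the closed forms of $Q(X,Y)$ and $b(X,Y)$ directly from the general definitions in \eqref{Q&b} by specializing to the killing field basis $K^{ij}_X=\mathcal{E}_{ij}X$ on the Stiefel manifold, and then invoke Thm. \ref{MKSDE-Form-Thm} to get the final form of the MKSDE. Since Thm. \ref{MKSDE-Form-Thm} already packages the Moore--Penrose formulas, the only new content to verify is the explicit entrywise expression for $Q(X,Y)_{kl}$ and $b(X,Y)_k$.

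First I would recall that \eqref{Q&b} gives $Q(X,Y)_{kl}=\kappa(X,Y)\sum_{i<j} K^{ij}_X\zeta_k\cdot K^{ij}_Y\zeta_l$ and $b(X,Y)_k=\kappa(X,Y)\sum_{i<j} K^{ij}_X\log(e^\eta\kappa)\cdot K^{ij}_Y\zeta_k$, where the sum runs over the basis indices $i<j$ of $\mathfrak{o}(N)$. Then I would translate each directional derivative into Frobenius-inner-product form: by the very definition of the Euclidean gradient in \S\ref{Grad-Mat},
\[
K^{ij}_X \zeta_k = \langle \mathcal{E}_{ij}X,\nabla^{\mathbb{R}}_X\zeta_k\rangle_{\F} = \tr\bigl[(\mathcal{E}_{ij}X)^\top \nabla^{\mathbb{R}}_X\zeta_k\bigr] = \langle \mathcal{E}_{ij},\,\nabla^{\mathbb{R}}_X\zeta_k\, X^\top\rangle_{\F},
\]
via the cyclic property of the trace, and similarly for $K^{ij}_Y\zeta_l$, $K^{ij}_X\log(e^\eta\kappa)$, $K^{ij}_Y\zeta_k$.

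Next I would apply Proposition \ref{summation}, which is precisely the tool that sums products $\langle \mathcal{E}_{ij},A\rangle_{\F}\cdot\langle \mathcal{E}_{ij},B\rangle_{\F}$ over $i<j$ into an inner product of the skew-symmetric parts $\mathscr{A}(A)$ and $\mathscr{A}(B)$. Substituting $A=\nabla^{\mathbb{R}}_X\zeta_k\, X^\top$ and $B=\nabla^{\mathbb{R}}_Y\zeta_l\, Y^\top$ yields the claimed formula for $Q(X,Y)_{kl}$; substituting $A=\nabla^{\mathbb{R}}_X\log(e^\eta\kappa)\, X^\top$ and $B=\nabla^{\mathbb{R}}_Y\zeta_k\, Y^\top$ yields the claimed formula for $b(X,Y)_k$. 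Multiplying by $\kappa(X,Y)$ as in \eqref{Q&b} completes the identification. Finally, since $Q_u$, $Q_v$, $b_u$, $b_v$ defined in \eqref{Qu&bu&Qv&bv} are just empirical averages of $Q(X,Y)$ and $b(X,Y)$ against the samples, Thm. \ref{MKSDE-Form-Thm} immediately gives the MKSDE via \eqref{MKSDE-Form}.

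The computation is essentially routine given the pattern already established in the derivation of \eqref{kp-Stiefel}; the only place requiring a bit of care is the algebraic rearrangement $\tr[(\mathcal{E}_{ij}X)^\top\nabla^{\mathbb{R}}\zeta_k]=\langle\mathcal{E}_{ij},\nabla^{\mathbb{R}}\zeta_k X^\top\rangle_{\F}$ so that the matrices against which $\mathcal{E}_{ij}$ is paired have the right shape for Prop. \ref{summation} to apply, and the fact that this projection produces the skew-symmetrization $\mathscr{A}(\cdot)$ exactly absorbs the skew-symmetric structure of the $\mathcal{E}_{ij}$ basis. No further technical obstruction is anticipated.
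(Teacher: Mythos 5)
Your proposal is correct and follows essentially the same route as the paper's own derivation: write each directional derivative $K^{ij}_X\zeta_k$ via the Euclidean gradient and the Frobenius pairing, use the cyclic property of the trace to isolate $\mathcal{E}_{ij}$ on one side, apply Proposition~\ref{summation} to collapse the sum over $i<j$ into an inner product of skew-symmetric parts, and then invoke Theorem~\ref{MKSDE-Form-Thm}. Nothing is missing; the argument is complete.
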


Next, we calculate several examples for specific radial kernel $\kappa$ and density $p$ on Stiefel manifold. To obtain the KSD in \eqref{kp-Stiefel-radial}, it suffices to compute $\nabla^\mathbb{R}_X\log \kappa$ , $\nabla^\mathbb{R}_X\log p$ and $\psi'$ and $\psi''$. To obtain the MKSDE of exponential family $p(X)\propto\exp(\theta^\top \zeta(X)+\eta(X))$ in \eqref{MKSDE-Form}, we only need to compute $\nabla^\mathbb{R}_X\zeta_i $ and $\nabla^\mathbb{R}_X\eta $, to construct the  

\subsubsection*{Commonly-used distributions} Most of the widely-used distribution families on Stiefel manifold $\mathcal{V}_r(N)$ have intractable normalizing constant, including:
\begin{itemize}
\item Matrix Fisher (MF) family: $p(X;F)\propto \exp[\tr(F^\top X)]$ with parameter $F\in\mathbb{R}^{N\times r }$, which belongs to the exponential family since $\log p=\tr(F^\top X)$ is linear in $F$. We have $\nabla^\mathbb{R}_X\log p = F $ and $\nabla^\mathbb{R}_X\eta=0$. Note that $\zeta(X)=X$, thus the Euclidean gradient of its $(i,j)^{\text{th}}$ component $\zeta_{i j}=\tr[E_{i j}^\top X]$ is $\nabla^\mathbb{R}_X \zeta_{i j}= E_{i j}\in\mathbb{R}^{N\times r}$. 
    \item Matrix Bingham (MB) family: $p(X;A)\propto \exp[\tr(X^\top A X)]$ with parameter $A\in \mathbb{R}^{N\times N}$, which belongs to the exponential family since $\log p=\tr[(X X^\top)^\top A]$ is linear in $A$. We have $\nabla^\mathbb{R}_X\log p=(A+\mathscr{S}(A)) X$ and $\nabla^\mathbb{R}_X\eta=0$. Note that $\zeta(X)=X X^\top$, thus its $(i,j)^{\text{th}}$ component $\zeta_{i j}=\tr[E_{i j}^\top (X X^\top)]$. Here $E_{i j}\in\mathbb{R}^{N\times N}$. Therefore, $\nabla^\mathbb{R}_X\zeta_{i j}=(E_{i j}+E_{j i})X$. The MB family is not identifiable, since any $A_1$, $A_2$ such that $\mathscr{S}(A_1)=\mathscr{S}(A_2)$ will correspond to the same distribution.
\item Matrix Fisher-Bingham (MFB) family: $p(X;A,F)\propto \exp[\tr(X^\top A X+ F^\top X)]$ with parameter $(A,F)\in\mathbb{R}^{N\times (N+r)}$, combining $A$ and $F$ by row, which belongs to the exponential family since $\log p=\tr(X^\top A X+ F^\top X)$ is linear in $(A,F)$. We have $\nabla^\mathbb{R}_X\log p= (A+\mathscr{S}(A)) X + F $, $\nabla^\mathbb{R}_X \eta=0$. Note that $\zeta(X)=(XX^\top,X)$, thus $\nabla^\mathbb{R}_X \zeta_{i j}=(E^N_{i j}+E^N_{j i})X$ for $1\leq j\leq N$, and $\nabla^\mathbb{R}_X \zeta_{i j}=E^r_{i j}$ for $N+1\leq j\leq N+r$, where $E^N_{i j}\in\mathbb{R}^{N\times N}$ and $E^r_{i j}\in\mathbb{R}^{N\times r}$. The MFB family is also non-identifiable.
    \item Riemannian Gaussian(RG) family: $p(X)\propto \exp(-\frac{d^2(X,\Bar{X})}{2\sigma^2})$ with parameters $\Bar{X}\in\mathcal{V}_r(N)$, $\sigma>0$. The RG family is not a member of the exponential family. For Riemannian Gaussian family, $\log p(X)=-\frac{1}{2\sigma^2}d^2(X,\Bar{X})$.  The Riemannian gradient of $d^2$ function is $-2\Log_X(\Bar{X})$
\cite{pennec2017hessian}, the Riemannian gradient of $\log p(X)$ is $\sigma^{-2}\Log_X(\Bar{X})$. Here $\Log$ is the Riemannian logarithm on $\mathcal{V}_r(N)$, which can be computed numerically \cite{zimmermann2017matrix}. It is shown in \cite[\S 2.4.1]{edelman1998geometry}
that the Riemannian metric on $\mathcal{V}_r(N)$ is given by $\langle D_1,D_2\rangle_X= \tr\big(D_1^\top (I-\frac{1}{2} X X^\top) D_2\big)$, thus $\nabla^\mathbb{R}_X\log p=\sigma^{-2} (I-\frac{1}{2}X X^\top)\Log_X\Bar{X}$.
\end{itemize}
\subsubsection*{Commonly-used kernels} The most widely-used kernel on $\mathcal{V}_r(N)$ includes:
\begin{itemize}
    \item Gaussian kernel:
    \[
    \kappa(X,Y)=\exp\left( -\frac{\tau}{2}\Vert X-Y\Vert^2_{\F} \right)\propto \exp(\tau \tr(X^\top Y)),\quad X,Y\in \mathcal{V}_r(N).
    \]
    Note that $\nabla^\mathbb{R}_X \log \kappa= \tau Y $.
    \item Inverse quadratic kernel:
    \[
    \kappa(X,Y)=(\beta+\Vert X-Y\Vert^2_{\F})^{-\gamma},\quad X,Y\in\mathcal{V}_r(N).
    \]
    Note that $\nabla^\mathbb{R}_X \log \kappa=\frac{2\gamma}{\beta+\Vert X-Y\Vert^2_{\F}} Y$.
\end{itemize}

\subsection{Grassmann Manifold \texorpdfstring{$\mathcal{G}_r(N)$}{Gr(N)} }

 We refer the readers to \cite{edelman1998geometry} for the detailed definition and geometry of Grassmann manifold. It is known that $\Or(N)$ acts transitively on $\mathcal{G}_r(N)$, and the isometry corresponding to each $O\in \Or(N)$ is $X\mapsto O.X= O X O^\top$. The tangent space of $\Or(N)$ at identity is $\mathfrak{o}(N)=\mathscr{A}(\mathbb{R}^{N\times N})$, the space of all skew-symmetric $N\times N$-matrices, where $\{\mathcal{E}_{i j}:1\leq i<j\leq N\}$ is an orthogonal basis as introduced in \S\ref{KSD-Homogeneous}. We take local curves $O_{ij}(t)$ at identity corresponding to each $\mathcal{E}_{i j}$, i.e., $\frac{d}{d t}O_{i j}(t)|_{t=0}=\mathcal{E}_{i j}$, then the killing field corresponding to each $\mathcal{E}_{i j}$ is $K^{i j}_X:=\frac{d}{d t}O_{i j}(t)|_{t=0}.X=\frac{d}{d t}O_{i j}(t)X O^\top_{i j}(t)|_{t=0}=\mathcal{E}_{i j} X-X\mathcal{E}_{i j}$, i.e., the vector field $K^{i j}$ that assigns each $X$ with tangent vector $\mathcal{E}_{i j} X-X\mathcal{E}_{i j}\in T_X\mathcal{G}_r(N)$. We plug this into \eqref{kp-general} so that the $\kappa_p$ function on $\mathcal{G}_r(N)$ equals
\[
\kappa_p(X,Y)=\kappa\cdot \sum_{i<j} K^{i j}_X\log(p \kappa)\cdot K^{i j}_Y\log(p \kappa) + \kappa\cdot\sum_{i<j}K^{i j}_Y K^{i j}_X\log \kappa.
\]
Then the first term can be written in a closed form:
\[
\begin{aligned}
&\kappa\cdot \sum_{i<j} K^{i j}_X\log(p \kappa)\cdot K^{i j}_Y\log(p \kappa)\\
&=\kappa\cdot \sum_{i<j} \tr[(\mathcal{E}_{i j} X-X\mathcal{E}_{i j})\nabla^\mathbb{R}_X\log(p \kappa)]\cdot \tr[(\mathcal{E}_{i j} Y-Y\mathcal{E}_{i j})\nabla^\mathbb{R}_Y\log(p \kappa)]\\
&= 4 \kappa\cdot\sum_{i<j} \langle \mathcal{E}_{i j}, \mathscr{S}(\nabla^\mathbb{R}_X\log(p \kappa)) X\rangle_{\F}\cdot\langle \mathcal{E}_{i j}, \mathscr{S}(\nabla^\mathbb{R}_Y\log(p \kappa)) Y\rangle_{\F}\\
\text{Based on Prop. \ref{summation}}\ 
&= 4 \kappa \langle\mathscr{A}[\mathscr{S}(\nabla^\mathbb{R}_X\log(p \kappa)) X], \mathscr{A}[\mathscr{S}(\nabla^\mathbb{R}_Y\log(p \kappa)) Y]\rangle_{\F}.
\end{aligned}
\]
The summation of $K^{i j}_Y K^{i j}_X\log k$ relies on the specific form of $\kappa$ and thus has no closed form in general, which can not be further simplified. However, if $\kappa$ is a radial kernel, i.e., 
\[
\kappa(X,Y)=\exp(-\psi(\Vert X-Y\Vert^2_{\F}))=\exp(-\psi(2r-2\tr[XY])),\quad X,Y\in\mathcal{G}_r(N),
\]
for some $\psi\in C^2[0,+\infty)$. Then we have
\[
\begin{aligned}
K^{i j}_Y K^{i j}_X\log \kappa &=-\frac{d^2}{d t dt'}\psi(2r-2\tr[O_{ij}(t) X O_{i j}^\top(t) O_{i j}(t')Y O_{i j}^\top(t')])|_{t,t'=0}\\
&= 2 \psi'(\Vert X-Y\Vert^2_{\F})\cdot\underbrace{\tr[( \mathcal{E}_{i j} X-X \mathcal{E}_{i j})(\mathcal{E}_{i j}Y-Y\mathcal{E}_{i j})]}_{(I)}\\
&- 4\psi''(\Vert X-Y\Vert^2_{\F})\cdot\underbrace{ \langle YX-XY,\mathcal{E}_{i j} \rangle_{\F}\cdot \langle XY-YX,\mathcal{E}_{i j} \rangle_{\F}}_{(II)}.
\end{aligned}
\]
Due to Prop. \ref{summation}, the summation of $(II)$ over $i<j$ equals $-\Vert XY-YX\Vert^2_{\F}$, as we note that $XY-YX$ is skew-symmetric itself. For $(I)$, note that $(I)=\tr[2 X\mathcal{E}_{i j} Y\mathcal{E}_{i j}- (XY+YX)\mathcal{E}_{i j}\mathcal{E}_{i j}]$. 
We set $X=(x_{i j})$ and $Y=(y_{i j})$ and then we have
\[
\begin{aligned}
\sum_{i<j} 2\tr[ \mathcal{E}_{i j} X \mathcal{E}_{i j} Y ]
&= \sum_{i<j} \tr[ (E_{j i}-E_{i j})X (E_{j i}-E_{i j}) Y ]\\
&= \sum_{i<j} \tr[E_{i j}X E_{i j}Y+E_{j i}X E_{j i}Y-E_{i j}X E_{ j i}Y-E_{j i}X E_{i j}Y]\\
&=\sum_{i<j} (x_{j i} y_{j i}+x_{i j}y_{i j}-x_{jj} y_{ii}- x_{ii} y_{jj})\\
&= \sum_{i\neq j} (x_{ij}y_{i j}-x_{ii}y_{jj})=\sum_{i,j} (x_{ij}y_{i j}-x_{ii}y_{jj})\\
&=\tr(XY)-\tr(X)\tr(Y)=\langle X,Y\rangle_{\F}-r^2.
\end{aligned}
\]
Furthermore, we have $\sum_{i<j}\tr[(XY+YX)\mathcal{E}_{i j}\mathcal{E}_{i j}]=(N-1)\langle X,Y\rangle_{\F}$ since $\sum_{i<j}\mathcal{E}_{i j}\mathcal{E}_{i j}=-\frac{N-1}{2} I_N$.
Therefore, the summation of $(I)$ over $i<j$ equals $N\langle X,Y\rangle_{\F}-r^2$. We summarize into following theorem:

\begin{theorem} Given density $p$ and kernel $k$, the $k_p$ function is given by
\begin{equation}\label{kp-Grassm}
\begin{aligned}
\kappa_p(X,Y)&=4\langle\mathscr{A}(\nabla^\mathbb{R}_X\log (p\kappa) X),\mathscr{A}(\nabla^\mathbb{R}_Y\log (p\kappa) Y)\rangle_{\F} \kappa(X,Y)\\
&+\sum_{i<j} K^{ij}_Y K^{ij}_X \log \kappa\cdot \kappa(X,Y),
\end{aligned}
\end{equation}
where all Euclidean gradients are chosen to be symmetric. Furthermore, if $\kappa$ is a radial kernel, i.e., $\kappa(X,Y)=e^{-\psi(\Vert X-Y\Vert^2_{\F})}$ for some $\psi\in C^2[0,+\infty)$, then the $\kappa_p$ function equals
\begin{equation}\label{kp-Grassm-radial}
\begin{aligned}
    \kappa_p(X,Y)
    &=4\langle\mathscr{A}(\nabla^\mathbb{R}_X\log (p\kappa) X), \mathscr{A}(\nabla^\mathbb{R}_Y\log (p\kappa) Y)\rangle_{\F} \kappa(X,Y) \\
    &+ 2\psi'(\Vert X-Y\Vert^2_{\F})\cdot (N\langle X,Y\rangle_{\F}-r^2) \kappa(X,Y)\\
    &+4\psi''(\Vert X-Y\Vert^2_{\F})\Vert XY-YX\Vert^2_{\F} \kappa(X,Y).
\end{aligned}
\end{equation}
\end{theorem}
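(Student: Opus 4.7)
The plan is to specialize the general homogeneous-space formula \eqref{kp-general} to $\mathcal{G}_r(N)$ using the killing fields $K^{ij}_X=\mathcal{E}_{ij}X-X\mathcal{E}_{ij}$ derived just above Theorem~\ref{sub-killing-field}'s application, and then simplify the resulting two sums separately. So I would begin with
\[
\kappa_p(X,Y)=\kappa\sum_{i<j}K^{ij}_X\log(p\kappa)\cdot K^{ij}_Y\log(p\kappa)+\kappa\sum_{i<j}K^{ij}_Y K^{ij}_X\log\kappa
\]
and attack each sum in turn.

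For the first sum, the key observation is that for any matrix $A$ and any symmetric $X$ (which holds on $\mathcal{G}_r(N)$ because $X^2=X=X^\top$), cyclicity of trace together with $\mathcal{E}_{ij}^\top=-\mathcal{E}_{ij}$ yields
\[
\tr[(\mathcal{E}_{ij}X-X\mathcal{E}_{ij})A]=\tr[\mathcal{E}_{ij}(XA-AX)]=\langle\mathcal{E}_{ij},AX-XA\rangle_F=2\langle\mathcal{E}_{ij},\mathscr{S}(A)X\rangle_F,
\]
where the last step uses $AX-XA=\mathscr{S}(A)X-X\mathscr{S}(A)+\mathscr{A}(A)X-X\mathscr{A}(A)$ together with $\mathscr{A}(A)X-X\mathscr{A}(A)$ being symmetric under $X=X^\top$ (hence orthogonal to $\mathcal{E}_{ij}$) and the identity $\mathscr{S}(A)X-X\mathscr{S}(A)=2\mathscr{A}(\mathscr{S}(A)X)$. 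Applying this with $A=\nabla^\mathbb{R}_X\log(p\kappa)$ and the analogous expression at $Y$, then invoking Proposition~\ref{summation} on the skew-symmetric basis $\{\mathcal{E}_{ij}\}$, collapses the sum into the Frobenius inner product of the two skew parts, producing the factor of $4$ in \eqref{kp-Grassm}. This establishes the first displayed formula in the theorem.

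For the radial-kernel refinement, I would write $\kappa(X,Y)=e^{-\psi(2r-2\tr[XY])}$ and differentiate
\[
\psi\bigl(2r-2\tr[O_{ij}(t)XO_{ij}^\top(t)\,O_{ij}(t')YO_{ij}^\top(t')]\bigr)
\]
at $t=t'=0$. Chain rule produces two types of terms: a $\psi'$ term proportional to the single cross-derivative $\tr[(\mathcal{E}_{ij}X-X\mathcal{E}_{ij})(\mathcal{E}_{ij}Y-Y\mathcal{E}_{ij})]$, and a $\psi''$ term proportional to the product of first derivatives $\langle YX-XY,\mathcal{E}_{ij}\rangle_F\cdot\langle XY-YX,\mathcal{E}_{ij}\rangle_F$. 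The $\psi''$-sum reduces immediately via Proposition~\ref{summation} to $-\|XY-YX\|_F^2$, giving the third line of \eqref{kp-Grassm-radial}.

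The main obstacle, and the step I would do most carefully, is the combinatorial trace identity for the $\psi'$ coefficient. Expanding $(I)=2\tr[X\mathcal{E}_{ij}Y\mathcal{E}_{ij}]-\tr[(XY+YX)\mathcal{E}_{ij}^2]$, I need the two sums
\[
\sum_{i<j}\mathcal{E}_{ij}^2=-\tfrac{N-1}{2}I_N,\qquad \sum_{i<j}2\tr[\mathcal{E}_{ij}X\mathcal{E}_{ij}Y]=\langle X,Y\rangle_F-r^2.
\]
The first is a direct computation from $\mathcal{E}_{ij}^2=-\tfrac12(E_{ii}+E_{jj})$. The second requires writing $\mathcal{E}_{ij}=(E_{ji}-E_{ij})/\sqrt{2}$, expanding into four $\tr[E_{ab}XE_{cd}Y]$ pieces, using $\tr[E_{ab}XE_{cd}Y]=x_{bc}y_{da}$, and recognizing the resulting symmetric sum as $\tr(XY)-\tr(X)\tr(Y)=\langle X,Y\rangle_F-r^2$ after using $\tr(X)=\tr(Y)=r$. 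Combining these two identities gives the coefficient $2\psi'(\|X-Y\|_F^2)(N\langle X,Y\rangle_F-r^2)\kappa(X,Y)$ in the second line of \eqref{kp-Grassm-radial}, completing the proof.
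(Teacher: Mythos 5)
Your proposal is correct and follows essentially the same route as the paper's inline derivation: specialize \eqref{kp-general} to the killing fields $K^{ij}_X=\mathcal{E}_{ij}X-X\mathcal{E}_{ij}$, reduce the cross-term via $\tr[(\mathcal{E}_{ij}X-X\mathcal{E}_{ij})A]=2\langle\mathcal{E}_{ij},\mathscr{S}(A)X\rangle_F$ followed by Proposition~\ref{summation}, and handle the radial case by differentiating $\psi$ and summing the two trace identities $\sum_{i<j}\mathcal{E}_{ij}^2=-\frac{N-1}{2}I_N$ and $\sum_{i<j}2\tr[\mathcal{E}_{ij}X\mathcal{E}_{ij}Y]=\langle X,Y\rangle_F-r^2$. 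The one thing you do slightly more carefully than the paper is spelling out the orthogonal-decomposition argument behind the factor of $2$ in the first-sum reduction (the paper states that equality without comment), and your signs are internally consistent where the paper's intermediate line for $\sum_{i<j}\tr[(XY+YX)\mathcal{E}_{ij}^2]$ has a sign typo that cancels in the final formula.
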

Next we explicitly calculate the closed form of MKSDE for the exponential family on $\mathcal{G}_r(N)$. It suffices to calculate the matrix $Q(X,Y)=\kappa\sum_{i<j} (K^{i j}_X\zeta_k\cdot K^{i j}_Y\zeta_l)_{k l}$ and the vector $b(X,Y)=\kappa\cdot\sum_{i<j}[K^{ij}_X\eta+K^{i j}_X\log\kappa]K^{i j}_Y\zeta
$ introduced in \eqref{Q&b}. For $Q(X,Y)$, we have
\[
\begin{aligned}
\sum_{i<j}K^{i j}_X\zeta_k\cdot K^{i j}_Y\zeta_l&= \sum_{i<j} \tr[(\mathcal{E}_{i j} X-X\mathcal{E}_{i j})\nabla^\mathbb{R}_X\zeta_k]\cdot\tr[(\mathcal{E}_{i j} Y-Y\mathcal{E}_{i j})\nabla^\mathbb{R}_Y\zeta_l] \\
&= \sum_{i<j}\langle \mathcal{E}_{i j},\mathscr{S}(\nabla^\mathbb{R}_X\zeta_k) X   \rangle_{\F}\cdot \langle \mathcal{E}_{i j},\mathscr{S}(\nabla^\mathbb{R}_Y\zeta_l) Y   \rangle_{\F}\\
\text{Prop. \ref{summation} }\ 
&= \langle \mathscr{A}[\mathscr{S}(\nabla^\mathbb{R}_X\zeta_k) X], \mathscr{A}[\mathscr{S}(\nabla^\mathbb{R}_Y\zeta_l) Y]\rangle_{\F},
\end{aligned}
\]
Similarly, for $b(X,Y)$, we have
\[
\sum_{i<j}K^{i j}_X\log(e^\eta\kappa) \cdot K^{i j}_Y\zeta_k 
=\langle \mathscr{A}[\mathscr{S}(\nabla^\mathbb{R}_X\log(e^\eta\kappa))X],\mathscr{A}[\mathscr{S}(\nabla^\mathbb{R}_Y\zeta_k) Y]\rangle_{\F}.
\]
To sum up, we have

\begin{theorem}[MKSDE for exponential family] Given $p_\theta\propto\exp(\theta^\top\zeta(X)+\eta(X))$, let $Q(X,Y)$ and $b(X,Y)$ be the matrix and vector given by
\begin{equation}
\begin{aligned}
Q(X,Y)_{k l} &= \kappa(X,Y)\cdot \langle \mathscr{A}[\mathscr{S}(\nabla^\mathbb{R}_X\zeta_k) X], \mathscr{A}[\mathscr{S}(\nabla^\mathbb{R}_Y\zeta_l) Y]\rangle_{\F},\\
b(X,Y)_k &=\kappa(X,Y)\cdot\langle \mathscr{A}[\mathscr{S}(\nabla^\mathbb{R}_X\log(e^\eta\kappa))X],\mathscr{A}[\mathscr{S}(\nabla^\mathbb{R}_Y\zeta_k) Y]\rangle_{\F}, 
\end{aligned}
\end{equation}
Then the MKSDE can be computed via \eqref{MKSDE-Form}.
\end{theorem}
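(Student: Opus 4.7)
The plan is to invoke Theorem \ref{MKSDE-Form-Thm}, which already gives the Moore--Penrose representation of the MKSDE for exponential families in terms of the two bivariate objects $Q(X,Y)$ and $b(X,Y)$ from \eqref{Q&b}. So once I verify that these two objects admit the closed forms asserted in the statement, the result is immediate. Concretely, I would first substitute the exponential-family parameterization $p_\theta\propto\exp(\theta^\top\zeta+\eta)$ into \eqref{Q&b} and then specialize the generic Stein operator on homogeneous spaces \eqref{StoH} to the Grassmannian, using the killing fields $K^{ij}_X=\mathcal{E}_{ij}X-X\mathcal{E}_{ij}$ associated with the transitive $\Or(N)$-action and the orthonormal basis $\{\mathcal{E}_{ij}\}_{i<j}$ of $\mathfrak{o}(N)$.

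The core computation is to re-express $K^{ij}_X f$ for a smooth function $f$ on $\mathcal{G}_r(N)$ purely as a Frobenius inner product with $\mathcal{E}_{ij}$, so that Prop. \ref{summation} applies after summation. Starting from $K^{ij}_X f = \tr[(\mathcal{E}_{ij}X-X\mathcal{E}_{ij})\nabla^\mathbb{R}_X f]$, I would use the trace cyclic property to rewrite this as $\tr[\mathcal{E}_{ij}(X A-A X)]$ with $A:=\nabla^\mathbb{R}_X f$, and then exploit the two structural facts that distinguish the Grassmann case from the Stiefel case: (i) $X^\top=X$ for $X\in\mathcal{G}_r(N)$, so $(XA-AX)^\top = A^\top X-XA^\top$, and (ii) $\mathcal{E}_{ij}$ is skew-symmetric, so $\tr[\mathcal{E}_{ij}M]=\tr[\mathcal{E}_{ij}\mathscr{A}(M)]$. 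Combining these, $\mathscr{A}(XA-AX)$ collapses to a commutator of symmetric matrices involving only $\mathscr{S}(A)$ and $X$, and $K^{ij}_X f$ becomes (up to an absolute constant) $\langle\mathcal{E}_{ij},\mathscr{S}(\nabla^\mathbb{R}_X f)X\rangle_{\F}$. This is exactly the identity already invoked in the derivation of \eqref{kp-Grassm} above.

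With this identity in hand the rest is bookkeeping. I would take the product of $K^{ij}_X\zeta_k$ with $K^{ij}_Y\zeta_l$, sum over $i<j$, and apply Prop. \ref{summation} to the orthonormal skew-symmetric basis; the sum collapses to $\langle\mathscr{A}[\mathscr{S}(\nabla^\mathbb{R}_X\zeta_k)X],\mathscr{A}[\mathscr{S}(\nabla^\mathbb{R}_Y\zeta_l)Y]\rangle_{\F}$, which, multiplied by $\kappa(X,Y)$, is the asserted $Q(X,Y)_{kl}$. The same argument applied to $f=\log(e^\eta\kappa)=\eta+\log\kappa$ instead of $f=\zeta_k$ on one side yields the stated formula for $b(X,Y)_k$. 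Plugging these closed forms back into \eqref{Qu&bu&Qv&bv} recovers $Q_u,Q_v,b_u,b_v$, and then \eqref{MKSDE-Form} of Theorem \ref{MKSDE-Form-Thm} furnishes the MKSDE.

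The only non-routine step is the passage from the commutator $XA-AX$ to $\mathscr{S}(A)X$, i.e.\ checking that the asymmetric part of $A$ drops out entirely because $X$ is symmetric and $\mathcal{E}_{ij}$ is skew. This is the single place where the Grassmann geometry (as opposed to the Stiefel geometry treated in the previous subsection) enters in an essential way. Everything else is a direct specialization of Theorem \ref{MKSDE-Form-Thm}.
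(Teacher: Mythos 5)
Your proposal is correct and follows essentially the same route the paper takes in the derivation immediately preceding the theorem: rewrite $K^{ij}_X f$ via the cyclic trace property, use the symmetry of $X$ and skew-symmetry of $\mathcal{E}_{ij}$ to reduce to $\langle\mathcal{E}_{ij},\mathscr{S}(\nabla^\mathbb{R}_X f)X\rangle_{\F}$, collapse the sum over $i<j$ with Prop.\ \ref{summation}, and hand off to Thm.\ \ref{MKSDE-Form-Thm}. Your hedge ``up to an absolute constant'' is apt — carrying the algebra through carefully produces an overall factor of $4$ (one factor of $2$ from each of $K^{ij}_X$ and $K^{ij}_Y$) that is present in the paper's $\kappa_p$ derivation \eqref{kp-Grassm} but dropped in the stated $Q(X,Y)_{kl}$ and $b(X,Y)_k$; since this common multiple cancels in the minimizer $-Q^+b$, the conclusion is unaffected.
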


Next, we calculate several examples for specific kernel and families on $\mathcal{G}_r(N)$.

\subsubsection*{Commonly-used distributions} Most of the widely-used distribution families on Grassmann manifold have intractable normalizing constant, including:

\begin{itemize}
  \item Matrix Fisher (MF) family: $p(X)\propto \exp[\tr(F^\top X)]$ with parameter $F\in\mathbb{R}^{N\times N}$, which is a member of the exponential family since $\tr(F^\top X)$ is linear in $F$. We have $\nabla^\mathbb{R}_X\log p = \mathscr{S}(F) $ and $\nabla^\mathbb{R}_X\eta=0$. Note that $\zeta(X)=X$, thus the Euclidean gradient of its $(i,j)^{\text{th}}$ component $\zeta_{i j}=\tr[E_{i j}^\top X]$ is $\nabla^\mathbb{R}_X\zeta_{i j}=E_{i j}\in\mathbb{R}^{N\times N}$.
\item Riemannian Gaussian (RG) family: $p(X)\propto \exp(-\frac{d^2(X,\Bar{X})}{2\sigma^2})$ with parameters $\Bar{X}\in\mathcal{G}_r(N)$, $\sigma>0$. The RG family is not a member of the exponential family. For Riemannian Gaussian family, $\log p(X)=-\frac{1}{2\sigma^2}d^2(X,\Bar{X})$. The Riemannian gradient of $d^2$ function is $-2\Log_X(\Bar{X})$\cite{pennec2017hessian}, the Riemannian gradient of $\log p(X)$ is $\sigma^{-2}\Log_X(\Bar{X})$.  Here $\Log$ is the Riemannian logarithm on $\mathcal{G}_r(N)$, which can be computed numerically \cite{zimmermann2017matrix}.  It is known \cite[\S 2.4]{bendokat2020grassmann} that the Riemannian metric on $\mathcal{G}_r(N)$ is given by $\langle D_1,D_2\rangle_X= \frac{1}{2}\tr(D_1 D_2)$, thus $\nabla^\mathbb{R}_X\log p(X)=\frac{1}{2}\nabla^M_X\log p(X)=\frac{1}{2\sigma^2}\Log_X(\Bar{X})$. The Riemannian logarithm $\Log$ can be computed numerically \cite[\S 5.2]{bendokat2020grassmann}.
    \item Matrix Bingham (MB) and Matrix Fisher-Bingham (MFB) family: The MB and MFB family will degenerate to the matrix Fisher family on $\mathcal{G}_r(N)$, as $p(X)\propto\exp[\tr(X A X+F^\top X)]=\exp[\tr(AXX+FX)]=\exp[\tr((A+F)X)]$.
\end{itemize}

\subsubsection*{Commonly-used kernels} The most widely-used kernels on $\mathcal{G}_r(N)$ include
\begin{itemize}
    \item Gaussian kernel:
    \[
    k(X,Y) = \exp\left(-\frac{\tau}{2}\Vert X-Y\Vert^2_{\F}\right)\propto \exp(\tau\tr(X Y)),\quad X,Y\in\mathcal{G}_r(N).
    \]
    Note that $\nabla^\mathbb{R}_X\log k=\tau Y$.
    \item Inverse quadratic kernel:
    \[
    k(X,Y)=(\beta+\Vert X-Y\Vert^2_{\F})^{-\gamma}, \quad X,Y\in\mathcal{G}_r(N).
    \]
    Note that $\nabla^\mathbb{R}_X\log k=\frac{2\gamma}{\beta+\Vert X-Y\Vert^2_{\F}} Y$.
\end{itemize}

\subsection{Space of Symmetric Positive Definite Matrices
\texorpdfstring{ $\mathcal{P}(N)$}{P(N)}
 }

 It is known that $\GL(N)$, the general linear group of all non-singular matrices, acts transitively on $\mathcal{P}(N)$, where the isometry corresponding to each $G\in\GL(N)$ is $X\mapsto G.X=G^\top X G$. The tangent space of $\GL(N)$ at identity is $\mathfrak{gl}(N):=\mathbb{R}^{N\times N}$, where $\{E_{i j}:1\leq i,j\leq N\}$ is an orthogonal basis, as introduced in \S\ref{KSD-Homogeneous}. We take local curves $G_{i j}(t)$ of $E_{i j}$ on $\mathcal{P}(N)$ such that $\frac{d}{d t} G_{i j}(t)|_{t=0}=E_{i j}$, then the killing field corresponding to $E_{i j}$ is $K^{i j}_X:=\frac{d}{d t}G(t).X|_{t=0}=\frac{d}{dt} G_{i j}(t)^\top X G_{i j}(t)|_{t=0}=E_{j i} X+X E_{i j}$. We plug this into \eqref{kp-general} so that the $\kappa_p$ function on $\mathcal{P}(N)$ equals
\[
\kappa_p(X,Y)= \kappa\cdot \sum_{i<j} K^{i j}_X\log(p \kappa)\cdot K^{i j}_Y\log(p \kappa) + \kappa\cdot\sum_{i<j}K^{i j}_Y K^{i j}_X\log \kappa.
\]
The the first term can be written in a closed form:
\[
\begin{aligned}
&\kappa\cdot \sum_{i<j} K^{i j}_X\log(p \kappa)\cdot K^{i j}_Y\log(p \kappa)\\
&=\kappa\cdot \sum_{i<j} \tr[(E_{j i} X+X E_{i j})\nabla^\mathbb{R}_X\log(p \kappa)]\cdot \tr[(E_{j i} Y+Y E_{i j})\nabla^\mathbb{R}_Y\log(p \kappa)]\\
&= 4\kappa\cdot \sum_{i<j} \langle E_{i j}, X\mathscr{S}(\nabla^\mathbb{R}_X\log(p \kappa))\rangle_{\tr}\cdot \langle E_{i j}, Y\mathscr{S}(\nabla^\mathbb{R}_Y\log(p \kappa))\rangle_{\tr}\\
&= 4 \kappa \langle X\mathscr{S}(\nabla^\mathbb{R}_X\log(p \kappa)), Y \mathscr{S}(\nabla^\mathbb{R}_Y\log(p \kappa))\rangle_{\F}.
\end{aligned}
\]
The summation of $K^{i j}_Y K^{i j}_X \log \kappa$ relies on the specific form of $\kappa$ and thus can not be further simplified. However, if $\kappa$ is a radial kernel, i.e., 
\[
\kappa(X,Y)=\exp(-\psi(\Vert X-Y\Vert^2_{\F})),\quad X,Y\in\mathcal{P}(N),
\]
for some $\psi\in C^2[0,+\infty)$, then we have
\[
\begin{aligned}
K^{i j}_Y K^{i j}_X \log \kappa
&= 
2\psi'(\Vert X-Y\Vert^2_{\F}) \tr[(E_{j i} X+X E_{i j})(E_{j i} Y+Y E_{i j})]\\
&- 4\psi''(\Vert X-Y\Vert^2_{\F}) \tr[(E_{ji}X+X E_{ij})(X-Y)]\tr[(E_{ji}Y+Y E_{ij})(X-Y)]\\
&=4\psi'(\Vert X-Y\Vert^2_{\F}) \underbrace{\tr[X E_{i j} Y E_{i j}+X E_{i j} E_{j i} Y]}_{(i)}
\\
&+ 16\psi''(\Vert X-Y\Vert^2_{\F}) \underbrace{\tr[(X-Y)X E_{ij}]\tr[(X-Y)YE_{ij}]}_{(ii)}.
\end{aligned}
\]
Denote $X:=(x_{i j})$ and $Y=(y_{i j})$. Note that $\tr[E_{i j} X E_{i j} Y]= x_{j i} y_{i j} =x_{i j} y_{i j} $ since $X$ is symmetric. Also note that $E_{i j} E_{j i}= E_{i i}$. Therefore, the summation of $(i)$ over $i,j$ equals $4(N+1)\psi'(\Vert X-Y\Vert^2_{\F})\langle X,Y\rangle_{\F}$. The summations of $(ii)$ follows from the Prop. \ref{summation}, which equals $16\psi'(\Vert X-Y\Vert^2_{\F})\langle X(X-Y),Y(X-Y)\rangle_{\F}$. To sum up,
\begin{theorem}\label{SPD-kp} Given the kernel function $\kappa$, the $\kappa_p$ function is,
\begin{equation}
\begin{aligned}
\kappa_p(X,Y)&=4\langle X\mathscr{S}(\nabla^\mathbb{R}_X \log (p\kappa)), Y\mathscr{S}(\nabla^\mathbb{R}_Y \log (p\kappa)) \rangle_{\F} \cdot \kappa(X,Y) \\
&+ \sum_{i,j} K^{i j}_Y K^{i j}_Y\log \kappa\cdot \kappa(X,Y).
\end{aligned}    
\end{equation}
Furthermore, if $\kappa$ is a radial kernel, i.e., $\kappa(X,Y)=e^{-\psi(\Vert X-Y\Vert^2_{\F})}$ for some $\psi\in C^2[0,+\infty)$, 
\begin{equation}\label{SPD-kp-Form}
\begin{aligned}
\kappa_p(X,Y)
&= 4\langle X\mathscr{S}(\nabla^\mathbb{R}_X \log (p\kappa)), Y\mathscr{S}(\nabla^\mathbb{R}_Y \log (p\kappa)) \rangle_{\F} \cdot \kappa(X,Y)\\
&+4(N+1)\langle X,Y\rangle_{\F} \psi'(\Vert X-Y\Vert^2_{\F})\cdot \kappa(X,Y)\\
&+16\psi''(\Vert X-Y\Vert^2_{\F})\langle X(X-Y),Y(X-Y)\rangle_{\F}\cdot \kappa(X,Y).
\end{aligned}
\end{equation}
\end{theorem}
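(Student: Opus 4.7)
The plan is to instantiate the general homogeneous-space formula \eqref{kp-general} in the SPD setting, using the killing fields induced by the transitive $\GL(N)$-action $X\mapsto G^\top X G$. Picking the basis $\{E_{ij}\}_{1\le i,j\le N}$ of $\mathfrak{gl}(N)$ and local curves $G_{ij}(t)$ with $G_{ij}'(0)=E_{ij}$, one reads off the killing field $K^{ij}_X = E_{ji}X + XE_{ij}$, which lies in $T_X\mathcal{P}(N)$ (it is symmetric, as it must be). With these fields in hand the task reduces to computing two bookkeeping sums: the ``first-order times first-order'' sum $\sum_{i,j}[K^{ij}_X\log(p\kappa)][K^{ij}_Y\log(p\kappa)]$, and the ``mixed second-order'' sum $\sum_{i,j}K^{ij}_Y K^{ij}_X\log\kappa$. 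The general statement of the theorem is then just the result of plugging these two sums back into \eqref{kp-general}.

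For the first sum I would convert each $K^{ij}_X f$ into a Frobenius inner product: since $K^{ij}_X f = \langle E_{ji}X+XE_{ij},\nabla^\mathbb{R}_X f\rangle_{\F}$, and since $X$ and the tangent vector $E_{ji}X+XE_{ij}$ are symmetric, a short rearrangement using cyclicity of the trace yields $K^{ij}_X f = 2\langle E_{ij}, X\mathscr{S}(\nabla^\mathbb{R}_X f)\rangle_{\F}$. Applying this to $f=\log(p\kappa)$ in both the $X$- and $Y$-arguments and invoking the orthonormal-basis identity of Prop.~\ref{summation} collapses $\sum_{i,j}(\cdots)_X(\cdots)_Y$ to $4\langle X\mathscr{S}(\nabla^\mathbb{R}_X\log(p\kappa)),Y\mathscr{S}(\nabla^\mathbb{R}_Y\log(p\kappa))\rangle_{\F}$, which is the first term of the claimed formula. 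Multiplying by $\kappa(X,Y)$ then gives the first line of the general statement.

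For the radial case $\kappa(X,Y)=e^{-\psi(\|X-Y\|_{\F}^2)}$, set $g(X,Y):=\|X-Y\|_{\F}^2$ and apply the chain rule, giving $K^{ij}_Y K^{ij}_X\log\kappa = -\psi'(g)\,K^{ij}_Y K^{ij}_X g + \psi''(g)\,(K^{ij}_X g)(K^{ij}_Y g)\cdot(-1)\cdot(-1)$; keeping careful track of signs, the remaining work is to evaluate the two index sums $\sum_{i,j}\tr[XE_{ij}YE_{ij}+XE_{ij}E_{ji}Y]$ (coming from $\sum_{i,j}K^{ij}_Y K^{ij}_X g$) and $\sum_{i,j}\tr[(X-Y)XE_{ij}]\tr[(X-Y)YE_{ij}]$ (coming from $\sum_{i,j}(K^{ij}_X g)(K^{ij}_Y g)$). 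The main obstacle, and the only place requiring real computation, is this combinatorial step. For the former I would use $E_{ij}E_{ji}=E_{ii}$ (so $\sum_{i,j}E_{ij}E_{ji}=NI$) and the direct index identity $\tr[XE_{ij}YE_{ij}]=X_{ji}Y_{ji}=X_{ij}Y_{ij}$ (via symmetry of $X,Y$), which sum to $(N+1)\langle X,Y\rangle_{\F}$ and account for the $4(N+1)\langle X,Y\rangle_{\F}\psi'$ term. For the latter, use $\tr[AE_{ij}]=A_{ji}$ together with the orthonormality identity $\sum_{i,j}A_{ji}B_{ji}=\langle A,B\rangle_{\F}$ from Prop.~\ref{summation} applied to $A=(X-Y)X$, $B=(X-Y)Y$; a cyclic-trace rearrangement converts the resulting $\langle (X-Y)X,(X-Y)Y\rangle_{\F}$ into the stated $\langle X(X-Y),Y(X-Y)\rangle_{\F}$, yielding the $16\psi''(\|X-Y\|_{\F}^2)\langle X(X-Y),Y(X-Y)\rangle_{\F}$ term. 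Assembling these pieces with the kernel factor $\kappa(X,Y)$ produces \eqref{SPD-kp-Form}.
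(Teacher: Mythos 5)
Your proposal follows essentially the same route as the paper: the $\GL(N)$-killing fields $K^{ij}_X=E_{ji}X+XE_{ij}$, conversion of the directional derivative $K^{ij}_X f$ into $2\langle E_{ij},X\mathscr{S}(\nabla^\mathbb{R}_X f)\rangle_{\F}$, collapse of the first sum by the orthonormal-basis identity, and the same pair of index sums for the radial kernel, evaluated with $E_{ij}E_{ji}=E_{ii}$ and $\tr[XE_{ij}YE_{ij}]=X_{ij}Y_{ij}$.

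One caution is warranted about the chain-rule bookkeeping. You write
$K^{ij}_Y K^{ij}_X\log\kappa = -\psi'(g)K^{ij}_Y K^{ij}_X g + \psi''(g)(K^{ij}_X g)(K^{ij}_Y g)\cdot(-1)\cdot(-1)$, but since $\log\kappa=-\psi(g)$ the correct expansion is
$K^{ij}_Y K^{ij}_X\log\kappa = -\psi'(g)K^{ij}_Y K^{ij}_X g - \psi''(g)(K^{ij}_X g)(K^{ij}_Y g)$.
If you now substitute $K^{ij}_X g = 2\tr[(E_{ji}X+XE_{ij})(X-Y)] = 4\tr[(X-Y)XE_{ij}]$ and $K^{ij}_Y g = -2\tr[(E_{ji}Y+YE_{ij})(X-Y)] = -4\tr[(X-Y)YE_{ij}]$, the minus sign in front of $\psi''$ and the minus sign hiding in $K^{ij}_Y g$ cancel, giving the correct coefficient $+16\psi''$ on $\tr[(X-Y)XE_{ij}]\tr[(X-Y)YE_{ij}]$. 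Your ``$\cdot(-1)\cdot(-1)$'' bookkeeping appears to absorb one of these minus signs twice, which, read literally, flips the overall sign of the $\psi''$-term; you should instead carry a single $-\psi''$ coefficient and a single $(-1)$ factor from $K^{ij}_Y g$. You nevertheless land on the paper's $+16\psi''\langle X(X-Y),Y(X-Y)\rangle_{\F}$, and your reduction of the remaining sums to $(N+1)\langle X,Y\rangle_{\F}$ and to $\langle(X-Y)X,(X-Y)Y\rangle_{\F}$ (which equals $\langle X(X-Y),Y(X-Y)\rangle_{\F}$ by taking transposes and cycling) is correct. For context, the paper's own displayed derivation contains the same kind of sign slip (an intermediate $-4\psi''$ followed immediately by $+16\psi''$) and a $\psi'$/$\psi''$ typo in the text; the theorem statement itself is correct.
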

Next we explicitly calculate the closed form of MKSDE for the exponential family on $\mathcal{G}_r(N)$. It suffices to calculate the matrix $Q(X,Y)=\kappa\sum_{i,j} (K^{i j}_X\zeta_k\cdot K^{i j}_Y\zeta_l)_{k l}$ and the vector $b(X,Y)=\kappa\cdot\sum_{i,j}[K^{ij}_X\eta+K^{i j}_X\log\kappa]K^{i j}_Y\zeta
$ introduced in \eqref{Q&b}. For $Q(X,Y)$, we have
\[
\begin{aligned}
\sum_{i,j}K^{i j}_X\zeta_k\cdot K^{i j}_Y\zeta_l&= \sum_{i,j} \tr[E_{j i} X+X E_{i j})\nabla^\mathbb{R}_X\zeta_k]\cdot\tr[(E_{ji} Y-YE_{i j})\nabla^\mathbb{R}_Y\zeta_l] \\
&= \sum_{i,j}\langle E_{i j},X\mathscr{S}(\nabla^\mathbb{R}_X\zeta_k)    \rangle_{\F}\cdot \langle E_{i j},Y\mathscr{S}(\nabla^\mathbb{R}_Y\zeta_l)    \rangle_{\F}\\
\text{Prop. \ref{summation} }\ 
&= \langle X\mathscr{S}(\nabla^\mathbb{R}_X\zeta_k) , Y\mathscr{S}(\nabla^\mathbb{R}_Y\zeta_l)\rangle_{\F},
\end{aligned}
\]
Similarly, for $b(X,Y)$, we have
\[
\sum_{i<j}K^{i j}_X\log(e^\eta\kappa) \cdot K^{i j}_Y\zeta_k 
=\langle X\mathscr{S}(\nabla^\mathbb{R}_X\log(e^\eta\kappa)), Y\mathscr{S}(\nabla^\mathbb{R}_Y\zeta_k)\rangle_{\F}.
\]
To sum up, we have

\begin{theorem}[MKSDE for exponential family] Given $p_\theta\propto\exp(\theta^\top\zeta(X)+\eta(X))$, let $Q(X,Y)$ and $b(X,Y)$ be the matrix and vector given by
\begin{equation}
\begin{aligned}
Q(X,Y)_{kl}&=\kappa(X,Y)\cdot\langle X\mathscr{S}(\nabla^\mathbb{R}_X\zeta_k) , Y\mathscr{S}(\nabla^\mathbb{R}_Y\zeta_l)\rangle_{\F},\\
b(X,Y)_k&=\kappa(X,Y)\cdot\langle X\mathscr{S}(\nabla^\mathbb{R}_X\log(e^\eta\kappa)), Y\mathscr{S}(\nabla^\mathbb{R}_Y\zeta_k)\rangle_{\F}, 
\end{aligned}
\end{equation}
Then the MKSDE can be computed using \eqref{MKSDE-Form}.
\end{theorem}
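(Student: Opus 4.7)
The proof plan is straightforward and runs in close parallel to the derivations carried out for the Stiefel and Grassmann manifolds in the two preceding subsections. I would proceed in three steps, the computation being entirely mechanical once the killing-field formula on $\mathcal{P}(N)$ is in hand.

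First, I would substitute the explicit killing field $K^{ij}_X = E_{ji}X + XE_{ij}$ (derived earlier in this section) into the directional derivative. Writing $K^{ij}_X\zeta_k$ as $\langle E_{ji}X + XE_{ij},\nabla^\mathbb{R}_X\zeta_k\rangle_{\F}$ and using the cyclic property of the trace together with the symmetry $X=X^\top$, I can repackage both summands so that the basis element $E_{ij}$ appears as the first slot of the inner product. The two resulting terms combine into
$$K^{ij}_X\zeta_k = 2\langle E_{ij},\, X\mathscr{S}(\nabla^\mathbb{R}_X\zeta_k)\rangle_{\F},$$
where the symmetrization operator $\mathscr{S}$ arises naturally from $Z + Z^\top = 2\mathscr{S}(Z)$. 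Replacing $\zeta_k$ with $\log(e^\eta\kappa)$ yields the analogous identity for $K^{ij}_X\log(e^\eta\kappa)$.

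Second, I would collapse the sum over $(i,j)$ using Prop. \ref{summation}, which asserts $\sum_{i,j}\langle E_{ij},A\rangle_{\F}\langle E_{ij},B\rangle_{\F} = \langle A,B\rangle_{\F}$ for the orthonormal basis $\{E_{ij}\}_{1\leq i,j\leq N}$ of $\mathbb{R}^{N\times N}$. Applied to the bilinear sums $\sum_{i,j}K^{ij}_X\zeta_k\cdot K^{ij}_Y\zeta_l$ and $\sum_{i,j}K^{ij}_X\log(e^\eta\kappa)\cdot K^{ij}_Y\zeta_k$, this telescopes the double sum (up to an overall constant absorbed into the convention) into $\langle X\mathscr{S}(\nabla^\mathbb{R}_X\zeta_k),\,Y\mathscr{S}(\nabla^\mathbb{R}_Y\zeta_l)\rangle_{\F}$ and $\langle X\mathscr{S}(\nabla^\mathbb{R}_X\log(e^\eta\kappa)),\,Y\mathscr{S}(\nabla^\mathbb{R}_Y\zeta_k)\rangle_{\F}$ respectively. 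Multiplying by $\kappa(X,Y)$ recovers precisely the stated closed-form expressions for $Q(X,Y)_{kl}$ and $b(X,Y)_k$ from \eqref{Q&b}.

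Third, I would invoke Thm. \ref{MKSDE-Form-Thm}, which already provides the Moore-Penrose characterization in \eqref{MKSDE-Form}: once $Q(X,Y)$ and $b(X,Y)$ are in closed form, assembling them into $Q_u,Q_v,b_u,b_v$ via the plug-in formulas \eqref{Qu&bu&Qv&bv} delivers the MKSDE in one stroke.

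The main obstacle is not conceptual but one of bookkeeping: the Euclidean gradient $\nabla^\mathbb{R}_X\zeta_k$ is only defined up to an element of the normal space of $T_X\mathcal{P}(N)$ in $\mathbb{R}^{N\times N}$ (see \S\ref{Grad-Mat}). I would need to verify that the final expressions are insensitive to this ambiguity. This is indeed the case because each Euclidean gradient appears only through $X\mathscr{S}(\nabla^\mathbb{R}_X(\cdot))$ paired against the corresponding symmetrized quantity on the $Y$-side, which amounts to pairing against tangent-space-valued vectors; any normal-direction contribution to $\nabla^\mathbb{R}_X(\cdot)$ is killed by the pairing, as explicitly observed in \S\ref{Grad-Mat}. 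Apart from this check and careful tracking of transposes, no further estimates are needed.
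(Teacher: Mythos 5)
Your proof is correct and follows essentially the same route as the paper: plug in the killing field $K^{ij}_X = E_{ji}X + XE_{ij}$, rewrite the directional derivative as $K^{ij}_X f = 2\langle E_{ij},\, X\mathscr{S}(\nabla^{\mathbb{R}}_X f)\rangle_{\F}$, collapse the $(i,j)$-sum via Prop.~\ref{summation}, and invoke Thm.~\ref{MKSDE-Form-Thm}. The overall factor of $4$ you flag is likewise dropped in the paper and is harmless because it scales $Q$ and $b$ identically, hence cancels in the Moore--Penrose minimizer \eqref{MKSDE-Form}; and your observation that $\mathscr{S}$ annihilates the skew-symmetric ambiguity in $\nabla^{\mathbb{R}}_X(\cdot)$ is a sound complement to the general remark in \S\ref{Grad-Mat}.
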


\subsubsection*{Commonly-used distributions}
\begin{itemize}
     \item Wishart family: $p(X;V,r)\propto |X|^{(r-N+1)/2}\exp\left( -\frac{1}{2} \tr[V^{-1} X]  \right) $, with parameter $V\in\mathcal{P}(N)$, $1\leq N\leq  r\in\mathbb{N}_+$. The Wishart family is not a member of the exponential family, since the domain $\mathcal{P}(N)$ of $V$ is not a vector space. Due to Jacobi's formula \cite[Thm. 8.1]{magnus2019matrix}, we have $\nabla^\mathbb{R}_X\log p=-\frac{1}{2} V^{-1} + \frac{r-N+1}{2} X^{-1}$.
     \item Riemannian Gaussian family: $p(X)\propto \exp\big(-\frac{d^2(X,\Bar{X})}{2\sigma^2}\big)$ with parameters $\Bar{X}\in\mathcal{P}(N)$, $\sigma>0$. The Riemannian Gaussian family is not a member of the exponential family. As shown in \cite{moakher2005differential}, on $\mathcal{P}(N)$, $d^2(X,\Bar{X})=\tr(\Log^2(\Bar{X}^{-1}X))$. By \cite[Prop. 2.1]{moakher2005differential}, we have 
\[
\frac{d}{d t} d^2(X(t),\Bar{X}) = \frac{d}{d t} \tr(\Log^2(\Bar{X}^{-1}X(t)))= 2\tr\left[\Log(\Bar{X}^{-1} X(t)) X^{-1}(t) \frac{d}{d t}X(t) \right].
\]
Therefore, $\nabla^\mathbb{R}_X \log p(X)= -\sigma^{-2} X^{-1}\Log(\Bar{X}^{-1}X) $. 
 \end{itemize}
 
\subsubsection*{Commonly-used kernels} The most widely-used kernels on $\mathcal{P}(N)$ include
\begin{itemize}
\item Gaussian kernel:
\[
\kappa(X,Y)= \exp\left( -\frac{\tau}{2}\Vert X-Y \Vert_{\F}^2 \right),\quad X,Y\in\mathcal{P}(N).
\]
Note that $\nabla^\mathbb{R}_X\log\kappa=  \tau (Y-X) $.
\item Inverse quadratic kernel:
   \[
    \kappa(X,Y)=(\beta+\Vert X-Y\Vert^2_{\F})^{-\gamma}, \quad X,Y\in\mathcal{G}_r(N).
    \]
    Note that $\nabla^\mathbb{R}_X\log \kappa=\frac{2\gamma (Y-X)}{\beta+\Vert X-Y\Vert^2_{\F}}$.
\end{itemize} 

\section{Experiments}\label{experiments}

In this section, we present two experiments to demonstrate the power of our kernel Stein method on manifolds. In the first experiments, we will compare the MLE and MKSDE in estimating the parameters of the matrix Fisher distribution on a Stiefel manifold $\mathcal{V}_r(N)$, illustrating the advantage of MKSDE over MLE due to the presence of the intractable normalizing constant in the MLE. In the second experiment, we validate the power of composite goodness of fit test using MKSDE by comparing the matrix Fisher distribution and matrix Bingham distribution on a Stiefel manifold. The kernel we choose in these experiments is the Gaussian kernel $\kappa(X,Y)=\exp(\tr(X^\top Y))$, $X,Y\in\mathcal{V}_r(N)$. Code for computing the KSD, MKSDE and conducting the composite goodness of fit test is provided on GitHub at \url{https://github.com/cvgmi/KSD-on-Riemannian-Manifolds}.
\subsection{MKSDE vs. MLE}\label{MKSDEvsMLE}

The normalizing constant $c(F)$ of the matrix Fisher distribution $p(X)\propto \exp(\tr(F^\top X))$ on a Stiefel manifold is an intractable hypergeometric function of $F$. The most widely-used classical method to compute the MLE of the matrix Fisher distribution on a Stiefel manifold utilize two direct approximate solutions, introduced in \cite[\S 13.2.3]{mardia2000directional}. In general, the first solution approximates the MLE well when $F$ is small, while the second approximates the MLE relatively accurately when $F$ is large. However, it should be noted that both approximate solutions work poorly for medium-valued $F$, and the second approximate solution involves solving a non-linear multivariate equation, which is burdened with relatively hight computational cost.

In this experiment, we obtain the samples from a matrix Fisher distribution $p(X)\propto \exp(\tr(F_0^\top X))$ with ground truth $F_0$ on  $\mathcal{V}_r(N)$, then compute the MLE, $\hat{F}_{\text{MLE}}$, the MKSDE $\hat{F}_{\text{MKSDE-U}}$ and  $\hat{F}_{\text{MKSDE-V}}$ obtained by minimizing $U^w_n$ and $V^w_n$ in \eqref{Uw&Vw} respectively. The figure \ref{MKSDEvsMLE-Fig} shows the Frobenius distance between the ground truth $F_0$ and estimators including $\hat{F}_{\text{MLE}}$, $\hat{F}_{\text{MKSDE-U}}$ and $\hat{F}_{\text{MKSDE-V}}$ with varying values of $F_0$. Here we set $E_1=(1,0;1,0;1,0)\in\mathbb{R}^{3\times 2}$ and $E_2=(1,1;1,1;1,1)\in\mathbb{R}^{3\times 2}$ and the value of $F_0$ will vary in $0.3*E_1$, $E_1$, $5 * E_1$, $0.3 * E_2$, $E_2$ and $5 * E_2$.
As depicted in figure \ref{MKSDEvsMLE-Fig}, the approximate solution using the MLE worsens as $F_0$ becomes larger. This experiment demonstrates the performance of MKSDE which is independent of the normalization constant. 

\begin{figure*}[!t]
\centering
\subfloat[\(F_0 = 0.3*E_1\)]{\includegraphics[width =0.3\linewidth]{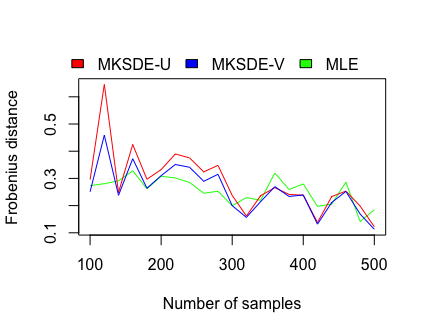}}
\hfil
\subfloat[$F_0 = E_1 $]{\includegraphics[width =0.3\linewidth]{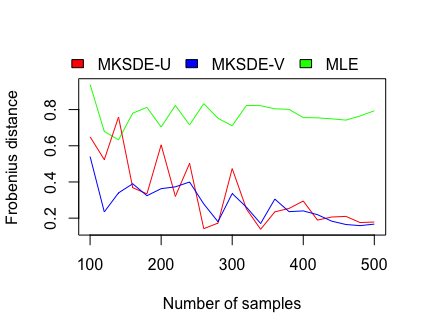}}
\hfil
\subfloat[\(F_0 = 5*E_1\)]{\includegraphics[width =0.3\linewidth]
{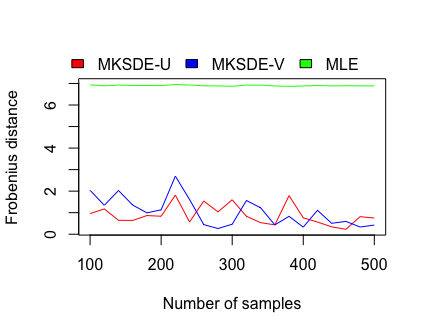}}
\hfil
\subfloat[\(F_0 = 0.3 * E_2\)]{\includegraphics[width =0.3\linewidth]
{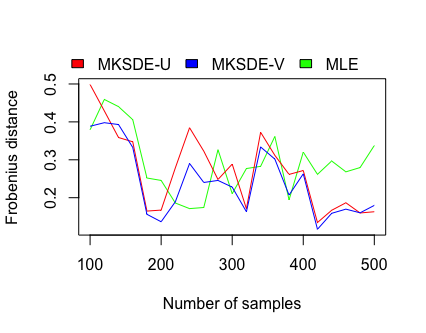}}
\hfil
\subfloat[\(F_0= E_2\)]{\includegraphics[width =0.3\linewidth]
{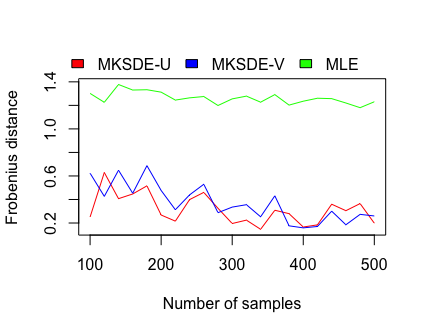}}
\hfil
\subfloat[\(F_0 = 5 * E_2 \)]{\includegraphics[width =0.3\linewidth]
{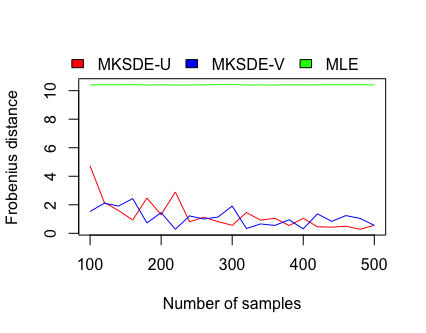}}
\caption{Frobenius distances between the estimators and ground truth }
\label{MKSDEvsMLE-Fig}
\end{figure*}
\vspace*{-10pt}

\subsection{Composite goodness of fit test}

In this experiment, we conduct the composite goodness of fit test presented in algorithm block \ref{GoF-Algo} to check whether a group of samples from a specific matrix Fisher distribution can be modeled by the matrix Bingham family. The table \ref{GoF-p-values} depicts the $p$ values of the composite goodness-of-fit test under different values of $F$ and the number of samples $n$. Intuitively, an MF distribution with small $F$ becomes nearly uniform, and an MF distribution with large $F$ will concentrate around the dominant directions specified by $F$. Therefore, the MF distributions belong approximately to the MB family when $A$ is small or large, but differ from the shape of the MB family for $F$ in-between. This is consistent with the results in Table \ref{GoF-p-values}. In addition, the loss function corresponding to the $V$-statistic shows better stability in the context of optimization compared to the $U$-statistic, this is because the global minimizer of $V^w_n$ always exists, as stated in Thm. \ref{MKSDE-Form-Thm}.

\begin{table}[ht]
\centering
\caption{ $p$-values of the composite goodness of fit test}
\label{GoF-p-values}
\subfloat[$p$-values of the $U$-stat]{
\begin{tabular}{lccccc}
\hline
number of samples & 100      & 150      & 200       & 250     & 300 \\
\hline

$F = 0.3 * E_1$     & 0.4670 & 0.3307 & 0.0582 & 0.0223 & 0.0034 \\
$F = E_1$     & 0.3420  & 0.0713 & 0.0320 & 0.0018 & 0.0007 \\
$ F = 5 * E_1$     & 0.2624 & 0.1528  & 0.0139 & 0.0282 & 0.0214\\\hline
\end{tabular}}

\subfloat[$p$-values of the $V$-stat]{
\begin{tabular}{lccccc}
\hline
number of samples & 100      & 150      & 200       & 250     & 300 \\
\hline

$F = 0.3 * E_1$     & 0.3923 & 0.1506 & 0.0348 & 0.0213 & 0.0028 \\
$F = E_1$     & 0.0687  & 0.0045 & 0.0012 & 0.0001 & 0.0000 \\
$ F = 5 * E_1$     & 0.0202 & 0.0173  & 0.0008 & 0.0030 & 0.0024\\\hline
\end{tabular}}
\end{table}

\section*{Acknowledgements}
This research was in part funded by the NIH NINDS and NIA grant RO1NS121099 to Vemuri.

\bibliography{reference.bib}
\bibliographystyle{plain}

\newpage
\appendix
\setcounter{page}{1}
\setcounter{section}{0}

\section{Proofs of Theorems (Original to this Work)}\label{proofs_supp}

\subsection{Proof of theorem \ref{Stein-identity}}\label{Proof-Stein-identity}

\begin{proof}
By Cauchy-Schwarz inequality, for $\vec{f}\in\mathcal{H}^m_\kappa$, we have
\[
\begin{aligned}
|\mathcal{T}_p \vec{f}(x)|&=|\langle \vec{f},(\vec{\mathcal{T}}_p \kappa)_x\rangle_{\mathcal{H}^m_\kappa}|\leq \Vert \vec{f}\Vert_{\mathcal{H}^m_\kappa}\cdot \Vert(\vec{\mathcal{T}}_p \kappa)_x\Vert_{\mathcal{H}^m_\kappa}=\Vert f\Vert_{\mathcal{H}^m_\kappa}\cdot \sqrt{\kappa_p(x,x)},\\
|\sum_{l=1}^m f_l(x) D_x^l|&\leq \sum_{l=1}^m |D^l_x| |\langle f_l,\kappa_x\rangle_{\mathcal{H}_\kappa}|\leq \sum_{l=1}^m |D^l_x| \Vert f_l\Vert_{\mathcal{H}_\kappa} \Vert \kappa_x\Vert_{\mathcal{H}_\kappa}\\
&\leq \sum_{l=1}^m |D^l_x| \Vert f_l\Vert_{\mathcal{H}_\kappa} \sqrt{\kappa(x,x)}\leq  \Vert f\Vert_{\mathcal{H}^m_\kappa} \sqrt{\kappa(x,x)}\sum_{l=1}^m |D^l_x|.    
\end{aligned}
\]
Therefore, $|\sum_{l=1}^m f_l D^l|$ and $\mathcal{T}_p \vec{f}$ are $P$-integrable for all $\vec{f}\in\mathcal{H}^m_\kappa$, since $\sqrt{\kappa(x,x)}\sum_{l=1}^m|D^l|$ and $\sqrt{\kappa_p(x,x)}$ are $P$-integrable. By Thm. \ref{DivThm}, $P(\mathcal{T}_p \vec{f})=\int_M \Div(\sum_{l=1}^m p f_l D^l) d\Omega = 0 $.
\end{proof}

\subsection{Proof of theorem \ref{vector-field-basis}}
\begin{proof}
 By Whitney embedding theorem \cite[Thm 6.15]{lee2013smooth}, there exists a smooth embedding $\psi:M\to \mathbb{R}^{2d+1}$. Let $D^l_x$, $1\leq l\leq 2d+1$, be the orthonormal projection of tangent vector $\big(\frac{\partial}{\partial x^l}\big)_x$ onto the tangent space $T_x \psi(M)$ of $\psi(M)$. Then $d\psi^{-1}(D^l)$, $l=1,\dots,2d+1$, is a group of vector fields on $M$ s.t. they span the entire tangent space of $M$ at each point.
 \end{proof}

\subsection{Proof of theorem \ref{KSD-Characterization-Compact} and \ref{KSD-Characterization-Noncompact}}

The proof of Thm. \ref{KSD-Characterization-Compact} and $\ref{KSD-Characterization-Noncompact}$ relies on results and techniques from \cite{simon2018kernel,barp2018riemann,qu2022framework}. To avoid redundancy, we omit detailed explanations here and refer the reader to these sources for a comprehensive background. 

\paragraph{Notation} Let $\mathcal{M}(M)=C_0(M)^*$ be the space of all signed finite Borel measures on $M$. For $\nu\in\mathcal{M}(M)$, $\int f d\nu$ is also written as $\nu(f)$. We can define the $\ksd$ between $P$ and $\nu$ similarly to \eqref{KSDdef-M} as, $\ksd(P,\nu):=\sup\{\nu(\mathcal{T}_p\vec{h}):\vec{h}\in\mathcal{H}^m_{\Tilde{\kappa}}\}$ and all results in \S\ref{KSDonM} apply. We also let $\mathcal{M}_{p,\psi}:=\{\nu\in\mathcal{M}(M): D^l\log p, |D^l|, |d\psi(D^l)|, 1\leq l\leq m \text{ are all } \nu\text{-integrable} \}$. Let 
$C^1_b(\mathbb{R}^d)$ be the space of all bounded continuously differentiable functions such that their gradients are also bounded. We equip $C^1_b(\mathbb{R}^d)$ with a special norm (introduced in \cite{simon2018kernel}) as follows: $f_n\to f$ in $C^1_b(\mathbb{R}^d)$ iff 
\begin{itemize}
    \item[(i)] $f_n\to f$, $|\nabla f_n-\nabla f|$ uniformly on any compact subset of $\mathbb{R}^d$;
    \item[(ii)] for any compact $A\subset\mathcal{M}(\mathbb{R}^d)$, $\sup_{\nu\in A}|\nu(f_n-f)|\to 0$,  $\sup_{\nu\in A}|\nu(\nabla f_n- \nabla f)|\to 0$.
\end{itemize}
Let $\mathscr{D}^1_{L^1}$ be the dual space of $C^1_b(\mathbb{R}^d)$. If $\kappa\in C^{(1,1)}(\mathbb{R}^d)$ is translation-invariant, then $\mathcal{H}_\kappa\hookrightarrow C^1_b(\mathbb{R}^d)$ (embeds into), thus for each $\Gamma\in \mathscr{D}^1_{L^1}$, there exists $\Phi_\Gamma\in\mathcal{H}_\kappa$ such that $\Gamma(f)=\langle f,\Phi_\Gamma\rangle_{\mathcal{H}_\kappa}$ for all $f\in\mathcal{H}_\kappa$. In fact, $\Phi_\Gamma(x)= \Gamma(\kappa_x) $. By \cite[Thm. 17]{simon2018kernel}, if $\kappa$ is further characteristic, then $\kappa$ is characteristic to $\mathscr{D}^1_{L^1}$, i.e., the map $\Phi:\mathscr{D}^1_{L^1}\to\mathcal{H}_\kappa$, $\Gamma\to\Phi_\Gamma$ is injective. Furthermore, let $[C^1_b(\mathbb{R}^d)]^m$ and $[\mathscr{D}^1_{L^1}]^m$ be the $m$-fold Cartesian products of $C^1_b(\mathbb{R}^d)$ and $\mathscr{D}^1_{L^1}$ respectively.

\begin{lemma}\label{P-ZeroDistribution}
$P\in\mathcal{P}_{p,\psi}\Longrightarrow P[\mathcal{T}_p(\vec{f}\circ\psi)]=0$ for all $\vec{f}\in [C^1_b(\mathbb{R}^{d'})]^m$.
\end{lemma}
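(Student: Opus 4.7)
The plan is to verify Stein's identity directly for the test field $\vec{h}:=\vec{f}\circ\psi$, following the divergence-based strategy of eq.~\eqref{Derivation-E}: rewrite $p\,\mathcal{T}_p\vec{h}$ as a sum of divergences and apply the generalized divergence theorem (Thm.~\ref{DivThm}) termwise. First I would record the pointwise bounds: each $h_l=f_l\circ\psi$ is $C^1$ on $M$ with $|h_l|\le\|f_l\|_\infty$, and the chain rule gives $D^l h_l = \langle(\nabla f_l)\circ\psi,\, d\psi(D^l)\rangle$, so $|D^l h_l|\le\|\nabla f_l\|_\infty\,|d\psi(D^l)|$. Since $p$ is locally Lipschitz, Rademacher's theorem guarantees that $p$ is differentiable a.e., and at any such point the identity from \eqref{Derivation-E} yields
\[
p(x)\,\mathcal{T}_p\vec{h}(x)=\sum_{l=1}^m \Div\bigl(h_l\,p\,D^l\bigr)(x).
\]

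The core step is to apply Thm.~\ref{DivThm} to each vector field $X_l:=h_l p D^l$. Local Lipschitzness of $X_l$ is immediate from the product structure. $\Omega$-integrability of $|X_l|$ follows because $\int_M |X_l|\,d\Omega\le\|f_l\|_\infty\, P(|D^l|)<\infty$, using $|D^l|\in L^1(P)$ from the definition of $\mathcal{P}_{p,\psi}$. For $\Omega$-integrability of $\Div X_l = p\,\mathcal{T}_p^l h_l$, the three summands of $\mathcal{T}_p^l h_l$ are dominated pointwise by $\|\nabla f_l\|_\infty|d\psi(D^l)|$, $\|f_l\|_\infty|D^l\log p|$, and $\|f_l\|_\infty|\Div D^l|$ respectively; the first two terms lie in $L^1(P)$ directly by $P\in\mathcal{P}_{p,\psi}$. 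Thm.~\ref{DivThm} then yields $\int_M\Div X_l\,d\Omega=0$ for each $l$, which when summed produces $P[\mathcal{T}_p(\vec{f}\circ\psi)]=\int p\,\mathcal{T}_p\vec{h}\,d\Omega=0$, as desired.

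The main technical obstacle is handling the third summand $h_l\Div D^l$, since $|\Div D^l|\in L^1(P)$ is not explicitly listed in the definition of $\mathcal{P}_{p,\psi}$. In the homogeneous-space setting of \S\ref{KSD-Homogeneous}, the issue vanishes because the $D^l$ are chosen as killing fields with $\Div D^l\equiv 0$; in the general Riemannian setting, one should either read this as an implicit companion hypothesis on the vector-field basis (natural since the analogous condition is needed in Thm.~\ref{Stein-identity}), or circumvent it with a truncation argument: first prove the identity for $\vec{f}$ with compact support in $\mathbb{R}^{d'}$, where each $\Div X_l$ is compactly supported in a neighbourhood of $\psi^{-1}(\mathrm{supp}\,\vec{f})$ and therefore trivially $\Omega$-integrable, and then extend to general $\vec{f}\in[C^1_b(\mathbb{R}^{d'})]^m$ via dominated convergence in the $C^1_b$-topology introduced just before the lemma.
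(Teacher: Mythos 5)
Your proposal mirrors the paper's own proof: rewrite $p\,\mathcal{T}_p(\vec f\circ\psi)=\sum_{l}\Div\bigl((f_l\circ\psi)\,p\,D^l\bigr)$, verify local Lipschitzness and the two $\Omega$-integrability hypotheses of Thm.~\ref{DivThm} term by term using the uniform bounds on $f_l$ and $\nabla f_l$ together with the integrability conditions packaged in $\mathcal{P}_{p,\psi}$, and conclude from the divergence theorem.

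You are in fact \emph{more} careful than the paper on one point, and it is worth flagging. The paper's displayed chain of equalities writes $\mathcal{T}_p(\vec f\circ\psi)$ as $(f_l\circ\psi)\,D^l\log p + D^l(f_l\circ\psi)$ and bounds only these two summands together with $|p\,(f_l\circ\psi)\,D^l|$; the term $(f_l\circ\psi)\Div D^l$ present in the general Stein operator \eqref{StOM} is silently dropped, and indeed $|\Div D^l|\in L^1(P)$ is not part of the definition of $\mathcal{P}_{p,\psi}$. Your reading --- that either the lemma is tacitly stated for the divergence-free killing-field bases of \S\ref{KSD-Homogeneous}, or else it carries an implicit companion hypothesis $|\Div D^l|\in L^1(P)$ --- is exactly right and matches how the lemma is used downstream, where all the worked examples take $D^l=K^l$ with $\Div K^l\equiv 0$.

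The one part of your write-up I would push back on is the truncation fallback. Proving the identity for compactly supported $\vec f$ is fine (Whitney embeddings can be chosen proper, so $\psi^{-1}(\mathrm{supp}\,\vec f)$ is compact and the local-Lipschitz control on $p$ makes $\Div X_l$ bounded there). But the extension step does not go through: without $|\Div D^l|\in L^1(P)$, the quantity $P[\mathcal{T}_p(\vec f\circ\psi)]$ for a merely bounded $\vec f$ need not be well-defined in the first place, so dominated convergence has no integrable majorant for the $(f_l\circ\psi)\Div D^l$ contribution. The truncation route can only show ``zero, if finite,'' not finiteness itself. So you should commit to one of your first two fixes rather than leave the truncation argument as a viable alternative.
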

\begin{proof} Given $\vec{f}\in [C^1_b(\mathbb{R}^{d'})]^m$, there exists $C>0$ such that $|f_l|\leq C$, $|\nabla f_l|\leq C $ for all $1\leq l\leq m$. Then $|p\cdot (f_l\circ \psi)\cdot D^l|\leq C p |D^l|$, $p|(f^l\circ\psi) D^l\log p|\leq Cp|D^l\log p|$, $p|D^l (f_l\circ\psi)|=p|[d\psi(D^l)]f_l|\leq p|d\psi(D^l)||\nabla f_l|\leq Cp|d\psi(D^l)|$ are all $\Omega$-integrable since 
$|D^l|$, $|D^l\log p|$, $|d\psi(D^l)|$ are all $P$-integrable. Then we have
\[
P[\mathcal{T}_p(\vec{f}\circ\psi)]=\int \sum_{l=1}^m [ (f_l\circ\psi) D^l\log p + D^l(f_l\circ\psi) ] p d\Omega = \int \sum_{l=1}^m\Div(p (f_l\circ\psi) D^l) d\Omega =0,
\]
by the generalized divergence theorem (Thm. \ref{DivThm}).
\end{proof}

\begin{lemma}\label{nu-zeroDistribution} For $\nu\in\mathcal{M}_{p,\psi}$, $\ksd(P,\nu)=0\Longrightarrow\nu[\mathcal{T}_p (\vec{f}\circ\psi)]=0$ for all $\vec{f}\in [C^1_b(\mathbb{R}^{d'})]^m$.
\end{lemma}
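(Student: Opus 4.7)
The plan is to reduce the statement to an application of the fact that a characteristic translation-invariant kernel $\kappa$ on $\mathbb{R}^{d'}$ is characteristic to $\mathscr{D}^1_{L^1}$, i.e., the mean embedding $\Gamma\mapsto\Phi_\Gamma$ with $\Phi_\Gamma(y):=\Gamma(\kappa_y)$ is injective from $\mathscr{D}^1_{L^1}$ into $\mathcal{H}_\kappa$ (this is Thm.~17 of \cite{simon2018kernel}, as recalled in the paragraph preceding Lemma~\ref{P-ZeroDistribution}). First I would split the hypothesis $\ksd(P,\nu)=0$ into coordinates by plugging $\vec{h}=(0,\dots,h,\dots,0)\in\mathcal{H}^m_{\tilde\kappa}$ into the defining supremum, deducing that $\nu(\mathcal{T}^l_p h)=0$ for every $h\in\mathcal{H}_{\tilde\kappa}$ and every $1\leq l\leq m$. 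It then suffices, for each $l$, to show that the functional defined on $C^1_b(\mathbb{R}^{d'})$ by $\Gamma^l(f):=\nu[\mathcal{T}^l_p(f\circ\psi)]$ vanishes identically; summing $\Gamma^l(f_l)$ over $l$ then gives the claim.

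For $f\in C^1_b(\mathbb{R}^{d'})$ the integrand in $\Gamma^l(f)$ expands as $(df)_{\psi(x)}(d\psi(D^l_x))+(f\circ\psi)(x)\,D^l\log p(x)+(f\circ\psi)(x)\,\Div D^l(x)$ and is bounded (up to $\|f\|_\infty$ and $\|\nabla f\|_\infty$) by $|d\psi(D^l)|+|D^l\log p|+|\Div D^l|$. The first two quantities are $|\nu|$-integrable since $\nu\in\mathcal{M}_{p,\psi}$, and integrability of $|\Div D^l|$ is implicit in the running Bochner-type hypothesis that $\sqrt{\tilde\kappa_p(x,x)}$ is $|\nu|$-integrable, so $\Gamma^l$ is a well-defined linear functional. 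Continuity in the $C^1_b$-topology follows by rewriting each piece as pairing against a finite signed Borel measure on $\mathbb{R}^{d'}$: the first term equals $\sum_{i}\omega^l_i(\partial_i f)$ with $\omega^l_i(B):=\int_{\psi^{-1}(B)}[d\psi(D^l)]_i\,d\nu$, and the second and third are $\mu^l_\pm(f)$ for the pushforwards $\psi_*[(D^l\log p)\,\nu]$ and $\psi_*[(\Div D^l)\,\nu]$. Standard dominated convergence against these finite measures places $\Gamma^l\in\mathscr{D}^1_{L^1}$.

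Next, for any $y\in\mathbb{R}^{d'}$ the restriction property of RKHS applied to $\tilde\kappa=\kappa\circ(\psi\times\psi)$ yields $\kappa_y\circ\psi\in\mathcal{H}_{\tilde\kappa}$, so $\Gamma^l(\kappa_y)=\nu(\mathcal{T}^l_p(\kappa_y\circ\psi))=0$ by the first step; equivalently $\Phi_{\Gamma^l}\equiv 0$. Injectivity of the mean embedding on $\mathscr{D}^1_{L^1}$ then forces $\Gamma^l\equiv 0$ on $C^1_b(\mathbb{R}^{d'})$, finishing the proof. The main obstacle is the continuity verification in the second step: the $C^1_b$-topology used by \cite{simon2018kernel} is strictly stronger than uniform convergence on compacta, and the argument requires translating manifold-side derivatives $D^l$ into Euclidean-side pairings through the pushforwards $\omega^l_i$, $\psi_*[(D^l\log p)\,\nu]$, $\psi_*[(\Div D^l)\,\nu]$, and confirming that the integrability built into $\mathcal{M}_{p,\psi}$ (supplemented with $|\nu|$-integrability of $|\Div D^l|$) suffices to make all of these finite measures on $\mathbb{R}^{d'}$.
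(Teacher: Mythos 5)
Your proposal follows the same route as the paper's proof: (i) deduce from $\ksd(P,\nu)=0$ that the functional vanishes on the RKHS, (ii) show the functional $f\mapsto\nu[\mathcal{T}_p(f\circ\psi)]$ lies in $\mathscr{D}^1_{L^1}$ by rewriting it as pairings against finite pushforward measures on $\mathbb{R}^{d'}$ and checking continuity in the $C^1_b$ topology, and (iii) invoke the $\mathscr{D}^1_{L^1}$-characteristic property of $\kappa$ (Thm.~17 of \cite{simon2018kernel}) to conclude the functional is zero. The only structural difference is that you split componentwise into $\Gamma^l$ while the paper works with the vector-valued $s_\nu\in[\mathscr{D}^1_{L^1}]^m$; these are the same functional and the argument is unchanged.

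One thing worth flagging: you explicitly retain the $\Div D^l$ term in the expansion of $\mathcal{T}^l_p(f\circ\psi)$, whereas the paper's proof silently drops it (both in Lemma~\ref{P-ZeroDistribution} and in the two pushforward measures $\nu^l_1,\nu^l_2$). You then assert that $|\nu|$-integrability of $|\Div D^l|$ is ``implicit in the running Bochner-type hypothesis that $\sqrt{\tilde\kappa_p(x,x)}$ is $|\nu|$-integrable,'' but this is not a hypothesis of Lemma~\ref{nu-zeroDistribution}: the lemma is stated for $\nu\in\mathcal{M}_{p,\psi}$ and $\mathcal{M}_{p,\psi}$ only requires $|\nu|$-integrability of $D^l\log p$, $|D^l|$, and $|d\psi(D^l)|$. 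Even if that Bochner condition were assumed, extracting $|\Div D^l|$ integrability from it would require the reverse-triangle manipulation you have not spelled out (and would still lean on $\tilde\kappa(x,x)$ being bounded below, which holds for translation-invariant kernels but is not stated). The clean way to resolve this is either to add $|\Div D^l|\,|\nu|$-integrability to the definition of $\mathcal{M}_{p,\psi}$ (and introduce a third pushforward measure $\psi_*\nu^l_3$ with density $|\Div D^l|$ in the continuity estimate), or to observe that in the intended setting of homogeneous spaces with killing fields one has $\Div D^l=0$ so the term vanishes. As written, both the paper and your argument leave this point unaccounted for; you are more forthright in naming the gap, but your proposed fix does not close it.
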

\begin{proof} By \cite[Prop. 7]{carmeli2010vector}, $\mathcal{H}_{\kappa_M}=\{f\circ\psi:f\in\mathcal{H}_\kappa \}$, thus $\ksd(P,\nu)=0\Longrightarrow\sup\{\nu(\mathcal{T}_p\vec{g}):\vec{g}\in\mathcal{H}^m_{\kappa_M}\}=0\Longrightarrow\sup\{\nu[\mathcal{T}_p(\vec{f}\circ\psi)]:\vec{f}\in\mathcal{H}^m_\kappa\}=0$, i.e., $\nu[\mathcal{T}_p(\vec{f}\circ\psi)]=0$ for all $\vec{f}\in\mathcal{H}^m_\kappa$. Let $|\nu|$ be the total variation of $\nu$. Since $|D^l\log p|$ and $|d\psi(D^l)|$ are all $\nu$-intergrable, we may define following two finite measures $\nu^l_1$ and $\nu^l_2$ as:
\[ \nu^l_1(A):=\int_A |D^l\log p| d|\nu|,\quad \nu^l_2(A):=\int_A |d\psi(D^l)| d|\nu|,\quad \text{for Borel } A\subset M.
\]
Note that $|\nu[\mathcal{T}_p(\vec{f}\circ\psi)]|\leq \sum_l \nu^l_1(|f_l|\circ\psi)+\sum_l \nu^l_2(|\nabla f_l|\circ \psi)\leq \sum_l \psi_*\nu^l_1(|f_l|)+\sum_l \psi_*\nu^l_2(|\nabla f_l|)$, where $\psi_*\nu^l_1$ and $\psi_*\nu^l_2$ are the pushforward measures of $\nu^l_1$ and $\nu^l_2$ by $\psi$.

Define the linear functional $s_\nu$ on $[C^1_b(\mathbb{R}^{d'})]^m$ as $s_\nu(\vec{f})= \nu[\mathcal{T}_p(\vec{f}\circ\psi)]$ for $\vec{f}\in [C^1_b(\mathbb{R}^{d'})]^m$. Given a converging sequence $\vec{f}_n\to \vec{f}$ in $[C^1_b(\mathbb{R}^{d'})]^m$  and using the definition of the topology on $[C^1_b(\mathbb{R}^{d'})]^m$, we have,
\[
|s_\nu(\vec{f}_n)-s_\nu(\vec{f})|\leq \sum_{l=1}^m \psi_*\nu^l_1(|f_{n,l}-f_l|)+\sum_{l=1}^m \psi_*\nu^l_2(|\nabla f_{n,l}-\nabla f_l|)\to 0,\quad \text{as } n\to\infty.
\]
Therefore, $s_\nu$ is continuous on $[C^1_b(\mathbb{R}^{d'})]^m$, thus $s_\nu\in[\mathscr{D}^1_{L^1}]^m$. By lemma \ref{P-ZeroDistribution} and the fact $\mathcal{H}^m_\kappa\subset [C^1_b(\mathbb{R}^{d'})]^m$, we have $s_P=0\in[\mathscr{D}^1_{L^1}]^m$ and $\Phi_{s_P}=0\in\mathcal{H}^m_\kappa$. The condition of the lemma also implies that $\Phi_{s_\nu}=0$. Since $\kappa$ is $\mathscr{D}^1_{L^1}$-characteristic, the embedding map $\Phi$ is injective, thus $\Phi_{s_\nu}=\Phi_{s_P}\Rightarrow s_\nu=s_P=0$.
\end{proof}

\subsubsection{Proof of theorem \ref{KSD-Characterization-Compact}}

\begin{proof}
We prove the forward direction $Q_n\Rightarrow P \Longrightarrow \ksd(P,Q_n)\to 0$ first. 

Since $p\in C^{s+1}$ and $M$ is compact, $\sqrt{\tilde{\kappa}(x,x)}|D^l|$ and $\tilde{\kappa}_p(x,y)$ are jointly continuous on $M\times M$, thus are bounded. As a result, the conditions in Thms. \ref{KSDFormThm} and \ref{Stein-identity} hold for all distributions on $M$, thus $\ksd(P,Q)=\iint \kappa_p(x,y) Q(dx) Q(dx)$ for all $Q\in\mathcal{P}(M)$ and $\iint \kappa_p(x,y) P(dx) P(dx)=0$. Moreover, we have $C_b(M)=C(M)$ as $M$ is compact. By Stone-Weiestrass theorem \cite[Thm. A.5.7]{steinwart2008support}, the space 
\[
C(M)\otimes C(M):=\left\{\sum_{k=1}^n f_k(x) g_k(y)\in C(M\times M) : f_k,g_k\in C(M), n\in\mathbb{N}_+\right\}
\]
is dense in $C(M\times M)$. As $Q_n(f)\to P(f)$ for all $f\in C(M)$ ($Q_n\Rightarrow P$), we have $(Q_n\times Q_n)(h)\to (P\times P)(h)$ for all $h\in C(M)\otimes C(M)$, which further implies $(Q_n\times Q_n)(h)\to (P\times P)(h)$ for all $h\in C(M\times M)$. Note that $\kappa_p(x,y)\in C(M\times M)$, thus 
\[
\ksd(P,Q_n)=\iint \kappa_p(x,y) Q(dx) Q(dy)\to \iint \kappa_p(x,y) P(dx) P(dy)=0.
\]

Next we prove the backward direction $\ksd(P,Q_n)\to 0\Longrightarrow Q_n\Rightarrow P$. 

Let $W^s_2(M)$ be the Sobolev space on $M$. If $\log p\in C^{s+1}(M)$, then for any $\eta\in C^\infty(M)\subset W^s_2(M)$ such that $P(\eta)=0$, their exists a $\zeta\in W^{s+2}_2(M)$ such that $\eta=\Delta \zeta+g(\nabla\log p,\nabla \zeta)=\frac{\Div(p\nabla \zeta)}{p}$ (see e.g., \cite{barp2018riemann}). The Sobolev embedding theorem states that $W^{s+2}_2(M)\subset C^2(M)$, thus $\zeta$ is $C^2$ and $\nabla \zeta$ is a $C^1$ vector field on $M$. Since the vector fields $D^l$, $1\leq l\leq D^l$ satisfies the assumption \ref{assumption-D}, there exists $h_1,\dots,h_m\in C^1(M)$ such that $\nabla \zeta=\sum_{l=1}^m h_l D^l $, thus $\eta=\frac{\Div(p\sum_{l=1} h_l D^l)}{p}=\mathcal{T}_p \vec{h}$. To sum up, for each $\bar{\eta}\in C^\infty(M)$ such that $P(\eta)=0$, there exists $\vec{h}\in [C^1(M)]^m$ such that $\mathcal{T}_p \vec{h}=\eta$.

The space $C^1_b(\mathbb{R}^{d'})$ restricted onto $M$ is apparently $C^1(M)=\{f\circ\psi: f\in C^1_b(\mathbb{R}^{d'})\}$. For $\nu\in\mathcal{M}_{p,\psi}=\mathcal{M}(M)$, by lemma $\ref{nu-zeroDistribution}$, $\ksd(P,\nu)=0\Longrightarrow \nu[\mathcal{T}_p(\vec{f}\circ\psi)]=0$ for all $\vec{f}\in[C^1_b(\mathbb{R}^{d'})]^m \Longrightarrow \nu[\mathcal{T}_p\vec{h}]=0 $ for all $\vec{h}\in [C^1(M)]^m\Longrightarrow \nu(\eta)=0$ for all $\eta\in C^\infty(M)$ s.t., $P(\eta)=0$. By Stone–Weierstrass theorem \cite[Thm. A.5.7]{steinwart2008support}, the space $\{\eta\in C^\infty(M):P(\eta)=0\}$ is dense in $P^\perp:=\{\eta\in C(M):P(\eta)=0\}$. Therefore, $\ksd(P,\nu)=0$ implies $\nu(\eta)=0$ for all $\nu\in P^\perp$, which further implies $\nu$ is proportional to $P$, as $\nu(\eta)=\nu[\eta-P(\eta)]+\nu[P(\eta)]=\nu(1)\cdot P(\eta)$ for all $\eta\in C(M)$.

Let $\mathcal{H}_{\kappa_p}:=\{ \mathcal{T}_p \vec{f}:\vec{f}\in \mathcal{H}^m_{\Tilde{\kappa}}\}$ be the range of the Stein pair, which is a subspace of $C(M)$. We just showed that, for any $\nu\in\mathcal{M}(M)$, if $\nu(\eta)=0$ for all $\eta\in\mathcal{H}_{\kappa_p}$, then $\nu$ is proportional $P$. A simple derivation by contradiction using the Hahn-Banach theorem yields that $\mathcal{H}_{\kappa_p}$ is dense in $P^\perp:=\{\eta\in C(M):P(\eta)=0\}$, as $\mathcal{M}(M)\simeq C(M)^*$ ($M$ is compact). Given a sequence of $Q_n\in\mathcal{P}(M)$, $\ksd(P,Q_n)\to 0$ implies that $Q_n(\eta)\to 0$ for all $\eta\in \mathcal{H}_{\kappa_p}$, which further implies $Q_n(\eta)\to 0$ for all $\eta\in P^\perp$. Therefore, for any $f\in C(M)$, $Q_n(f)=Q_n(f-P(f))+Q_n(P(f))=Q_n(f-P(f))+P(f)\to P(f)$ as $n\to \infty$, since $P(f)$ is a constant and $f-P(f)\in P^\perp$, thus $Q_n\Rightarrow P$.
\end{proof}

\subsubsection{Proof of theorem \ref{KSD-Characterization-Noncompact}}

\begin{proof}
It was proved in \cite[Thm. 3.4]{qu2022framework}, when $p>0$ is locally Lipshitcz continuous on $M$, the space $\{\Delta \zeta+g(\nabla\log p,\nabla \zeta): \zeta\in C^\infty_c(M)\}$ is dense in $L^2_0(P):=\{\eta\in L^2(P): P(\eta)=0\}$, the centered $L^2(P)$ space. For $\zeta\in C^\infty_c(M)$, $\nabla\zeta$ is a compactly supported smooth vector fields on $M$. Since $D^l$ satisfy assumption \ref{assumption-D}, there exists $\vec{h}\in [C^\infty_c(M)]^m$ such that $\nabla\zeta=\sum_{l=1}^m h_l D^l$, then $\Delta\zeta+g(\nabla\log p,\nabla\zeta)=\mathcal{T}_p \vec{h}$. Therefore, the space $\mathcal{H}^\infty_c :=\{\mathcal{T}_p \vec{h}:\vec{h}\in [C^\infty_c(M)]^m\}$ is dense in $L^2_0(P)$.

 Apparently, $C^\infty_c(M)\subset\{f\circ\psi:f\in C^1_b(\mathbb{R}^{d'})\}$. Given a $\nu\in \mathcal{M}_{p,\psi}\cap L^2(P)$, by lemma \ref{nu-zeroDistribution},  $\ksd(P,\nu)=0\Longrightarrow\nu[\mathcal{T}_p(\vec{f}\circ\psi)]=0$ for all $\vec{f}\in [C^1_b(\mathbb{R}^{d'})]^m\Longrightarrow \nu[\mathcal{T}_p\vec{h}]=0 $ for all $\vec{h}\in [C^\infty_c(M)]^m\Longrightarrow \nu(\eta)=0 $ for all $\eta\in\mathcal{H}^\infty_c\Longrightarrow P(\eta\cdot \frac{d\nu}{dP})=0$ for all $\eta\in \mathcal{H}^\infty_c\Longrightarrow P(\eta\cdot \frac{d\nu}{dP})=0$ for all $\eta\in L^2_0(P)\Longrightarrow \frac{d\nu}{dP}$ is a constant. 
 
For the first part of the conclusion of the theorem, $Q=P\Longrightarrow \ksd(P,Q)=0$ follows from the fact that $P\in\mathcal{P}_{p,\psi}$ and lemma \ref{P-ZeroDistribution}. For the reverse direction, we replace $\nu$ with $Q$ in above derivation, concluding that $\frac{dQ}{dP}\equiv1$, i.e., $Q=P$.

Next we prove the second part of the conclusion of the theorem. 

For the forward direction, we have $Q(\sqrt{\tilde{\kappa}(x,x)})=P(\sqrt{\tilde{\kappa}(x,x)}\cdot \frac{d Q_n}{d P})\leq P(\tilde{\kappa}(x,x))\cdot \Vert Q\Vert_P <+\infty$ for $Q\in L^2(P)$ by Hölder's inequality. Since translation-invariant kernels are bounded, $\tilde{\kappa}$ is also bounded, thus $\sqrt{\tilde{\kappa}(x,x)}|D^l|$ is integrable w.r.t. all $Q\in\mathcal{P}_{p,\psi}$. Therefore, the Conditions in Thm. \ref{KSDFormThm} and Thm. \ref{Stein-identity} hold for all $Q\in\mathcal{P}_{p,\psi}\cap L^2(P)$, thus we have $\ksd(P,Q)=\iint \kappa_p(x,y) Q(dx) Q(dx)$ for all $Q\in\mathcal{P}(M)$ and $\iint \kappa_p(x,y) P(dx) P(dx)=0$.

Let $Q_n\in\mathcal{P}_{p,\psi}\cap L^2(P)$ be a sequence of distributions such that $Q_n\Rightarrow P$. 
Given the condition $\sup_n\Vert Q_n\Vert_P<+\infty$ together with the fact that $C_b(M)$ is dense in $L^2(P)$, we conclude that  $Q_n(\eta)=P(\eta\cdot \frac{dQ_n}{dP})\to P(\eta)$ for all $\eta\in L^2(P)$. By a well-known result \cite[\S 5.5 Exer. 61]{folland1999real}, the space
\[
L^2(P)\otimes L^2(P)=\left\{ \sum_{k=1}^n f_k(x) g_k(y)\in L^2(P\times P): f_k,g_k\in L^2(P), n\in\mathbb{N}_+\right\}
\]
is dense in $L^2(P\times P)$. Since $(Q_n\times Q_n)(h)\to (P\times P)(h)$ for all $h\in L^2(P)\otimes L^2(P)$, thus $(Q_n\times Q_n)(h)\to (P\times P)(g)$ for all $h\in L^2(P\times P)$, as $\frac{dQ_n}{dP}(x)\cdot \frac{dQ_n}{d P}(y)\in L^2(P\times P)$. Note that $\kappa_p(x,y)\leq \sqrt{\kappa_p(x,x)}\sqrt{\cdot\kappa(y,y)}$ is in $L^2(P\times P)$, thus 
\[
\ksd(P,Q_n)=\iint \kappa_p(x,y) Q_n(dx) Q_n(dy)\to\iint \kappa_p(x,y) P(dx) P(dy) =0.
\]


For the reverse direction, Let $\mathcal{H}_{\kappa_p}:=\{ \mathcal{T}_p\vec{h}:h\in\mathcal{H}^m_{\Tilde{\kappa}} \}$ be the range of the Stein pair, which is a subspace of $ L^2_P$ since $P(\Tilde{\kappa}_p(x,x))<+\infty$. We have shown that $\nu(\eta)=0$ for all $\eta\in\mathcal{H}_{\kappa_p}\Longrightarrow \frac{d\nu}{dP}$ is a constant, which implies that $\mathcal{H}_{\kappa_p}$ is dense in $L^2_0(P)$. Therefore, $\ksd(P,Q_n)\to 0\Longrightarrow Q_n(\eta)=P(\eta\cdot\frac{dQ_n}{dP})\to 0=P(\eta)$ for all $\eta\in \mathcal{H}_{\kappa_p}$, which implies $Q_n(\eta)\to P(\eta)$ for all $\eta\in L^2_0(P)$, as $\mathcal{H}_{\kappa_p}$ is dense in $L^2_0(P)$ and $\sup_n\Vert \frac{dQ_n}{dP}\Vert_{L^2(P)}<+\infty$, which further implies that $Q_n(\eta)=Q_n(\eta-P(\eta))+Q_n(P(\eta))\to 0+P(\eta)=P(\eta)$ for all $\eta\in C_b(M)$. Therefore, $\ksd(P,Q_n)\Longrightarrow Q_n\Rightarrow P$.
\end{proof}

\subsection{Proof of theorem \ref{KSD-Characterization-weak}}

\begin{proof}
The forward direction is straightforward by Thm. \ref{Stein-identity}. It suffices to prove backward direction $Q=P\Longleftarrow\ksd(P,Q) $. Since $\kappa$ is $C_0$-universal, it is bounded by some constant $C>0$, so is $\Tilde{\kappa}$. Denote $\vec{D}\log(p/q):=(D^1\log(p/q),\dots,D^m\log(p/q))$, then we have
\[
\begin{aligned}
\sqrt{\kappa_q(x,x)}&=\Vert (\vec{\mathcal{T}}_q\kappa)_x\Vert^2=  \Vert (\vec{\mathcal{T}}_p\kappa)_x - \vec{D}\log(p/q)\cdot \kappa_x
\Vert^2 \leq \Vert (\vec{\mathcal{T}}_p\kappa)_x\Vert^2+\Vert \vec{D}\log(p/q)\cdot \kappa_x
\Vert^2\\
&=\sqrt{\kappa_p(x,x)}+\Vert \vec{D}\log(p/q)\Vert_{\mathbb{R}^m} \cdot\sqrt{\kappa(x,x)} \leq \sqrt{\kappa_p(x,x)}+\sqrt{C}\cdot \sum_{l=1}^m|D^l\log(p/q)|,
\end{aligned}
\]
which implies $\sqrt{\kappa_q(x,x)}$ is $Q$-integrable since $\sqrt{\kappa_p(x,x)}$ and $D^l\log(p/q)$ are $Q$-integrable, thus $Q(\mathcal{T}^l_q h)=0$ for all $h\in\mathcal{H}_{\Tilde{\kappa}}$ by Thm. \ref{Stein-identity}. The condition $\ksd(P,Q)=0$ further implies that $Q(\mathcal{T}^l_p h)=0$ for all $ h\in\mathcal{H}_{\Tilde{\kappa}}$, thus $Q(h\cdot D^l\log(p/q))=Q(\mathcal{T}^l_p h-\mathcal{T}^l_q h) =0$ for all $h\in\mathcal{H}_{\Tilde{\kappa}}$. Let $Q^w$ be the signed measure defined by $dQ^w:= D^l\log(p/q) dQ$, which is finite since $D^l\log(p/q)$ is $Q$-integrable. By \cite[Prop. 7]{carmeli2010vector}, $\mathcal{H}_{\Tilde{\kappa}}=\{f\circ\psi:f\in\mathcal{H}_\kappa\}$, thus $Q[(f\circ\psi)\cdot D^l\log(p/q)]=0$ for all $f\in\mathcal{H}_\kappa$, thus $\psi_*Q^w(f)=0$ for all $f\in\mathcal{H}_\kappa$, where $\psi_*Q^w$ is the pushforward measure of $Q^w$ by $\psi$. Since $\mathcal{H}_\kappa$ is dense in $C_0(\mathbb{R}^{d'})$, we have $\psi_*Q^w=0$ and further $D^l\log(p/q)=0$. Therefore, we conclude $p=q$ by assumption \ref{assumption-D} and connectedness of $M$.
\end{proof}

\subsection{Proof of Theorem \ref{sub-killing-field}}

\begin{proof}
It suffices to show that: for all $x\in H$ and $Y\in T_x H$, there exists $E\in T_e G$ s.t. its corresponding killing field $K$ satisfies $K_x=Y$. For each $x\in H$, let $\mathcal{C}_x$ be the stabilizer of $G$ at $x$, i.e., $\mathcal{C}_x:=\{g\in G:g.x=x\}$, which is a Lie subgroup of $G$. Since $T_e \mathcal{C}_x$ is a subspace of $T_e G$, we take a subspace $\mathscr{C}_x$ of $T_e G$ such that $T_e \mathcal{C}_x\oplus \mathscr{C}_x=T_e G $ and take a basis $D^j$, $1\leq j\leq \dim \mathscr{C}_x$, of $\mathscr{C}_x$. Note that this basis corresponds to a group of vector field $K^j$ on $H$, which are linearly independent at $x$. It is known that $H$ is diffeomorphic to $G/\mathcal{C}_x$ and $\dim H=\dim G-\dim \mathcal{C}_x $, thus $\dim H=\dim \mathscr{C}_x$. Thus $K^j_x$ span the entire $T_x H$.
\end{proof}

\subsection{Proof of theorems \ref{KSDasymptotic}, \ref{KSDasymptotic-stronger}, \ref{MKSDEconsistency}, \ref{MKSDECLT} and \ref{GoF}}

For Thm. \ref{KSDasymptotic}, see the proof of \cite[Thm. 4.5]{qu2023kernel}. 

For Thm. \ref{KSDasymptotic-stronger}, see \cite[\S B]{qu2023kernel}.

For Thm. \ref{MKSDEconsistency}, see \cite[\S C]{qu2023kernel}.

For Thm. \ref{MKSDECLT}, see the proof of \cite[Thm. 4.8]{qu2023kernel}.

For Thm. \ref{GoF}, see \cite[\S D]{qu2023kernel}.

\subsection{Proof of theorem \ref{MKSDE-Form-Thm}}

\begin{proof} Note that 
\[
Q(x,y)^\top=\kappa(x,y) \sum_{l=1}^m (K^l_x\zeta \cdot K^l_y\zeta^\top)^\top = \kappa(y,x)\sum_{l=1}^m K^l_y\zeta \cdot K^l_x\zeta^\top = Q(y,x),
\]
thus
\[
Q_u^\top = \sum_{i\neq j}\frac{Q(x_i,x_j)^\top}{n(n-1)}\frac{q(x_i)q(x_j)}{w(x_i)w(x_j)}=\sum_{j\neq i}\frac{Q(x_j,x_i)}{n(n-1)}\frac{q(x_j)q(x_i)}{w(x_j)w(x_i)} = Q_u,
\]
and $Q_v$ is also symmetric by a parallel argument. Next we show $Q_v$ is positive semi-definite.
Given arbitrary fixed vector $\theta_0\in\mathbb{R}^s$, we let $\phi^l_x = \theta_0^\top \cdot K^l_x\zeta$, which is a real-valued function on $H$. We let $\mathbf{1}_n:=(1,\dots,1)^\top\in\mathbb{R}^n$, $\Vec{\phi^l}:=(\phi^l_{x_1},\dots,\phi^l_{x_n})^\top\in\mathbb{R}^n$ and further set
\[
 \Phi=\sum_{l=1}^m \Vec{\phi^l}\Vec{\phi^l}^\top\in\mathbb{R}^{n\times n},\quad G_n:= \left(\kappa(x_i,x_j)\frac{q(x_i)q(x_j)}{w(x_i)w(x_j)}\right)_{i j }\in\mathbb{R}^{n\times n}.
\]
Note that $\Phi$ and $G_n$ are both positive semi-definite, thus by Schur product theorem, their Hadamard product (elementwise product) $G_n\circ \Phi$ is also positive semi-definite. Furthermore, since $\theta^\top  Q(x,y)\theta= \kappa(x,y)\sum_{l=1}^m \phi^l_x\cdot \phi^l_y$, we have
\[
\theta^\top  Q_v\theta= 
n^{-2}\mathbf{1}_n^\top (G_n\circ \Phi) \mathbf{1}_n \geq 0,
\]
which implies $Q_v$ is positive semi-definite by the arbitrariness of $\theta$. The rest of the theorem is straightforward.    
\end{proof}

\end{document}